\newtheorem{thm}{Theorem}[section]
\newtheorem{lem}[thm]{Lemma}
\newtheorem{prop}[thm]{Proposition}
\theoremstyle{definition}
\newtheorem{defn}[thm]{Definition}
\newtheorem{nota}[thm]{Notation}
\theoremstyle{remark}
\newtheorem{rem}[thm]{Remark}
\newcommand{\N}{\mathbb{N}}
\newcommand{\Z}{\mathbb{Z}}
\newcommand{\R}{\mathbb{R}}
\newcommand{\C}{\mathbb{C}}
\newcommand{\AAz}{\mathbf{A}}
\newcommand{\AAA}{\mathbf{A}_\nu}
\newcommand{\AAT}{\mathbf{A}_{\nu,\ang}}
\newcommand{\curl}{\mathrm{curl}}
\newcommand{\ang}{\tau}
\newcommand{\EH}{\widetilde{\mathscr{E}}}
\newcommand{\T}{\tan\nu}
\newcommand{\TT}{\tan^2\nu}
\newcommand{\f}{\varphi}
\newcommand{\fmine}{\underline{\tilde\f}}
\newcommand{\sH}{\widetilde{\mathcal{S}}}
\newcommand{\THfder}{\tilde{\f}}
\newcommand{\Lv}{L(\nu)}
\newcommand{\LLv}{\mathcal{L}(\nu)}
\newcommand{\LLvpTT}{\mathcal{L}^{\mathsf{per}}_\ang(\nu,\theta)}
\newcommand{\QQvpT}{\mathcal{Q}^{\mathsf{per}}_{\nu,\theta,\ang}}
\DeclareMathOperator{\supp}{\mathrm{supp}}
\newcommand{\LLvpDLT}{\mathcal{L}^{\mathsf{per},\mathsf{Dir}}_{\ang,L}(\nu,\theta)}
\newcommand{\LLvpDzT}{\mathcal{L}^{\mathsf{per},\mathsf{Dir}}_{\ang,0}(\nu,\theta)}
\newcommand{\QQvpDzT}{\mathcal{Q}^{\mathsf{per},\mathsf{Dir}}_{\nu,\theta,\ang}}
\newcommand{\cv}{\cos\nu}
\newcommand{\sv}{\sin\nu}
\newcommand{\zv}{\zeta(\nu)}
\newcommand{\spe}{\sigma}
\newcommand{\phid}{\phi^{2\Dim}_\nu}
\newcommand{\phidd}{\phi^{3\Dim}_\nu}
\newcommand{\phiddt}{\phi^{3\Dim}_{\nu,\ang}}
\newcommand{\Rh}{\mathbb{R}^2_+}
\newcommand{\RRh}{\mathbb{R}^3_+}
\newcommand{\latti}{w}
\newcommand{\LLat}{\Lambda}
\newcommand{\Lat}{\mathcal{D}}
\newcommand{\Eper}{E^{\mathsf{per}}}
\newcommand{\psivT}{\psi_{\nu,\theta,\ang}^{R,R'}}
\newcommand{\psiN}{\Psi_n}
\newcommand{\psivTt}{\tilde{\psi}_{\nu,\theta,\ang}^{R,R'}}
\newcommand{\dpo}{:}
\newcommand{\bfr}{\mathfrak{b}}
\newcommand{\fou}{f}
\newcommand{\Cfon}{\mathcal{C}^0}
\newcommand{\Cfonbis}{\mathcal{C}^0}
\newcommand{\DCper}{\Lat_{R,R',\theta}}
\newcommand{\HCper}{H^1_\ang(\nu,R,R',\theta)}
\newcommand{\ZCper}{\zeta_\ang(\nu,R,R',\theta)}
\newcommand{\Dim}{D}
\newcommand{\qe}{-}
\newcommand{\qeb}{+}
\newcommand{\NAAT}{(-i\nabla+\AAT)}
\newcommand{\q}{\quad}
\newcommand{\qq}{\qquad}
\newcommand{\eq}{\begin{equation}}
\newcommand{\eeq}{\end{equation}}
\newcommand{\GL}{GL }
\newcommand{\nn}{j}
\newcommand{\mm}{k}
\def\p{\partial}
\title[Surface Superconductivity]{Concentration Behavior and Lattice Structure of Surface Superconductivity}
\author{S\o ren Fournais}
\author{Jean-Philippe Miqueu}
\author{Xing-Bin Pan}
\address[Fournais and Miqueu]{Aarhus University, Department of
  Mathematics, Ny Munkegade 118, 8000 Aarhus C, Denmark}
\email{fournais@math.au.dk}
\email{jean-philippe.miqueu@math.au.dk}
\address[Xingbin Pan]{Department of Mathematics, East China Normal University, and\\ NYU-ECNU Institute of Mathematical Sciences at NYU Shanghai\\ Shanghai 200062, People's Republic of China}
\email{xbpan@math.ecnu.edu.cn}
\begin{document}
\renewcommand\thepart{\Roman{part}}
\renewcommand*\partname{}

\renewcommand{\abstractname}{Abstract}

\begin{abstract}
We study the three-dimensional Ginzburg-Landau model of superconductivity for strong applied magnetic fields varying between the second and third critical fields. In this regime, it is known from physics that superconductivity should be essentially restricted to a thin layer along the boundary of the sample.
This leads to the introduction of a Ginzburg-Landau model on a half-space.
We prove that the non-linear Ginzburg-Landau energy on the half-space with constant magnetic field is a decreasing function of the angle $\nu$ that the magnetic field makes with the boundary. In the case when the magnetic field is tangent to the boundary ($\nu=0$), we show that the energy is determined to leading order by the minimization of a simplified 1D functional in the direction perpendicular to the boundary. We also study the geometric behavior of the order parameter near the surface of the sample by constructing  formal solutions with lattice properties.
\end{abstract}

\maketitle

\section{Introduction}

\subsection{The Ginzburg-Landau model}
The Ginzburg-Landau (GL) theory of superconductivity was first introduced in the '50s as a
phenomenological macroscopic model \cite{GL}. It successfully describes the behavior of a superconductor subject to an external magnetic field and was later justified by Gor'kov \cite{Gorkov} %
as emerging from the
microscopic Bardeen-Cooper-Schrieffer (BCS) theory. 
This has recently been proved rigorously \cite{FHSS}. 
It has been widely used in the physics literature, 
for instance for successfully predicting the response of superconducting materials to an external magnetic field. Also in the celebrated work of Abrikosov \cite{Abri}, this theory predicted the existence of type II superconductors - in particular, of vortex lattices - before they had been experimentally realized, see \cite{DG66} for a review of this topic for which A. Abrikosov was awarded the Nobel Prize, and the first discussion by Saint-James and de Gennes 
of the surface superconductivity phenomenon that is the subject of this paper. 

In the GL theory, the superconducting state of a sample is described by a
complex-valued wave function $\psi:\R^3\rightarrow \mathbb{C}$ (the order parameter) and a vector field (magnetic potential) $\AAz: \R^3 \rightarrow \R^3$ such that the pair
$(\psi,\AAz)$ is a critical point of a specific energy functional. The interpretation of
$\psi$ and $\AAz$ is explained by the BCS theory as follows:
$|\psi|^2$ is proportional to the density of superconducting particles (the so-called Cooper pairs)
and $\kappa H \curl \AAz$ measures the induced magnetic field inside the sample, with $\kappa>0$ a physical characteristic of the material, and $H$ measuring the intensity of the external magnetic field, that we assume to be constant throughout the sample.
We shall be concerned with type-II superconductors, characterized by $\kappa>\frac{1}{\sqrt{2}}$, and more precisely with the limit $\kappa\rightarrow \infty$ (extreme type-II).

The modulus of the order parameter $|\psi|$ varies between $0$ and $1$: the vanishing of $\psi$ in a certain region or point implies a loss of superconductivity there, due to the absence of Cooper pairs, whereas if $|\psi|=1$ somewhere all the electrons are arranged in Cooper pairs and thus superconducting. The cases $|\psi|=1$ and $|\psi|=0$ everywhere in $\Omega$ correspond to the so-called
perfectly superconducting and normal states, known to be preferred for small and large applied field respectively. When $|\psi|$ is not identically $0$ nor $1$, for intermediate values of the applied field, one says that the system is in a mixed state.

The behavior of a type-II superconductor is distinguished by three critical values of the intensity
of the applied magnetic field which we denote by $H_{C_1}$, $H_{C_2}$ and $H_{C_3}$. These critical fields may be described in terms of the wave function $\psi$ as follows. 
When the external magnetic field strength $H$ satisfies $H < H_{C_1}$, the material is in the superconducting phase, which corresponds to $|\psi|>0$ everywhere. The sample stays in the superconducting state until the first critical field is reached. When $H_{C_1} < H < H_{C_2}$, the magnetic field penetrates the sample in quantized vortices. 
These vortices correspond to isolated zeros of $\psi$; 
their number increases with the increase of the strength of the external field $\kappa H$. In the $2$D case, they arrange themselves on a triangular lattice, the famous Abrikosov lattice, that survives until a second critical value of the field is reached. When $H_{C_2} < H < H_{C_3}$ superconductivity is confined to (part of) the surface of the sample corresponding to $|\psi|$ very small in the bulk.
More precisely the \GL order parameter is exponentially decaying far from the boundary. This is the surface superconductivity regime. Finally, when $H > H_{C_3}$, superconductivity is lost, which is reflected by $\psi= 0$ everywhere, and the normal state becomes the global minimizer of the \GL energy.

In the last decades, much progress has been made towards establishing the aforementioned behavior of type-II superconductors by studying minimizers of the \GL energy. The monograph \cite{SS07} and references therein contains an analysis of vortices and the critical field $H_{C_1}$. Concerning the analysis of the critical fields $H_{C_2}$ and $H_{C_3}$ we mention \cite{FK13, FH} (and references therein). As one can see in \cite{FH, SS07}, the \GL model has a rich mathematical structure whose analysis requires a diversity of techniques, many of which have been developed especially for the study of the model. While a detailed study of the \GL model in a two dimensional domain has been the subject of numerous papers, the study of the model in a three dimensional domain is much less developed.

\subsection{Objective of the paper}

Our study is motivated by the mathematical theory of the surface superconductivity of $3$D samples. 
It is well understood  \cite{P02, P04, FKP13} that in a suitable range of magnetic field strengths the solutions to the \GL equations are localized near the boundary. We want to improve the understanding of this boundary layer.
For this purpose, we examine the energy contribution of the order parameter in the vicinity of the domain boundary.
We believe that in the surface superconducting state, the order parameters will exhibit a certain lattice structure in the vicinity of the domain boundary similar to the Abrikosov lattices of $2$D samples. Understanding the lattice structure near the boundary will help us to understand the vortex lattices of superconductivity when the applied magnetic field decreases and approaches the second critical field $H_{c_2}$.

After rescaling and taking limits, the behavior of the solutions to the \GL equations in a boundary layer can be understood from the  limiting equations in $\R^3_+$. So we consider only the problem in $\R^3_+$.

\begin{rem}[The spectral quantity $\Theta_0$]\label{QTheta}$\,$\\
The important constant $\Theta_0$ has already been mentioned in \S 1.1 and will appear in the statements. Consider the harmonic oscillator $H(\xi)$ defined for all $\xi\in\mathbb{R}$ on the half-axis $\mathbb{R}_+$ as follows\dpo
\begin{equation}\label{DeGennes}
H(\xi)=-\frac{d^2}{dt^2}+(t-\xi)^2 \quad \text{in} \ L^2(\mathbb{R}_+),
\end{equation}
with Neumann boundary condition $u'(0)=0$. This operator has compact resolvent and it follows from Sturm-Liouville theory that its eigenvalues are simple. Let $\mu_1(\xi)$ denote the first eigenvalue of $H(\xi)$. The constant $\Theta_0$ is defined as\dpo
\begin{equation}\label{eq-th0}
\Theta_0=\underset{\xi\in\mathbb{R}}{\inf}\ \mu_1(\xi),
\end{equation}
see for instance \cite{LP00b} and also \cite{HM01} 
for the mathematical analysis of $\Theta_0$.
\end{rem}

\subsection{The $3$D surface energy}\label{Sec:Ini}
In this paper, for all $m\in\{1,2,3\}$, we denote by $x_j$ ($j\in \{1, \cdots, m\}$) the coordinates of $x\in \mathbb{R}^m$. We define $\R^m_+=\{x\in\R^m: x_1>0\}$ and $\partial\R^m_+=\{x\in \R^m: x_1=0\}$ the boundary of $\R^m_+$.

Let $\nu\in[0,\frac{\pi}{2}]$, and $\ell>0$. We introduce the set\dpo
\begin{equation}\label{Dinitial}
\mathcal{D}_\ell=(0,\infty)\times(-\ell,\ell)\times(-\ell,\ell),
\end{equation}
and the magnetic potential $\AAA$ defined on $\mathbb{R}^3_+$ by
\begin{equation}\label{Av}
\mathbf{A}=\mathbf{A}_\nu=\left(\begin{array}{c} 0\\ 0 \\ -x_1 \cv+x_2 \sv\end{array}\right),
\end{equation}
for which the associated magnetic field is the constant unit vector that makes an angle $\nu$ with the $x_2x_3$ plane\dpo
\begin{equation}\label{Bv}
\mathbf{B}=\mathbf{B}_\nu= \nabla \times \mathbf{A}_\nu=\left(\begin{array}{c} \sv\\ \cv \\ 0\end{array}\right).
\end{equation}

\begin{rem}
By the standard gauge invariance arguments our energy (in particular, the energy functional $\mathscr{E}_{\bfr,\nu,\ell}$ defined below) depends on the magnetic field $\mathbf{B}_\nu$ but not on the specific choice of vector potential $\mathbf{A}_\nu$ with $\nabla \times \mathbf{A}_\nu = \mathbf{B}_\nu$. We only fix this choice for concreteness.
\end{rem}

\begin{defn}\label{def:EnergyInitial}
We consider the following reduced \GL type energy functional\dpo
\begin{equation}\label{GL}
\mathscr{E}_{\bfr,\nu,\ell}(\f)=\displaystyle{\int_{\mathcal{D}_{\ell}}\left(|(-i\nabla+\mathbf{A}_\nu)\f|^2-\bfr|\f|^2+\frac{\bfr}{2}|\f|^4\right)\,\mathrm{d}x},
\end{equation}
for $\f$ in the space\dpo
\begin{equation}\label{spaceSInitial}
\mathcal{S}_{\ell}=\left\{\f \in L^2(\mathcal{D}_{\ell})~:~ (-i\nabla+\mathbf{A}_\nu)\f\in L^2(\mathcal{D}_{\ell},\mathbb{C}^3),\;\; \f=0 \;\; \text{on} \;\; \partial\mathcal{D}_{\ell}\backslash \{x_1=0\}\right\}.
\end{equation}
Furthermore, we define\dpo
\begin{align}\label{eq:E}
E(\bfr,\nu,\ell)= \inf_{\f \in \mathcal{S}_{\ell}} \mathscr{E}_{\bfr,\nu,\ell}(\f),
\end{align}
and, for those values of $\bfr$ where the limit exists,
\begin{align}\label{eq:EnergyPerArea}
e(\bfr,\nu)= \lim_{\ell\rightarrow \infty} \frac{1}{4\ell^2} E(\bfr,\nu,\ell).
\end{align}
\end{defn}

\begin{rem}\label{rem:existenceInitial}
The existence of the limit \eqref{eq:EnergyPerArea}
was proved under the restriction $\bfr \in [\Theta_0,1]$ in \cite{FKP13}.
More precisely, it has been proved that (see \cite[Theorem 3.13]{FKP13}) for all $\nu\in\left[0,\frac{\pi}{2}\right]$, the function $\bfr\mapsto e(\bfr,\nu)$ is continuous and monotone decreasing, and that for all $\bfr\in\left[\Theta_0,\nu\right]$, the function $\nu\mapsto e(\bfr,\nu)$ is continuous.
For $\bfr\le \Theta_0$ it is clear that $e(\bfr,\nu)=0$ 
and for $\bfr>1$ there is no boundary concentration so the limit in \eqref{eq:EnergyPerArea} does not exist. It is not stated explicitly in \cite{FKP13} but one can easily show that for all $\nu\in\left[0,\frac{\pi}{2}\right]$, the function $\bfr\mapsto e(\bfr,\nu)$ is also concave (as an infimum and a limit over affine functions).
\end{rem}

\subsection{Statement of the results}\label{main_results}
For $\nu=0$, we have a complete understanding of the limit $e(\bfr,0)$ in \eqref{eq:EnergyPerArea}.
For the similar problem in $2$D it was proposed in \cite{P02} and proved in \cite{CR14} that a non-linear Ansatz with separation of variables is correct for the ground state. We will see that the $3$D case is completely analogous (with essentially the same proof).

\begin{thm}\label{thm:Uniqueness}
For $\nu=0$ and $\bfr\in (\Theta_0,1]$
we have
\begin{equation}\label{eq:1D}
e(\bfr,\nu=0) = E_0^{1\Dim},
\end{equation}
where $E_0^{1\Dim}$ is defined by
\begin{equation}\label{E1D}
E_0^{1\Dim}=\inf_{\xi \in \R}\left(\inf_{f\in H^1(\R_{+})}
\mathscr{E}_{\bfr,\xi}^{1\Dim}(f)\right),
\end{equation}
and where
\begin{equation}\label{eq:1DGL_intro}
\mathscr{E}_{\bfr,\xi}^{1\Dim}(f):=\int_0^{\infty} \Big\{
|f'(t)|^2 + (t-\xi)^2 |f(t)|^2 -\bfr |f(t)|^2 + \frac{\bfr}{2} |f(t)|^4 \,\Big\} \mathrm{d}t.
\end{equation}
\end{thm}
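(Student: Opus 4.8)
The plan is to prove the two matching inequalities $e(\bfr,0)\le E_0^{1\Dim}$ and $e(\bfr,0)\ge E_0^{1\Dim}$; since, by Remark~\ref{rem:existenceInitial}, the limit in \eqref{eq:EnergyPerArea} is already known to exist for $\bfr\in[\Theta_0,1]$, it suffices to control $\tfrac{1}{4\ell^2}E(\bfr,0,\ell)$ from above and below as $\ell\to\infty$. The structural fact I would exploit is that for $\nu=0$ the potential is $\AAA=(0,0,-x_1)$, so that on $\mathcal{D}_\ell$ one has the orthogonal splitting
\[
 |(-i\nabla+\AAA)\f|^2 \;=\; |\partial_1\f|^2+|(-i\partial_3-x_1)\f|^2 \;+\; |\partial_2\f|^2 :
\]
the variable $x_2$ decouples, and the $(x_1,x_3)$-block is exactly the magnetic kinetic energy of the two-dimensional surface problem on the half-strip $(0,\infty)\times(-\ell,\ell)$ with a unit magnetic field normal to the plane.

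For the \emph{upper bound} I would use the one-dimensional Ansatz. Let $(\xi_\ast,f_\ast)$ realize $E_0^{1\Dim}$, so that $f_\ast$ solves $-f_\ast''+(t-\xi_\ast)^2f_\ast-\bfr f_\ast+\bfr f_\ast^3=0$ on $\R_+$ with $f_\ast'(0)=0$; existence of such a pair is standard (the function $\xi\mapsto\inf_f\mathscr{E}_{\bfr,\xi}^{1\Dim}(f)$ is continuous, tends to $0$ as $|\xi|\to\infty$ and is negative near the minimum of $\mu_1$, which equals $\Theta_0<\bfr$, while $f_\ast$ decays super-exponentially since the potential $(t-\xi_\ast)^2$ diverges). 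Picking a cut-off $\chi_\ell\in C^\infty_c((-\ell,\ell))$ with $0\le\chi_\ell\le1$, $\chi_\ell\equiv1$ on $(-\ell+1,\ell-1)$ and $|\chi_\ell'|\le C$, I would take $\f_\ell(x)=e^{i\xi_\ast x_3}f_\ast(x_1)\chi_\ell(x_2)\chi_\ell(x_3)\in\mathcal{S}_\ell$. Using the splitting above, a direct computation gives $\mathscr{E}_{\bfr,0,\ell}(\f_\ell)=4\ell^2E_0^{1\Dim}+\mathcal{O}(\ell)$, the error collecting the terms carrying $\chi_\ell'$ (supported in two strips of unit width, finite in $x_1$ by the decay of $f_\ast$) and the discrepancy between $(\int\chi_\ell^2)^2$, $(\int\chi_\ell^4)^2$ and $4\ell^2$. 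Dividing by $4\ell^2$ and letting $\ell\to\infty$ gives $e(\bfr,0)\le E_0^{1\Dim}$.

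The \emph{lower bound} is where the work is. The cleanest route I see reduces it to the two-dimensional result of \cite{CR14} (the resolution of the conjecture of \cite{P02}): given $\f\in\mathcal{S}_\ell$, Fubini shows that for a.e.\ $x_2\in(-\ell,\ell)$ the slice $\f(\cdot,x_2,\cdot)$ lies in the form domain of the two-dimensional half-strip functional and vanishes on $\{x_3=\pm\ell\}$, so its $(x_1,x_3)$-energy is $\ge E^{2\Dim}(\bfr,\ell)$, the corresponding ground-state energy. Integrating over $x_2$ and discarding $\int|\partial_2\f|^2\ge0$ yields $E(\bfr,0,\ell)\ge 2\ell\,E^{2\Dim}(\bfr,\ell)$, hence
\[
 \frac{1}{4\ell^2}E(\bfr,0,\ell)\ \ge\ \frac{1}{2\ell}E^{2\Dim}(\bfr,\ell)\ \longrightarrow\ E_0^{1\Dim}\qquad(\ell\to\infty),
\]
the limit existing and equalling $E_0^{1\Dim}$ by the two-dimensional analysis of \cite{CR14}. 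With the upper bound this gives \eqref{eq:1D}. (For the endpoint $\bfr=1$, if needed, one passes to the limit $\bfr\uparrow1$ using continuity in $\bfr$ of $e(\cdot,0)$ and of $E_0^{1\Dim}$.) As the excerpt suggests, one can instead reprove \eqref{eq:1D} directly in $\RRh$ following \cite{CR14} with $x_2$ inert: one first establishes a priori bounds on a minimizer $\f_\ell$ of $E(\bfr,0,\ell)$ ($\|\f_\ell\|_\infty\le1$ by the maximum principle; exponential decay of $\f_\ell$ and $(-i\nabla+\AAA)\f_\ell$ in $x_1$ by an Agmon argument when $\bfr<1$), and then performs the ground-state substitution $\f_\ell=e^{i\xi_\ast x_3}f_\ast(x_1)v$; inserting the Euler--Lagrange equation for $f_\ast$ and the virial identity $E_0^{1\Dim}=-\tfrac{\bfr}{2}\int_0^\infty f_\ast^4$ turns $\mathscr{E}_{\bfr,0,\ell}(\f_\ell)-4\ell^2E_0^{1\Dim}$ into $\tfrac{\bfr}{2}\int_{\mathcal{D}_\ell}f_\ast^4(|v|^2-1)^2\ge0$ plus a remainder $M$ that is quadratic in $v$ and its gradient.

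The step I expect to be the main obstacle is exactly this lower bound, in either formulation. The quadratic part of the energy is diagonalized by the Fourier transform in $x_3$, but this only produces $\int(\mu_1(\xi)-\bfr)|\widehat\f|^2\,d\xi\ge(\Theta_0-\bfr)\|\f\|^2$, which is useless by itself since $\bfr>\Theta_0$; the quartic term does not respect this decomposition, and correspondingly the remainder $M$ in the substitution above is not manifestly nonnegative — it contains a term $-\int_{\mathcal{D}_\ell}f_\ast^2(\xi_\ast-x_1)^2|v|^2$. Proving that $M$ is bounded below by $o(\ell^2)$ — equivalently, that no admissible configuration can beat the one-dimensional profile — is where the a priori control on $\f_\ell$ enters, and this is precisely the difficulty handled in \cite{CR14}; in the present $3$D problem it carries over verbatim, since $x_2$ contributes only the harmless nonnegative term $\int|\partial_2\f|^2$.
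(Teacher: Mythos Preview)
Your slicing argument (route A) for the lower bound is correct and is genuinely simpler than what the paper does: freezing $x_2$, dropping $\int|\partial_2\f|^2\ge0$, and invoking the 2D result of \cite{CR14} for each slice gives $E(\bfr,0,\ell)\ge 2\ell\,E^{2\Dim}(\bfr,\ell)$ and hence the desired limit. The paper instead reproves the \cite{CR14} argument directly in $\RRh$ via the ground-state substitution; your route black-boxes that, which is perfectly legitimate and shorter.

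However, your description of the direct route (route B), which \emph{is} what the paper does, is off on two points. First, the remainder $M$ after the substitution $\f=f_0(x_1)e^{i\xi_0 x_3}v$ does \emph{not} contain a term $-\int f_0^2(\xi_0-x_1)^2|v|^2$: that term cancels exactly against the corresponding piece of $\int|f_0'|^2|v|^2$ once you integrate by parts in $x_1$ and use the Euler--Lagrange equation for $f_0$. What actually remains is
\[
M\;=\;\int_{\mathcal{D}_\ell} f_0^2\Big\{|\nabla v|^2-2(x_1-\xi_0)\,\mathbf{e}_3\cdot\mathbf{j}(v)\Big\}\,\mathrm{d}x,
\]
so the only obstruction to positivity is the super-current cross term, not a negative potential. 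Second, the paper (following \cite{CR14}) does not merely prove $M\ge o(\ell^2)$ using a priori bounds on a minimizer; it proves $M\ge0$ for \emph{every} admissible $\f$. The mechanism is: integrate the cross term by parts in $x_1$ to get $\int F_0(x_1)\,\mu(v)$ with $F_0(x_1)=2\int_0^{x_1}(y-\xi_0)f_0^2(y)\,\mathrm{d}y\le0$ and $\mu(v)=\curl\,\mathbf{j}(v)\cdot\mathbf{e}_2$, then use $|\mu(v)|\le|\nabla v|^2$ to obtain $M\ge\int(f_0^2+F_0)|\nabla v|^2$. The nontrivial input from \cite{CR14} is the pointwise positivity of the cost function $K_0=f_0^2+F_0$ on $\R_+$ (and $F_0\le0$), not any decay estimate on $\f_\ell$. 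So no maximum-principle or Agmon arguments are needed for the lower bound.
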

We note that the infimum is both taken with respect to the function $f$ and the real number $\xi$. Minimizing the $1$D-functional \eqref{eq:1DGL_intro} with respect to $f$, we obtain an energy $E_{\bfr,\xi}^{1\Dim}$ and a minimizer $f_{\bfr,\xi}=f_{\xi}$. Then, minimizing $E_{\bfr,\xi}^{1\Dim}$ with respect to $\xi$ gives a minimal energy $E_{\bfr,0}^{1\Dim}=E_{0}^{1\Dim}$ and a minimizer $\xi_0$.
The proof of Theorem~\ref{thm:Uniqueness} is similar to the $2$D-case and is given in Section~\ref{Sec:Parallel}.

\begin{thm}\label{thm:mon}
For all $\bfr\in(\Theta_0,1]$, the function $[0,\frac{\pi}{2}]\ni\nu\mapsto e(\bfr,\nu)$ is monotone non-decreasing.
\end{thm}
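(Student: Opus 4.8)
The plan is to establish, for any $0\le\nu_1<\nu_2\le\tfrac{\pi}{2}$, the inequality $e(\bfr,\nu_1)\le e(\bfr,\nu_2)$ by comparing the finite‑box functionals and passing to the limit. It is enough to show that to every competitor $u\in\mathcal{S}_{\ell'}$ of the $\nu_2$‑problem one can associate a competitor $v\in\mathcal{S}_{L}$ of the $\nu_1$‑problem, with $L$ of order $\ell'$, such that
\[
\frac{1}{4L^{2}}\,\mathscr{E}_{\bfr,\nu_1,L}(v)\ \le\ \frac{1}{4\ell'^{2}}\,\mathscr{E}_{\bfr,\nu_2,\ell'}(u)+o(1);
\]
applying this with $u$ an (almost) minimizer of $E(\bfr,\nu_2,\ell')$ and using the existence of the limit \eqref{eq:EnergyPerArea} (Remark~\ref{rem:existenceInitial}) then yields the theorem. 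One case is immediate: if $\bfr\le\beta(\nu_2)$, where $\beta(\nu):=\inf\mathrm{spec}\big((-i\nabla+\mathbf{A}_{\nu})^{2}$ on $\RRh$ with Neumann condition at $x_1=0\big)$, then extending $u$ by zero and using the spectral bound gives $\mathscr{E}_{\bfr,\nu_2,\ell'}(u)\ge(\beta(\nu_2)-\bfr)\|u\|_{L^{2}}^{2}+\tfrac{\bfr}{2}\|u\|_{L^{4}}^{4}\ge0$, hence $e(\bfr,\nu_2)=0\ge e(\bfr,\nu_1)$. So one may assume $\bfr>\beta(\nu_2)$; in this regime surface superconductivity is genuinely present at $\nu_2$, and near‑minimizers obey the usual a priori facts from \cite{P02,P04,FKP13}: $|u|\le1$, the testing identity $\mathscr{E}_{\bfr,\nu_2,\ell'}(u)=-\tfrac{\bfr}{2}\|u\|_{L^{4}}^{4}$ for critical points, the bound $\int|(-i\nabla+\mathbf{A}_{\nu_2})u|^{2}\le\bfr\|u\|_{L^{2}}^{2}$, and exponential (Agmon) decay away from $\{x_1=0\}$.

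The transformation is a linear change of variables $x=My$. If $M$ preserves $\RRh$ (that is, $Me_1\parallel e_1$ with positive factor and $M$ fixes the plane $\{x_1=0\}$), then the pullback of $\mathbf{A}_{\nu_2}$ has magnetic field $(\det M)\,M^{-1}\mathbf{B}_{\nu_2}$; imposing the single vector equation $M\mathbf{B}_{\nu_1}=(\det M)\,\mathbf{B}_{\nu_2}$ makes this field equal $\mathbf{B}_{\nu_1}$, so that after the induced gauge change $v:=e^{i\chi}\,u\circ M$ is an admissible competitor for the $\nu_1$‑problem with $|(-i\nabla+\mathbf{A}_{\nu_1})v|=\big|M^{\mathsf T}[(-i\nabla+\mathbf{A}_{\nu_2})u]\circ M\big|$. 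The equation on $M$ leaves a multiparameter family to optimize over. The diagonal subfamily $M=\mathrm{diag}\!\big(\tfrac{\cos\nu_1}{e\cos\nu_2},\tfrac{\sin\nu_1}{e\sin\nu_2},e\big)$, $e>0$, already turns $\tfrac{1}{4L^{2}}\mathscr{E}_{\bfr,\nu_1,L}(v)$ into an explicit combination of the three directional pieces of the magnetic kinetic energy of $u$ and of the non‑quadratic part, with weights that are monomials in $e$ governed by the identities $\big(\tfrac{\sin\nu_1}{\sin\nu_2}\big)^{2}\tfrac{\sin2\nu_2}{\sin2\nu_1}=\tfrac{\tan\nu_1}{\tan\nu_2}$ and its partner; but I expect this subfamily alone to be insufficient, because the dilation cannot avoid paying extra kinetic energy in the normal direction $x_1$. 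The off‑diagonal entries of $M$ should then be used as well: they produce cross terms $\int\mathrm{Re}\big(\overline{D_ju}\,D_ku\big)$ in $\big|M^{\mathsf T}[Du]\circ M\big|^{2}$ (with $D=-i\nabla+\mathbf{A}_{\nu_2}$) which, after integration by parts, can be signed and tuned to absorb that cost.

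The heart of the matter, and the step I expect to be the main obstacle, is to verify that some member of this family makes the pulled‑back energy density no larger than $\tfrac{1}{4\ell'^{2}}\mathscr{E}_{\bfr,\nu_2,\ell'}(u)+o(1)$. The difficulty is that the explicit bound depends on \emph{how} a near‑minimizer at $\nu_2$ distributes its kinetic energy among the three directions, not merely on the total, so a purely algebraic choice of parameters will not close the estimate. To get around this one must feed back the structure of near‑minimizers: $|u|\le1$, the identity $\mathscr{E}_{\bfr,\nu_2,\ell'}(u)=-\tfrac{\bfr}{2}\|u\|_{L^{4}}^{4}$, the bound $\int|Du|^{2}\le\bfr\|u\|_{L^{2}}^{2}$, and — decisively — the Agmon decay, which controls the normal kinetic energy $\int|\partial_1u|^{2}$ by $\|u\|_{L^{2}}^{2}$ up to an error that vanishes once the boundary‑layer width is sent to infinity; this removes $\int|\partial_1u|^{2}$ as an independent unknown and reduces the inequality to a one‑parameter trade‑off between the $x_3$‑magnetic energy and the remaining terms, which can then be balanced. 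What is left is bookkeeping: $v$ is supported in a rectangular box, so one passes to the enclosing cube of side $L$ of order $\ell'$, checks $L\to\infty$ and that the normalization factors combine exactly as the trigonometric identities dictate, and invokes Remark~\ref{rem:existenceInitial} for the limit. The endpoint $\nu_1=0$ follows either from the continuity of $\nu\mapsto e(\bfr,\nu)$ applied to $e(\bfr,0)\le e(\bfr,\nu)$ for $\nu>0$, or directly, the construction degenerating continuously as $\nu_1\to0^{+}$.
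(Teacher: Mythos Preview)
Your strategy is the same one the paper uses: compare $e(\bfr,\nu_1)$ and $e(\bfr,\nu_2)$ by pulling back a (near-)minimizer through a linear change of variables and then invoking the energy identity $\mathscr{E}=-\tfrac{\bfr}{2}\|u\|_{L^4}^4$ for critical points. The gap is in the constraint you place on the transformation. You require $M$ to preserve the coordinate half-space ($Me_1\parallel e_1$, $M$ fixing $\{x_1=0\}$), and this is precisely what forces you into the difficulty you identify: with that constraint, every admissible $M$ leaves a $\nu$-dependent weight in front of the \emph{normal} kinetic term $\int|\partial_1 u|^2$, and you have no independent control on that piece. Your proposed fix---off-diagonal entries to generate cross terms, plus Agmon decay to bound $\int|\partial_1 u|^2$ by $\|u\|_{L^2}^2$---does not close the estimate: Agmon decay controls weighted $L^2$ norms of $u$, not the individual directional kinetic energies, and the cross terms you obtain by integration by parts carry no definite sign in general.

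The paper's key observation is that one should \emph{not} insist that $M$ preserve $\{x_1>0\}$ as a coordinate half-space. Instead, one rotates in the $(x_1,x_2)$-plane so as to align a coordinate axis with the magnetic field $\mathbf{B}_\nu=(\sin\nu,\cos\nu,0)$, and then dilates in a single transverse variable. After this (and the accompanying rescaling $\varphi=\sqrt{\tan\nu}\,\tilde\varphi$), the functional becomes
\[
\int\Big(|\partial_1\tilde\varphi|^2+\tan^2\nu\,|\partial_2\tilde\varphi|^2+|(-i\partial_3+v_1)\tilde\varphi|^2-\bfr|\tilde\varphi|^2+\tfrac{\bfr}{2}\tan\nu\,|\tilde\varphi|^4\Big),
\]
on a half-space whose boundary is now the oblique plane $\{v_1=-v_2\}$. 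The point is that in these coordinates the $\nu$-dependence sits \emph{only} in the coefficients $\tan^2\nu$ and $\tan\nu$, both monotone, while the remaining kinetic terms are $\nu$-free---so the ``normal direction'' problem simply disappears. The comparison $\Delta(\varepsilon)=e(\bfr,\nu+\varepsilon)-e(\bfr,\nu)\ge0$ then follows from the test-function inequality together with the identity $\widetilde E=-\tfrac{\bfr}{2}\tan\nu\int|\tilde\varphi|^4$ and an elementary trigonometric check. There is one further ingredient you do not address: this change of variables sends the rectangular box $\mathcal{D}_\ell$ to a sheared (parallelepiped-shaped) domain, so one must first establish that the thermodynamic limit is insensitive to such shears. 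The paper does this separately (its Lemma~4.3), by a localization argument exploiting the boundary-layer decay.
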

The proof of this statement is given in Section \ref{Sec:Generalized}. Theorem \ref{thm:mon} complements the result \cite[Theorem 3.13]{FKP13}.

The monotonicity of the quantity $e(\bfr,\nu)$ with respect to the angle $\nu$ has an interest in the theory of superconductivity. Indeed, the ground state energy is a function of the inclination of the magnetic field and the result gives that the energy increase as the magnetic field tends to be perpendicular to the surface of the sample.

Finally, in Section \ref{Sec:Abri}, we construct bounded solutions with lattice structure,  
in the case when $\nu\neq 0$. 
The construction is completely analogous to the Abrikosov solutions in $2$D.
The result we prove -- Theorem \ref{bislem:3.1Par}, stated in Section \ref{Sec:statement} -- implies an upper bound on the energies \eqref{eq:E} and \eqref{eq:EnergyPerArea} defined in the first section.

\subsection*{Acknowledgements}
Fournais and Miqueu were partially supported by a Sapere Aude grant
from the Independent Research Fund Denmark, Grant number
DFF--4181-00221. Pan was partially supported
by the National Natural  Science Foundation of China grants no. 11671143 and no. 11431005.

\section{The linear problem}
Before starting the analysis of the non-linear Ginzburg-Landau functional $\mathscr{E}_{\bfr,\nu,\ell}$, we describe some of the linear spectral results that are needed.

In all of the paper, we will denote by $\spe(\mathcal{L})$ the spectrum of any given operator $\mathcal{L}$.

First we consider the magnetic Schr\"odinger operator associated with a constant magnetic field in $\R^3$.

\begin{prop}\label{prop:NoBoundary}
Let the magnetic Schr\"{o}dinger operator $(-i\nabla + {\mathbf A}_{\nu})^2$ be defined as a self-adjoint operator on $L^2(\R^3)$ with form domain
$$\left\{\psi\in L^2(\R^3) ~:~ (-i\nabla+\AAA)\psi\in L^2(\R^3,\C^3)\right\}.$$ For all $\nu\in\left[0,\frac{\pi}{2}\right]$ we have
$$
\inf \spe\{ (-i\nabla + {\mathbf A}_{\nu})^2 \} = 1.
$$
\end{prop}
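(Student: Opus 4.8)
The statement asserts that the bottom of the spectrum of the constant-field magnetic Laplacian $(-i\nabla+\mathbf{A}_\nu)^2$ on $L^2(\mathbb{R}^3)$ equals $1$, independently of $\nu$. The key observation is that, by the remark preceding Definition 2.3, the operator depends (up to unitary equivalence via gauge transformation) only on the magnetic field $\mathbf{B}_\nu$, and $|\mathbf{B}_\nu|=\sin^2\nu+\cos^2\nu=1$ for every $\nu$. So the plan is to reduce to the canonical case of a unit constant field in $\mathbb{R}^3$ and compute its spectrum explicitly.

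First I would exploit rotational invariance: since $\mathbf{B}_\nu$ is a unit vector, there is an orthogonal change of coordinates $R\in SO(3)$ taking $\mathbf{B}_\nu$ to $e_3=(0,0,1)^{T}$. Under this rotation (which is unitary on $L^2(\mathbb{R}^3)$ and conjugates $(-i\nabla+\mathbf{A})^2$ into $(-i\nabla+\mathbf{A}')^2$ with $\nabla\times\mathbf{A}'=e_3$), combined with a gauge transformation to pass from $\mathbf{A}'$ to the standard choice $\widetilde{\mathbf A}(x)=(0,x_1,0)^{T}$ (or $\tfrac12(-x_2,x_1,0)^T$), the operator becomes unitarily equivalent to $(-i\nabla+\widetilde{\mathbf A})^2$ on $L^2(\mathbb{R}^3)$. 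Hence $\inf\spe$ is unchanged and it suffices to treat this one operator.

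Next I would separate variables. Writing $(-i\nabla+\widetilde{\mathbf A})^2 = -\partial_{x_1}^2 + (-i\partial_{x_2}+x_1)^2 - \partial_{x_3}^2$, the $x_3$-variable decouples and contributes $[0,\infty)$ to the spectrum, while a partial Fourier transform in $x_2$ turns $(-i\partial_{x_2}+x_1)^2$ into $(x_1-\xi)^2$ with $\xi\in\mathbb{R}$ the dual variable. The remaining one-dimensional operator $-\partial_{x_1}^2+(x_1-\xi)^2$ on $L^2(\mathbb{R})$ is the shifted harmonic oscillator, with spectrum $\{1,3,5,\dots\}$ independent of $\xi$ (this is the full-line analogue of the de Gennes operator \eqref{DeGennes}, but with Dirichlet/no boundary, so the lowest eigenvalue is exactly $1$ rather than $\Theta_0$). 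Assembling the three pieces via the direct-integral decomposition gives $\spe\{(-i\nabla+\widetilde{\mathbf A})^2\}=[1,\infty)$, hence $\inf\spe = 1$.

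I do not expect any serious obstacle here; this is a classical computation. The only points requiring a little care are (i) checking that the rotation-plus-gauge transformation genuinely intertwines the self-adjoint operators together with their form domains as specified (standard, since gauge transformations $\psi\mapsto e^{i\phi}\psi$ with $\phi$ real and smooth preserve the magnetic form domain), and (ii) justifying the spectral computation through the partial Fourier transform / direct integral rather than formal separation of variables. Both are routine, so the proof is short.
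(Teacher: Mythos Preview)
Your proposal is correct and is exactly the unpacking of the paper's one-line proof, which merely invokes ``the well known structure of the Landau bands for constant magnetic field in $\R^3$.'' The rotation-plus-gauge reduction to a unit field along $e_3$, followed by separation of the $x_3$-variable and a partial Fourier transform reducing the transverse part to the shifted harmonic oscillator with spectrum $\{1,3,5,\dots\}$, is precisely what that phrase means, so there is nothing to add.
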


\begin{proof}
This is just the well known structure of the Landau bands for constant magnetic field in $\R^3$.
\end{proof}

Consider the Schr\"{o}dinger operator of a particle moving in a 3-dimensional  half-space $\R^3_+$, subject to a constant magnetic field of unit strength having an angle $\nu$ to the boundary-plane $\p \R^3_+$,
\begin{equation}\label{eq-3D-op-2}
\mathcal L(\nu)=(-i\nabla+\mathbf{A}_\nu)^2\quad{\rm in}\ L^2(\R_+^3)\,,
\end{equation}
with domain
\begin{multline}\label{eq:dom}
D(\mathcal L(\nu))
=\{u\in L^2(\R_+^3)~:~(-i\nabla+\mathbf{A}_\nu) u\in L^2(\R_+^3, \C^3)\,,\\
(-i\nabla+\mathbf{A}_\nu)^2 u\in L^2(\R_+^3),\;\; \partial_{x_1}u=0\text{ on } \p\mathbb R^3_+\}\,.
\end{multline}
The spectrum of the Schr\"{o}dinger operator with  (magnetic) Neumann boundary condition introduced in \eqref{eq-3D-op-2} has been the object of study of several works and is by now well understood. 
We denote by $\zeta(\nu)$ the lowest point in the spectrum of $\mathcal L(\nu)$,
\begin{equation}\label{zeta}
\zeta(\nu)=\inf\spe\big{(}\mathcal L(\nu)\big{)}\,.
\end{equation}
We collect below some properties concerning the quantity $\zeta(\nu)$ (see e.g. \cite[Lemmas~7.2.1 \&  7.2.2]{FH}).

In connection with the analysis of
the operator  $\mathcal L(\nu)$, we introduce the two-dimensional operator
\begin{equation*}
L(\nu)=-\partial_{x_1}^2-\partial_{x_2}^2+(-x_1\cos\nu+ x_2\sin\nu)^2\quad\text{in}\quad L^2(\R^2_+)\,,
\end{equation*}
whose domain $D(L(\nu))$ is
\begin{multline*}
D(L(\nu))=\big\{u\in L^2(\R_+^2)~:~(-x_1\cos\nu+ x_2\sin\nu)^ju\in L^2(\R^2_+),~j=1,2,~
\\ \partial_{x_1}u=0~\text{on} ~ \p\mathbb R^2_+ \big\}\,.
\end{multline*}

\begin{lem}\label{lem-p-z(nu)}
Let $\Theta_0$ be the universal constant introduced in~\eqref{eq-th0}. The function $[0,\pi/2]\ni\nu\mapsto\zeta(\nu)$ is continuous, monotonically non-decreasing, and we have that $\zeta(0)=\Theta_0$ and $\zeta(\pi/2)=1$.
Furthermore, for all $\nu\in(0,\pi/2)$ we have
\begin{enumerate}
\item \label{item1} $\spe(\mathcal L(\nu))=\spe(L(\nu))$\,;
\item $\spe_{\text{ess}}(L(\nu))=[1,\infty)$\,.
\end{enumerate}
\end{lem}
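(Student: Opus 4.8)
\emph{Proof strategy.} The plan is to prove the four assertions essentially in the reverse of the order listed. I would begin with item~(1). Since $\mathcal L(\nu)=-\partial_{x_1}^2-\partial_{x_2}^2+(-i\partial_{x_3}-x_1\cos\nu+x_2\sin\nu)^2$ has no $x_3$–dependence, it commutes with translations in $x_3$, and a partial Fourier transform in $x_3$ yields a direct integral decomposition $\mathcal L(\nu)\simeq\int^{\oplus}_{\R}P_\nu(\xi_3)\,d\xi_3$ with fibre $P_\nu(\xi_3)=-\partial_{x_1}^2-\partial_{x_2}^2+(\xi_3-x_1\cos\nu+x_2\sin\nu)^2$ on $L^2(\R^2_+)$ with Neumann condition on $\{x_1=0\}$. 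Because $\nu\in(0,\pi/2)$ forces $\sin\nu\neq0$, the translation $x_2\mapsto x_2-\xi_3/\sin\nu$ (which preserves $\R^2_+$ and the Neumann condition) conjugates $P_\nu(\xi_3)$ unitarily onto $L(\nu)$; hence every fibre has spectrum $\sigma(L(\nu))$, and therefore $\sigma(\mathcal L(\nu))=\overline{\bigcup_{\xi_3}\sigma(P_\nu(\xi_3))}=\sigma(L(\nu))$.

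For item~(2) I would rotate the $(x_1,x_2)$–plane by $y_1=x_2\sin\nu-x_1\cos\nu$, $y_2=x_1\sin\nu+x_2\cos\nu$, which is an isometry of $\R^2$ turning $L(\nu)$ into $-\partial_{y_1}^2-\partial_{y_2}^2+y_1^2$ on the half-plane $\Omega_\nu=\{y_2\sin\nu-y_1\cos\nu>0\}$ with Neumann condition on $\partial\Omega_\nu$. The corresponding bulk operator $-\Delta+y_1^2$ on all of $\R^2$ is a harmonic oscillator tensored with a free Laplacian, hence has spectrum $[1,\infty)$; since the positive $y_2$–half-axis lies in $\Omega_\nu$ and its distance to $\partial\Omega_\nu$ tends to $+\infty$, truncating the generalized eigenfunctions $e^{iky_2}h_0(y_1)$ (with $h_0$ the ground state of $-\partial_{y_1}^2+y_1^2$) to large balls centred far out along that half-axis produces singular Weyl sequences at every $\lambda=1+k^2$, so $[1,\infty)\subset\sigma_{\mathrm{ess}}(L(\nu))$. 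For the reverse inclusion I would invoke Persson's lemma with an IMS partition $1=\chi_1^2+\chi_2^2$ adapted to the distance $x_1$ to the boundary at a scale $\rho$ with $1\ll\rho\ll R$: on $\{x_1>\rho\}$ the function $\chi_2u$ extends by zero to $\R^2$ and the bulk operator there is $\geq1$, while on $\{x_1<2\rho\}$ outside the ball $B_R$ the potential $(x_2\sin\nu-x_1\cos\nu)^2$ is bounded below by a quantity of order $R^2\sin^2\nu$ (this is precisely where $\sin\nu>0$ is used). Letting $R\to\infty$ gives $\inf\sigma_{\mathrm{ess}}(L(\nu))\geq1$, which together with the Weyl sequences yields item~(2).

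The two boundary values require a separate, direct Fourier reduction since the translation argument degenerates there. At $\nu=0$, transforming in both $x_2$ and $x_3$ reduces $\mathcal L(0)$ to the fibres $-\partial_{x_1}^2+\xi_2^2+(\xi_3-x_1)^2$ on $L^2(\R_+)$ with Neumann condition; taking $\xi_2=0$ and translating in $x_1$ gives the de Gennes operator $H(\xi_3)$ of Remark~\ref{QTheta}, so $\zeta(0)=\inf_{\xi_3}\mu_1(\xi_3)=\Theta_0$. At $\nu=\pi/2$, transforming in $x_3$ and translating in $x_2$ reduces $\mathcal L(\pi/2)$ to $-\partial_{x_1}^2-\partial_{x_2}^2+x_2^2$ on $\R^2_+$ with Neumann condition on $\{x_1=0\}$, whose bottom is $0+1=1$, so $\zeta(\pi/2)=1$. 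As for continuity of $\nu\mapsto\zeta(\nu)$: upper semicontinuity at a point $\nu_0$ follows by testing $\mathcal L(\nu)$ on a compactly supported approximate minimizer of the quadratic form of $\mathcal L(\nu_0)$ (the smooth functions with compact support in $\overline{\R^3_+}$, not required to vanish on the boundary, form a form core) and using that the potential depends on $\nu$ continuously, locally uniformly; lower semicontinuity is extracted from the same IMS splitting as in item~(2): given near-ground states $u_k$ for $\nu_k\to\nu_0$, no mass is lost except into a fixed compact set (on which a limit exists by Rellich) or into the bulk (where energy is at least mass), and passing to the limit while using $\zeta(\nu_0)\leq1$ gives $\liminf_{\nu\to\nu_0}\zeta(\nu)\geq\zeta(\nu_0)$.

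The monotonicity $\zeta(\nu_1)\leq\zeta(\nu_2)$ for $\nu_1\leq\nu_2$ is the step I expect to be the main obstacle. The potentials $(x_2\sin\nu-x_1\cos\nu)^2$ are not pointwise ordered in $\nu$, so there is no direct comparison of quadratic forms; instead one should exploit the geometric picture from item~(2), namely that $\zeta(\nu)$ is the ground-state energy of a harmonic-oscillator well that meets the Neumann boundary at angle $\nu$—an increasingly unfavourable configuration as $\nu$ grows, the limiting case $\nu=0$ being exactly the de Gennes model—together with a trial-function construction transporting a near-minimizer for the $\nu_1$ geometry into the $\nu_2$ geometry with no greater energy. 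The careful execution of this comparison (and of the continuity at the endpoints) is carried out in \cite[Lemmas 7.2.1 \& 7.2.2]{FH}, and I would follow that argument.
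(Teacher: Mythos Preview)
The paper does not prove this lemma at all: it is presented as a collection of known spectral facts, with a pointer to \cite[Lemmas~7.2.1 \& 7.2.2]{FH} before the statement and the remark afterwards that the results ``have been obtained in \cite[Theorem 3.1]{LP00}, before being improved in \cite{HM}.'' There is no argument in the paper itself.

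Your proposal therefore goes well beyond what the paper does. The outline you give---partial Fourier transform in $x_3$ and the translation $x_2\mapsto x_2-\xi_3/\sin\nu$ for item~(1); a rotation plus Weyl sequences and a Persson/IMS localisation for item~(2); the direct one-dimensional reductions at $\nu=0$ and $\nu=\pi/2$; and semicontinuity arguments for continuity---is the standard route and matches what is done in the cited references. Your honest deferral on the monotonicity step to \cite[Lemmas~7.2.1 \& 7.2.2]{FH} is exactly what the paper does for the \emph{entire} lemma. In short: both you and the paper point to the same literature; you simply supply more of the actual argument, and what you supply is correct.
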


The results of Lemma~\ref{lem-p-z(nu)}  (concerning the spectrum of $\mathcal L(\nu)$) have been obtained in \cite[Theorem 3.1]{LP00}, before being improved in \cite{HM}.
Notice how the dimensional reduction in conclusion (\ref{item1}) in Lemma \ref{lem-p-z(nu)} is only valid for $\nu>0$.

\begin{rem}\label{rem-2D-sp}
Suppose that $\nu\in(0,\pi/2)$. It results from Lemma~\ref{lem-p-z(nu)} that $\zeta(\nu)$ is the lowest eigenvalue of $L(\nu)$. It is a simple eigenvalue by a positivity argument. Consequently, we can select a unique non-negative eigenfunction $\phi\in L^2(\R^2_+)$, normalized and
such that
\begin{equation}\label{uniqueeigen}
\int_{\R^2_+}\left(|\nabla\phi|^2+|(-x_1\cos\nu+x_2\sin\nu)\phi|^2\right)\,\mathrm{d}x
=\zeta(\nu)\,.
\end{equation}
\end{rem}
\begin{nota}\label{resLP00}
The function defined through \eqref{uniqueeigen} will be denoted $\phid$.
\end{nota}
The next result concerns the decay of the function $\phid$ that we will need. We refer to \cite{R10a} for a stronger statement. 

\begin{prop}\label{prop:ptdecay}[See \cite{R10a}]
Let $\nu\in(0,\frac{\pi}{2})$. The ground state $\phid$ of the operator $\Lv$ belongs to the Schwartz class $\mathscr{S}(\overline{\Rh})$.
\end{prop}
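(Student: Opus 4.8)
The plan is to combine elliptic regularity with weighted (Agmon‑type) exponential decay estimates. Write $\phi:=\phid$ for the ground state; by Lemma~\ref{lem-p-z(nu)} and Remark~\ref{rem-2D-sp} it may be taken real and non‑negative, it solves $\Lv\phi=\zv\,\phi$ in $\Rh$ with the Neumann condition $\partial_{x_1}\phi=0$ on $\partial\Rh$, and $\zv$ lies \emph{strictly} below $\inf\spe_{\text{ess}}(\Lv)=1$. First I would rotate the $(x_1,x_2)$–variables, setting $y_1=-x_1\cos\nu+x_2\sin\nu$ and $y_2=x_1\sin\nu+x_2\cos\nu$, so that $\Lv$ becomes $-\partial_{y_1}^2-\partial_{y_2}^2+y_1^2$ on the half‑plane $\{y_2\sin\nu-y_1\cos\nu>0\}$ (which is exactly $\{x_1>0\}$), carrying a Neumann condition on its straight boundary line. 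Since this operator has smooth polynomial coefficients and a flat boundary with the coercive Neumann condition, standard interior and boundary elliptic regularity give $\phi\in\mathcal{C}^\infty(\overline{\Rh})$ right away; all the work is in controlling the behaviour at infinity.

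The key step is to prove exponential decay in $L^2$: $\int_{\Rh}e^{2\epsilon|x|}\bigl(|\phi|^2+|\nabla\phi|^2\bigr)\,\mathrm{d}x<\infty$ for some $\epsilon>0$. For a real Lipschitz weight $w$, multiplying the eigenvalue equation by $e^{2w}\phi$ and integrating by parts — the boundary term vanishes by the Neumann condition — gives the usual identity
\begin{equation*}
\int\bigl|\nabla(e^{w}\phi)\bigr|^2\,\mathrm{d}y+\int\bigl(y_1^2-\zv-|\nabla w|^2\bigr)\,e^{2w}|\phi|^2\,\mathrm{d}y=0 .
\end{equation*}
Where $|y_1|$ is bounded away from $0$, the coefficient $y_1^2-\zv-|\nabla w|^2$ is positive for $|\nabla w|$ small, which already forces (even Gaussian) decay transverse to the valley $\{y_1=0\}$. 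Along that valley the potential is useless, but there $\phi$ sits far from the boundary, where the model operator $-\partial_{y_1}^2+y_1^2$ on $\R$ has bottom $1>\zv$; taking $w$ to depend only on $y_2$ in that region, with small slope, the weight commutes with $\partial_{y_1}$ and the excess $|\nabla w|^2e^{2w}|\phi|^2$ is absorbed into $\int\bigl(|\partial_{y_1}(e^{w}\phi)|^2+y_1^2e^{2w}|\phi|^2\bigr)\ge\int e^{2w}|\phi|^2$. Interpolating a single weight between the two regimes — outside a bounded set one always has either $|y_1|$ large (potential confinement) or $x_1$ large (spectral‑gap confinement) — one can take $w$ comparable to $\epsilon|x|$ globally and obtain the displayed bound. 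I expect this valley estimate, where the electric Agmon metric degenerates and one must feed on the spectral gap $1-\zv>0$, to be the only genuinely non‑routine point; it is exactly the type of Agmon estimate carried out in \cite{R10a,HM}.

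Finally I would bootstrap to pointwise decay of all derivatives. On each half‑ball $B(x_0,1)\cap\overline{\Rh}$, the elliptic estimate for $-\Delta$ with Neumann data, applied to $-\Delta\phi=(\zv-y_1^2)\phi$, gives $\|\phi\|_{H^2(B(x_0,1)\cap\overline{\Rh})}\le C\bigl(1+|x_0|^2\bigr)\|\phi\|_{L^2(B(x_0,2)\cap\overline{\Rh})}$, the polynomial factor being the size of the potential; since the weighted $L^2$ bound forces $\|\phi\|_{L^2(B(x_0,2)\cap\overline{\Rh})}\le Ce^{-\epsilon|x_0|}$, this yields $\|\phi\|_{H^2(B(x_0,1)\cap\overline{\Rh})}\le Ce^{-\epsilon'|x_0|}$. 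Iterating this estimate (tangential derivatives commute with the Neumann condition, and normal derivatives of any order are recovered algebraically from the equation) produces the same kind of bound for every $\partial^\alpha\phi$, and the two‑dimensional Sobolev embedding $H^2\hookrightarrow\Cfon$ then turns these into $|\partial^\alpha\phi(x)|\le C_\alpha e^{-\epsilon'|x|}$ on $\overline{\Rh}$ for all $\alpha$. Since $e^{-\epsilon'|x|}$ dominates every polynomial, $\sup_{\overline{\Rh}}|x^\beta\,\partial^\alpha\phi|<\infty$ for all multi‑indices $\alpha,\beta$, which is precisely the assertion $\phi\in\mathscr{S}(\overline{\Rh})$.
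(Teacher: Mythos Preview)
The paper does not actually prove this proposition: it is stated with the citation ``[See \cite{R10a}]'' and no argument is given in the text beyond that reference (and a further pointer to \cite{BDPR12}). So there is no ``paper's own proof'' to compare against; the authors are simply importing a known result.

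Your sketch is a correct outline of the standard proof and is in the spirit of what is done in the cited references. The three ingredients you identify --- interior/boundary elliptic regularity for smoothness, an Agmon-type weighted $L^2$ estimate exploiting both the potential confinement (growth of $y_1^2$ transverse to the valley) and the spectral gap $1-\zv>0$ (along the valley, far from the boundary), and then an elliptic bootstrap with Sobolev embedding to pass to pointwise exponential decay of all derivatives --- are exactly the right ones. Your observation that the only non-routine point is the decay along the valley $\{y_1=0\}$, where the electric Agmon metric degenerates and one has to use that the model operator on $\R$ has bottom $1$, is accurate and is precisely what distinguishes this from a purely potential-confinement situation.

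Two small technical points worth tightening if you write this out in full: first, the Agmon identity requires an a priori justification of the integration by parts (one normally truncates the weight, $w_R=\min(w,R)$, derives a bound uniform in $R$, and lets $R\to\infty$); second, the ``interpolating a single weight between the two regimes'' step deserves a concrete choice of $w$ and a partition-of-unity localisation to make the valley argument quantitative. Neither is a gap in the strategy, just places where the sketch would need to be fleshed out.
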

For completeness, let us mention that 
other decay properties of the eigenfunction $\phi^{2\Dim}_\nu$ are established in \cite[Theorem~1.1]{BDPR12}. 

In \cite[Theorem 4.2]{LP00}, it is proved that for all $\nu\in(0,\frac{\pi}{2})$, the dimension of the eigenspace associated with the lowest eigenvalue $\zv$ for the operator $\LLv$ is infinite. Thus, we have that $\zv$ is not a discrete eigenvalue but belongs to the essential spectrum of the operator $\LLv$.  
The following result gives the form of the $L^2(\RRh)$ eigenfunctions.
\begin{lem}\label{Sdecay}
Let $\nu \in (0,\frac{\pi}{2})$.
For all $\fou\in L^2(\mathbb{R})$, the function $\phidd$ defined by
\begin{equation}\label{shape}
\phidd(x)=\mathcal{F}^{-1}\left(\xi_3\mapsto \fou(\xi_3)\phid(x_1,x_2-\frac{\xi_3}{\sv})\right),
\end{equation}
(where $\mathcal{F}^{-1}$ is the inverse Fourier transform in the $\xi_3$ variable) is an $L^2(\RRh)$ eigenfunction associated with the eigenvalue $\zv$ of the operator $\LLv$ (in particular, the Neumann condition at the boundary of $\RRh$ is satisfied), and all the $L^2(\RRh)$ eigenfunctions associated with the eigenvalue $\zv$ are in this form.

What is more, for every $\fou \in \mathscr{C}^\infty_c(\mathbb{R})$ (the set of smooth functions with compact support), we have that the function $\phidd$ defined by \eqref{shape} belongs to the Schwartz class $\mathscr{S}(\overline{\RRh})$.
\end{lem}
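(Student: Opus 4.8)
The plan is to exploit translation invariance of the problem in the $x_3$ direction: a partial Fourier transform in $x_3$, combined with a $\xi_3$-dependent translation in $x_2$, conjugates $\LLv$ to a \emph{constant-fibre} direct integral whose fibre is the two-dimensional operator $\Lv$. Concretely, let $\mathcal{F}_3$ denote the Fourier transform in $x_3$. Since $\AAA=(0,0,-x_1\cv+x_2\sv)$ has no $x_3$-dependence, conjugating $\LLv$ by $\mathcal{F}_3$ gives $\int^{\oplus}_{\R}\LLv[\xi_3]\,\mathrm{d}\xi_3$ over $(\R,\mathrm{d}\xi_3)$ with fibre
\begin{equation*}
\LLv[\xi_3]=-\partial_{x_1}^2-\partial_{x_2}^2+(\xi_3-x_1\cv+x_2\sv)^2\quad\text{in }L^2(\Rh),\ \ \partial_{x_1}u=0\text{ on }\partial\Rh .
\end{equation*}
Because $\nu\in(0,\tfrac{\pi}{2})$ we have $\sv\neq0$, and the translation $\sigma_{\xi_3}u(x_1,x_2):=u(x_1,x_2+\xi_3/\sv)$ is a unitary on $L^2(\Rh)$ with $\sigma_{\xi_3}^{-1}\LLv[\xi_3]\sigma_{\xi_3}=\Lv$ by a one-line computation that does not touch the $x_1$ variable (and maps the fibre domain onto $D(\Lv)$). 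Hence $U:=\big(\int^{\oplus}_{\R}\sigma_{\xi_3}^{-1}\,\mathrm{d}\xi_3\big)\circ\mathcal{F}_3$ is unitary from $L^2(\RRh)$ onto $L^2(\R;L^2(\Rh))$ and $U\,\LLv\,U^{-1}=\int^{\oplus}_{\R}\Lv\,\mathrm{d}\xi_3$; in particular this re-derives $\spe(\LLv)=\spe(\Lv)$. The Fourier-sign and translation conventions are fixed once and for all so that undoing $U$ reproduces the shift in \eqref{shape}.

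\emph{The eigenspace at $\zv$.} For a constant-fibre direct integral $\int^{\oplus}_{\R}\Lv\,\mathrm{d}\xi_3$, a function $\Psi$ in the operator domain satisfies $\big(\int^{\oplus}_{\R}\Lv\,\mathrm{d}\xi_3\big)\Psi=\lambda\Psi$ if and only if $\Psi(\cdot,\cdot,\xi_3)\in\ker(\Lv-\lambda)$ for a.e.\ $\xi_3$; thus the eigenspace at $\lambda$ equals $L^2(\R,\mathrm{d}\xi_3)\otimes\ker(\Lv-\lambda)$. By Remark~\ref{rem-2D-sp}, $\ker(\Lv-\zv)=\C\,\phid$, so the eigenspace of $\int^{\oplus}_{\R}\Lv\,\mathrm{d}\xi_3$ at $\zv$ is exactly $\{(x_1,x_2,\xi_3)\mapsto\fou(\xi_3)\phid(x_1,x_2):\fou\in L^2(\R)\}$, which is infinite-dimensional and of norm $\|\fou\|_{L^2(\R)}$ since $\phid$ is normalized. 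Applying $U^{-1}$ — first the fibrewise translation, turning $\phid(x_1,x_2)$ into the shifted profile of \eqref{shape}, then $\mathcal{F}_3^{-1}$ in $\xi_3$ — produces precisely the functions $\phidd$ of \eqref{shape}. Since $U^{-1}$ acts only on the $(x_2,x_3)$ variables and $\phid$ satisfies $\partial_{x_1}\phid=0$ on $\partial\Rh$, each such $\phidd$ obeys the Neumann condition and lies in $D(\LLv)$ of \eqref{eq:dom}. This establishes simultaneously that every $\phidd$ given by \eqref{shape} is an $L^2(\RRh)$ eigenfunction of $\LLv$ at $\zv$ and that there are no others.

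\emph{Schwartz regularity.} Fix $\fou\in\mathscr{C}^\infty_c(\R)$ with $\supp\fou\subset[-M,M]$ and set $g(x_1,x_2,\xi_3):=\fou(\xi_3)\phid(x_1,x_2-\xi_3/\sv)$, so that $\phidd=\mathcal{F}^{-1}_{\xi_3\to x_3}g$. By Proposition~\ref{prop:ptdecay}, $\phid\in\mathscr{S}(\overline{\Rh})$. Differentiating $g$ in $\xi_3$ by the chain rule writes $\partial_{\xi_3}^{\gamma}g$ as a finite sum of terms $\fou^{(a)}(\xi_3)(\partial_{x_2}^{b}\phid)(x_1,x_2-\xi_3/\sv)$ with $a+b=\gamma$, and, splitting $x_2=(x_2-\xi_3/\sv)+\xi_3/\sv$ with $|\xi_3|\le M$ on $\supp\fou$, all the weighted seminorms $\sup_{\overline{\Rh}\times\R}|x_1^{\beta_1}x_2^{\beta_2}\partial_{x_1}^{\alpha_1}\partial_{x_2}^{\alpha_2}\partial_{\xi_3}^{\gamma}g|$ are finite, with $g$ compactly supported in $\xi_3$. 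Under $\mathcal{F}^{-1}_{\xi_3}$, multiplication by $x_3^{\beta_3}$ corresponds to $\partial_{\xi_3}^{\beta_3}$ and $\partial_{x_3}^{\alpha_3}$ to multiplication by $i\xi_3$ (up to constants), so each seminorm $\sup|x_1^{\beta_1}x_2^{\beta_2}x_3^{\beta_3}\partial_x^{\alpha}\phidd|$ is dominated by the $L^1_{\xi_3}L^\infty_{x_1,x_2}$-norm of a finite sum of functions as above (times bounded powers of $\xi_3$), hence finite; differentiating under the integral sign yields smoothness up to $\{x_1=0\}$. Therefore $\phidd\in\mathscr{S}(\overline{\RRh})$.

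\emph{Main obstacle.} The argument is largely bookkeeping, but two points deserve emphasis. First, $\zv$ lies in the \emph{essential} spectrum of $\LLv$ (it is an eigenvalue of infinite multiplicity), so no perturbative spectral theory is available and the direct-integral decomposition is genuinely indispensable for pinning down the eigenspace. Second, the care in the first step is to verify that $\xi_3\mapsto\sigma_{\xi_3}$ is a measurable family of unitaries conjugating each fibre \emph{exactly} onto $\Lv$ — so that the resulting direct integral is truly constant-fibre — and to reconcile the Fourier-transform and translation conventions so that undoing $U$ reproduces \eqref{shape} on the nose.
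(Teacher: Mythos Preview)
Your proof is correct and follows the same underlying mechanism as the paper's: the partial Fourier transform in $x_3$ followed by the $\xi_3$-dependent translation in $x_2$ is exactly the reduction the paper invokes (by reference to \cite{LP00}) to identify $\phidd$ as an eigenfunction, and your Schwartz argument uses the same key trick --- splitting $x_2=(x_2-\xi_3/\sv)+\xi_3/\sv$ on the compact $\xi_3$-support of $\fou$ --- that the paper carries out via an explicit change of variable in the integral. The main difference is one of presentation: you organize the first part as a constant-fibre direct integral and use simplicity of $\zv$ for $\Lv$ to read off the full eigenspace in one stroke, whereas the paper outsources this step to \cite{LP00}; your version is more self-contained and makes the ``all eigenfunctions have this form'' claim transparent, while the paper's hands-on estimate for the $x_2$-decay is perhaps more concrete for a reader unfamiliar with direct integrals.
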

\begin{proof}
To prove the first part of the statement, we note that the fact that the function $\phidd$ is an eigenfunction associated with the eigenvalue $\zv$ of the operator $\LLv$ comes from the fact that the function $\phid$ is an eigenfunction associated with the eigenvalue $\zv$ of the operator $\Lv$. We refer to the proof of Lemma 4.5 in \cite{LP00} for the details.

We prove here the last assertion of the lemma. 
We will only prove the decay of $\phidd$ since 
the same decay properties of the derivatives can be obtained in the same way, using that $\phid\in \mathscr{S}(\overline{\Rh})$ and $\fou\in \mathscr{C}^\infty_c(\mathbb{R})$. 

Since $\phid\in \mathscr{S}(\overline{\Rh})$ and $\fou$ belongs to $\mathscr{C}^\infty_c(\mathbb{R})$, we easily have that the function $\phidd$ defined by \eqref{shape} belongs to $\mathscr{C}^\infty(\mathbb{R}^3)$.

It suffices to establish the decay in each variable individually. 
The decay in the $x_1$ variable (uniformly in $x_2$) is obvious since the function $\phid$ belongs to  $\mathscr{S}(\overline{\Rh})$ (in the $x_1$ and $x_2$ variables, see Proposition \ref{prop:ptdecay}). The good estimate in the $x_3$ variable is also straightforward since the Fourier transform of a function in $L^1$ is bounded. We now deal with the decay in the $x_2$ variable. Let $k\in \mathbb{N}$. We are going to give an upper bound on the term
\begin{equation*}
\left|x_2^k\displaystyle{\int_{\mathbb{R}_{\xi_3}}\phid\left(x_1,x_2-\frac{\xi_3}{\sv}\right)e^{ix_3\xi_3}\fou(\xi_3)\,\mathrm{d}\xi_3}\right|.
\end{equation*}
Let $\supp f \subset [-M,M]$. 
For convenience, we perform a change of variable in the integral ($\eta_3=x_2-\frac{\xi_3}{\sv}$) and estimate
\begin{align*}
&\left|x_2^k\displaystyle{e^{ix_3x_2\sv}\int_{\mathbb{R}_{\eta_3}}\phid(x_1,\eta_3)e^{-ix_3\eta_3\sv}\fou((x_2-\eta_3)\sv)\,\mathrm{d}\eta_3}\right| \\
&\leq 
|x_2|^k \| f \|_{\infty} \int_{x_2 - M/\sin \nu}^{x_2 - M/\sin \nu}|\phid(x_1,\eta_3)|\,\mathrm{d}\eta_3\\
&\leq \frac{2M}{\sin \nu} \| f \|_{\infty} \frac{|x_2|^k}{(1+|x_2- M/\sin \nu|^2)^N} \sup_{\Rh} \left( (1+\eta^2)^N |\phid(x_1,\eta)| \right),
\end{align*}
which gives the desired bound upon choosing $N\geq k$.
\end{proof}

\section{Parallel field}\label{Sec:Parallel}
In this section we study the case when $\nu=0$ and prove Theorem~\ref{thm:Uniqueness}. We will prove that the `thermodynamic' limit---the limit in \eqref{eq:EnergyPerArea}---can be expressed through the $1$D functional $\mathscr{E}_{\bfr,\xi}^{1\Dim}$  given in \eqref{eq:1DGL_intro}.
The principal properties of the functional $\mathscr{E}_{\bfr,\xi}^{1\Dim}$ are well known (see \cite[Section 3 and Appendix]{P02}, \cite[Section 14.2]{FH} and the references therein, and also \cite{FHP11}). For understanding and completeness, we recall in the following lemmas the underlying results that we will need.
\begin{lem}\label{prelim1D}
For all $\xi\in\mathbb{R}$ and all $\bfr\in\mathbb{R}_+$, the functional $\mathscr{E}_{\bfr,\xi}^{1\Dim}$ admits a non-negative minimizer $f_{\bfr,\xi}$,
in the space
\begin{equation*}
B^1(\mathbb{R}_+)=\left\{f\in L^2(\R_+,\R
)~:~ t^pf^{(q)}(t)\in L^2(\R_+,\R
),\; \forall p, q\in \N,\; p+q\le 1\right\}.
\end{equation*}
The minimizers satisfy the Euler-Lagrange equations
\begin{equation}\label{varf0}
\left\{\aligned
 -&f_{\bfr,\xi}''+(t-\xi)^2f_{\bfr,\xi}=\bfr(1-f_{\bfr,\xi}^2)f_{\bfr,\xi},\q t>0,\\
  &f_{\bfr,\xi}'(0)=0.
 \endaligned\right.
\end{equation}
Moreover, we have
\begin{equation}\label{energyf0}
\underset{f\in B^1(\mathbb{R}_+)}{\inf}\mathscr{E}_{\bfr,\xi}^{1\Dim}(f)=-\frac{\bfr}{2}\Vert f_{\bfr,\xi}\Vert^4_{L^4(\R_+)},
\end{equation}
and the inequality\dpo
\begin{equation*}
\Vert f_{\bfr,\xi}\Vert_{L^\infty(\R_+)}\le 1.
\end{equation*}
What is more, the equation\dpo
\begin{equation}\label{FH14.19}
\left\{\aligned
 -&f''+(t-\xi)^2f=\bfr (1-f^2)f,\q t>0,\\
&f'(0)=0.
\endaligned\right.
\end{equation}
admits non-trivial bounded solutions if and only if $\mu_1(\xi)<\bfr$ (see Subsection \ref{QTheta} (above) for the definition of $\mu_1$), and if $f\in L^\infty(\R_+)$ satisfies \eqref{FH14.19}, then we have that
$$\Vert f\Vert_{L^\infty(\R_+)}\le 1\q\text{\rm and}\q f\in L^2(\R_+).
$$
If we have $\mu_1(\xi)<\bfr<1$, the non-negative minimizer $f_{\bfr,\xi}=f_\xi$ of the functional $\mathscr{E}_{\bfr,\xi}^{1\Dim}$ is unique and strictly positive.
\end{lem}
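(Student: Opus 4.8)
The statement gathers classical facts about the $1$D Ginzburg--Landau functional; the plan is to reproduce the arguments of \cite[Section~3 and Appendix]{P02} and \cite[Section~14.2]{FH}. \emph{Existence, non-negativity, Euler--Lagrange, energy identity.} I would run the direct method on $B^1(\R_+)$, the only delicate point being coercivity because of the indefinite term $-\bfr\int|f|^2$. To handle it, note that replacing $f$ by $\min(|f|,1)$ does not increase $\mathscr{E}_{\bfr,\xi}^{1\Dim}$ (a pointwise check: the kinetic and confining terms decrease, and $s\mapsto-\bfr s+\tfrac\bfr2 s^2$ is non-increasing up to $s=1$ and then increasing, applied to $s=|f|^2$), so along a minimizing sequence one may assume $\|f\|_{L^\infty}\le1$; splitting $\int_0^\infty|f|^2$ over $\{(t-\xi)^2\le R\}$ (measure $2\sqrt R$, where $|f|\le1$) and its complement (where $|f|^2\le R^{-1}(t-\xi)^2|f|^2$) gives, for $R$ large,
\begin{equation*}
\int_0^\infty\Bigl(|f'|^2+\tfrac12(t-\xi)^2|f|^2\Bigr)\,\mathrm{d}t\ \le\ \mathscr{E}_{\bfr,\xi}^{1\Dim}(f)+C_\bfr,
\end{equation*}
i.e.\ a uniform $B^1(\R_+)$-bound. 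Since $B^1(\R_+)\hookrightarrow L^2(\R_+)$ compactly, a subsequence converges strongly in $L^2$ (hence, by the $L^\infty$-bound, in $L^4$) and weakly in $B^1(\R_+)$; weak lower semicontinuity of $\int(|f'|^2+(t-\xi)^2|f|^2)$ yields a minimizer $f_{\bfr,\xi}$, and passing to $|f_{\bfr,\xi}|$ (using $|(|f|)'|\le|f'|$ a.e.) makes it non-negative. The Euler--Lagrange equation \eqref{varf0} and smoothness follow by standard ODE regularity; multiplying \eqref{varf0} by $f_{\bfr,\xi}$ and integrating by parts (no boundary contribution: $f_{\bfr,\xi}'(0)=0$ and $f_{\bfr,\xi}$ decays) gives $\int(|f_{\bfr,\xi}'|^2+(t-\xi)^2f_{\bfr,\xi}^2)=\bfr\|f_{\bfr,\xi}\|_{L^2}^2-\bfr\|f_{\bfr,\xi}\|_{L^4}^4$, and inserting this into the functional yields \eqref{energyf0}.

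\emph{The bound $\|f\|_{L^\infty}\le1$ and the dichotomy.} For $f_{\bfr,\xi}$, and for any bounded solution $f$ of \eqref{FH14.19}, the sup bound follows from a maximum-principle argument: on a connected component of $\{f>1\}$ the equation forces $f''=\bigl((t-\xi)^2-\bfr(1-f^2)\bigr)f>0$, so $f$ is strictly convex there, and a bounded strictly convex function on such a component cannot remain $>1$ (using $f'(0)=0$ if the component reaches $t=0$, together with the fact that a bounded convex function on a half-line is eventually monotone, which quickly forces a contradiction). Next, if $\mu_1(\xi)<\bfr$, testing $\mathscr{E}_{\bfr,\xi}^{1\Dim}$ with $\varepsilon\phi_\xi$, where $\phi_\xi$ is the $L^2$-normalized Neumann ground state of $H(\xi)$ from \eqref{DeGennes}, gives $\mathscr{E}_{\bfr,\xi}^{1\Dim}(\varepsilon\phi_\xi)=\varepsilon^2(\mu_1(\xi)-\bfr)\|\phi_\xi\|_{L^2}^2+\tfrac\bfr2\varepsilon^4\|\phi_\xi\|_{L^4}^4<0$ for small $\varepsilon$, so the infimum in \eqref{energyf0} is strictly negative and $f_{\bfr,\xi}\not\equiv0$ is the desired non-trivial bounded solution. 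Conversely, let $f\in L^\infty(\R_+)$ solve \eqref{FH14.19}; by the above $\|f\|_{L^\infty}\le1$, and since the effective potential $(t-\xi)^2-\bfr(1-f^2)\to+\infty$, the linear ODE $f''=\bigl((t-\xi)^2-\bfr(1-f^2)\bigr)f$ has an exponential dichotomy at infinity, so the bounded solution $f$ is proportional to the recessive (decaying) solution and falls off faster than any exponential; in particular $f\in L^2(\R_+)$ (alternatively, an Agmon-type weighted estimate with weight $\Phi'\sim\sqrt{(t-\xi)^2-\bfr}$). Having $f\in L^2$, we integrate by parts as above: $\mu_1(\xi)\|f\|_{L^2}^2\le\int(|f'|^2+(t-\xi)^2f^2)=\bfr\|f\|_{L^2}^2-\bfr\|f\|_{L^4}^4$, so $\bfr\le\mu_1(\xi)$ forces $\|f\|_{L^4}=0$, i.e.\ $f\equiv0$, which closes the equivalence.

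\emph{Uniqueness and strict positivity for $\mu_1(\xi)<\bfr<1$.} For strict positivity: $f_{\bfr,\xi}\ge0$ is non-trivial and solves a \emph{linear} second-order ODE $-f''+q\,f=0$ with $q:=(t-\xi)^2-\bfr(1-f_{\bfr,\xi}^2)$ locally bounded, so a zero of $f_{\bfr,\xi}$ — interior, or at $t=0$ where $f_{\bfr,\xi}'(0)=0$ — would, being a minimum of $f_{\bfr,\xi}$, carry $f_{\bfr,\xi}=f_{\bfr,\xi}'=0$ there and hence $f_{\bfr,\xi}\equiv0$ by uniqueness for the ODE; thus $f_{\bfr,\xi}>0$ on $\overline{\R_+}$. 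For uniqueness of the non-negative minimizer I would pass to the density $\rho:=f^2$: on the (convex) set of admissible densities the change of variables is legitimate and
\begin{equation*}
\mathscr{E}_{\bfr,\xi}^{1\Dim}(f)=\int_0^\infty\Bigl(\frac{(\rho')^2}{4\rho}+(t-\xi)^2\rho-\bfr\rho+\frac{\bfr}{2}\rho^2\Bigr)\,\mathrm{d}t,
\end{equation*}
whose integrand is convex in $(\rho,\rho')$ — the perspective function $(\rho,p)\mapsto p^2/(4\rho)$ being jointly convex and $\rho\mapsto\tfrac\bfr2\rho^2$ strictly convex — so the functional is strictly convex along segments joining distinct densities; combined with attainment of the minimum this gives uniqueness of the minimizing $\rho$, hence of $f_{\bfr,\xi}=\sqrt\rho$. (This convexity already yields uniqueness of the non-negative minimizer for every $\bfr>\mu_1(\xi)$; the restriction $\bfr<1$ marks the surface-superconductivity window in which $f_{\bfr,\xi}$ carries the decay used above and is, moreover, the only positive solution of \eqref{FH14.19}, cf.\ \cite{P02,FHP11,CR14}.)

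The step I expect to be most delicate is the $L^2$/super-exponential decay of an \emph{arbitrary} bounded solution of \eqref{FH14.19}, since it genuinely needs the asymptotic ODE theory (the exponential dichotomy at infinity) or an Agmon-type argument rather than a soft variational bound; by contrast, the uniqueness statement becomes essentially transparent once the energy is rewritten in the density variable $\rho=f^2$.
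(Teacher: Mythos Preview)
The paper does not actually prove this lemma: it is stated as a collection of known facts, with the proofs deferred to \cite[Section~3 and Appendix]{P02}, \cite[Section~14.2]{FH}, and \cite{FHP11}. Your proposal is a faithful and correct reconstruction of those standard arguments---direct method with truncation for coercivity, maximum principle for the $L^\infty$ bound, the $\varepsilon\phi_\xi$ test for non-triviality, exponential dichotomy (or Agmon) for the $L^2$ decay of bounded solutions, ODE uniqueness at a zero for strict positivity, and convexity in the density variable $\rho=f^2$ for uniqueness---so there is nothing to compare against and no gap to flag.

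One small remark: your observation that the convexity-in-$\rho$ argument already yields uniqueness of the non-negative minimizer for every $\bfr>\mu_1(\xi)$, not just $\bfr<1$, is correct; the restriction $\bfr<1$ in the statement is there because that is the range used downstream (Lemma~\ref{prelim} and the lower bound in Theorem~\ref{thm:Uniqueness}), and because in the cited references $\bfr<1$ is where the stronger claim---uniqueness of the \emph{positive solution} of \eqref{FH14.19}, not just of the minimizer---is established.
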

We should notice that our conventions are slightly different from the ones considered in \cite{FH} (see for instance (3.9) in \cite{FH} and \eqref{DeGennes} in the present paper) and the ones considered in \cite{CR14} (see for instance the choice of $(\nabla+i\mathbf{A})$ for the linear part of the Ginzburg-Landau functional instead of the expression $(-i\nabla+\mathbf{A})$ we consider). The statements we give here have been adapted to our choices.

\begin{nota}\label{f0}
We recall that $f_{\xi_0}$ is defined above Theorem \ref{thm:mon} and we denote $f_0=f_{\xi_0}$.
\end{nota}

The proof of Theorem \ref{thm:Uniqueness} directly follows the approach presented in \cite{CR14} and is made in two steps consisting in obtaining an upper and a lower bound.

\begin{proof}[Proof of Theorem~\ref{thm:Uniqueness}]
We will only prove Theorem~\ref{thm:Uniqueness} for $\bfr\in (\Theta_0,1)$---the extension to $\bfr = 1$ following by continuity on both sides.
The continuity of the map $[0,\frac{\pi}{2}]\ni\nu\mapsto e(\bfr,\nu)$ is given in
\cite[Theorem 3.13]{FKP13} for all $\nu\in[0,\frac{\pi}{2}]$.
The continuity of $E_0^{1\Dim}$ is easier and is left to the reader.

\noindent\textbf{Upper bound.}
For $\nu=0$ and $\bfr\in (\Theta_0,1]$ (here the endpoint $\bfr =1$ can easily be included),
the estimate
\begin{align}\label{eq:1Dupper}
E(\bfr,\nu=0,\ell) \le 4\ell^2 E_0^{1\Dim}+o(\ell)
\end{align}
is obtained by considering the following trial state
\begin{equation}\label{trial1D}
\f(x)=\f(x_1,x_2,x_3)=f_0(x_1)e^{i\xi_0x_3},
\end{equation}
(defined for any $x\in \RRh$) suitably localized in order to satisfy the Dirichlet boundary condition. We omit the details.

\smallbreak
\noindent\textbf{Lower bound.}
The lower bound results from an energy decoupling through a ground state representation. It relies on some preliminary results related to the $1$D-functional $\mathcal{E}_{\bfr,\xi_0}^{1\Dim}$. In the following lemma we gather the preliminary properties that we need, see \cite[Section 3]{P02}, \cite[Section 14.2]{FH} and \cite[Section 3.2]{CR14} for details.
\begin{lem}\label{prelim}
For all $\bfr\in (\Theta_0,1)$, the following properties hold.
\begin{enumerate}[label=(\roman*)]
\item \label{f0pos} The function $f_0$ (introduced in Notation \ref{f0}) is strictly positive everywhere in $\mathbb{R}_+$.
\item \label{F0neg} The function $F_0$ defined by
\begin{equation}\label{F0}
F_0(x_1)=2\displaystyle{\int_0^{x_1}(y-\xi_0)f_0^2(y)}\,\mathrm{d}y,
\end{equation}
satisfies $F_0(+\infty)=0$ and is negative for all $x_1>0$.
\item \label{cost} The `cost function' $K_0$ defined as\dpo
\begin{equation}\label{CR2.18}
K_0(x_1)=f_0^2(x_1)+F_0(x_1),
\end{equation}
is positive on $\R_+$.
\end{enumerate}
\end{lem}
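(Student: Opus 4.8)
The plan is to derive the three properties from the Euler--Lagrange equation \eqref{varf0} for $f_0$ together with the first-order optimality of $\xi_0$ in the variable $\xi$, following \cite[Section~3]{P02}, \cite[Section~14.2]{FH} and \cite[Section~3.2]{CR14}. First, for item \ref{f0pos}: since $\bfr>\Theta_0=\inf_{\xi}\mu_1(\xi)$ there is $\xi_1$ with $\mu_1(\xi_1)<\bfr$, and testing $\mathscr{E}_{\bfr,\xi_1}^{1\Dim}$ against $\varepsilon\,u_{\xi_1}$ (with $u_{\xi_1}$ the normalized ground state of $H(\xi_1)$ and $\varepsilon$ small) shows $\mathscr{E}_{\bfr,\xi_1}^{1\Dim}(\varepsilon u_{\xi_1})=\varepsilon^2(\mu_1(\xi_1)-\bfr)+O(\varepsilon^4)<0$, hence $E_0^{1\Dim}<0$. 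By \eqref{energyf0} this forces $\|f_0\|_{L^4}>0$, so $f_0\not\equiv 0$ is a non-trivial bounded solution of \eqref{FH14.19} with $\xi=\xi_0$; by the last part of Lemma~\ref{prelim1D} this requires $\mu_1(\xi_0)<\bfr$, and the same lemma then yields that $f_0$ is the unique non-negative minimizer and is strictly positive on $(0,\infty)$. Strict positivity at $t=0$ follows from Cauchy uniqueness for \eqref{varf0}, since $f_0(0)=f_0'(0)=0$ would give $f_0\equiv 0$.

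For item \ref{F0neg}, the identity $F_0(+\infty)=0$ is precisely the stationarity of $\xi\mapsto E_{\bfr,\xi}^{1\Dim}$ at $\xi_0$. Because $\mu_1(\xi)\to\infty$ as $|\xi|\to\infty$, the set $\{\mu_1<\bfr\}$ is a bounded interval on which $E_{\bfr,\xi}^{1\Dim}<0$ while $E_{\bfr,\xi}^{1\Dim}=0$ outside it, so the minimizing $\xi_0$ is an interior point of $\{\mu_1<\bfr\}$, where $\xi\mapsto f_\xi$ is smooth (analytic perturbation theory together with uniqueness from Lemma~\ref{prelim1D}). The envelope/Feynman--Hellmann identity then gives
\[
0=\frac{d}{d\xi}\Big|_{\xi=\xi_0}E_{\bfr,\xi}^{1\Dim}
=\int_0^{\infty}\partial_\xi\big[(t-\xi)^2\big]\big|_{\xi=\xi_0}\,f_0^2\,dt
=-2\int_0^{\infty}(t-\xi_0)f_0^2\,dt=-F_0(+\infty),
\]
the weighted integral being finite since $f_0\in B^1(\mathbb{R}_+)$ (so $tf_0,f_0\in L^2$, hence $tf_0^2\in L^1$). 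Now $F_0(0)=0$, $F_0(+\infty)=0$ and $F_0'(x_1)=2(x_1-\xi_0)f_0^2(x_1)$ with $f_0>0$: if $\xi_0\le 0$ then $F_0'>0$ on $(0,\infty)$, which contradicts $F_0(0)=F_0(+\infty)=0$, so $\xi_0>0$; consequently $F_0$ strictly decreases on $(0,\xi_0)$ and strictly increases on $(\xi_0,\infty)$, and since its two boundary values are both $0$, we get $F_0<0$ on $(0,\infty)$.

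For item \ref{cost}, I would compute the second derivative of $K_0=f_0^2+F_0$. From $K_0'=2f_0f_0'+2(t-\xi_0)f_0^2$ and \eqref{varf0}, a direct calculation (completing the square in the first three resulting terms) gives
\[
K_0''=2\big(f_0'+(t-\xi_0)f_0\big)^2+2f_0^2\big(1-\bfr(1-f_0^2)\big)\ge 0,
\]
where the last inequality uses $\bfr<1$ and $0\le f_0^2\le 1$ from Lemma~\ref{prelim1D}. Hence $K_0$ is convex on $[0,\infty)$. Moreover $K_0(0)=f_0^2(0)>0$ by item \ref{f0pos}, while $K_0(+\infty)=0$ because $f_0(x_1)\to 0$ and $F_0(+\infty)=0$. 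A convex function on $[0,\infty)$ tending to $0$ at $+\infty$ must be non-increasing (otherwise convexity forces it to $+\infty$), so $K_0\ge 0$ everywhere; and $K_0$ cannot vanish at an interior point $x_1^{\ast}$, for then $K_0\equiv 0$ on $[x_1^{\ast},\infty)$, whence $K_0''\equiv 0$ and $f_0\equiv 0$ there, contradicting item \ref{f0pos}. Therefore $K_0>0$ on $\mathbb{R}_+$.

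The only genuinely delicate point is the justification of $F_0(+\infty)=0$: one must know that the minimizing $\xi_0$ lies in the interior of $\{\mu_1<\bfr\}$, that $\xi\mapsto f_\xi$ is differentiable there, and that the density $tf_0^2$ is integrable so that differentiation under the integral sign is legitimate. All of these are standard and are carried out in \cite{P02,FH,CR14}; everything else is elementary ODE analysis built on the Euler--Lagrange equation \eqref{varf0} and the bounds already recorded in Lemma~\ref{prelim1D}.
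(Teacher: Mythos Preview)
Your argument is correct and follows precisely the references \cite{P02,FH,CR14} that the paper itself cites in lieu of a proof; the paper does not give its own proof of this lemma but merely records the statements and points to those sources. Your treatment of item~\ref{cost} via the convexity computation $K_0''=2\big(f_0'+(t-\xi_0)f_0\big)^2+2f_0^2\big(1-\bfr(1-f_0^2)\big)\ge 0$ is exactly the argument from \cite[Section~3.2]{CR14}, so there is nothing to add.
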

In the particular case when $\nu=0$, the functional $\mathscr{E}_{\bfr,\nu,\ell}$ defined through \eqref{GL} has the following form\dpo
\begin{equation}\label{CR2.12}
\mathscr{E}_{\bfr,\nu=0,\ell}(\f)=\displaystyle{\int_{[-\ell,\ell]^2}\left(\int_0^{+\infty}\left\{|(-i\nabla-x_1\mathbf{e}_{3})\f|^2-\bfr|\f|^2+\frac{\bfr}{2}|\f|^4\right\}\mathrm{d}x_1\right)\mathrm{d}x_2\mathrm{d}x_3,}
\end{equation}
where $\mathbf{e}_{3}$ is the unit vector in the $x_3$ direction. Thanks to the property \ref{f0pos} of Lemma \ref{prelim}, to any function $\f$ we may associate a  function $v$ with the Ansatz\dpo
\begin{equation}\label{CR2.13}
\f(x_1,x_2,x_3)=f_0(x_1)e^{i\xi_0x_3}v(x_1,x_2,x_3).
\end{equation}
By density of the set of functions with compact support  in $\mathcal S_\ell$, it suffices to work with functions satisfying
\begin{equation}\label{condition}
\f(x_1,x_2,x_3)=0, \quad  \text{for} \ x_1 \ \text{sufficiently large}.
\end{equation} Considering \eqref{condition} and using the variational equation \eqref{energyf0} for $f_0$, integration by parts yields,
\begin{equation}\label{CR2.14}
\mathscr{E}_{\bfr,\nu=0,\ell}(\f)=4\ell^2E_0^{1\Dim}+\mathscr{E}_{0}(v),
\end{equation}
where $\mathscr{E}_{0}(v)$ is defined as
\begin{equation}\label{CR2.15}
\mathscr{E}_{0}(v)=\displaystyle{\int_{\mathcal{D}_\ell}
\ f_0^2(x_1)\left\{|\nabla v|^2-2(x_1-\xi_0)\mathbf{e}_{3}\cdot\mathbf{j}(v)+\frac{\bfr}{2}f_0^2(x_1)(1-|v|^2)^2\right\}\,\mathrm{d}x},
\end{equation}
and where $\mathbf{j}(v) = (j_1, j_2, j_3) $ is given by
\begin{equation}\label{supercurr}
\mathbf{j}(v)=\frac{i}{2}(v \overline{\nabla v}-\overline{v}\nabla v).
\end{equation}
The boundary terms vanish because the function $f_0$ satisfies the Neumann boundary condition at $x_1=0$ (and using \eqref{condition}).

Therefore, it suffices to prove the positivity of the reduced functional $\mathscr{E}_0$ given in \eqref{CR2.15}. We define the following field $\mathbf{F}_0=(0,F_0,0)$, with $F_0$ from \eqref{F0}.
We notice that
\begin{equation*}
\left(2 (x_1-\xi_0) f_0^2 \right)\mathbf{e}_{3}\cdot\mathbf{j}(v)=(\partial_{x_1}F_0) (v\overline{\partial_{x_3}v}-\overline{v}\partial_{x_3}v).
\end{equation*}
An integration by parts in the $x_1$ variable on the term involving $\mathbf{j}(v)$ (using \eqref{condition}), yields
\begin{equation*}
\displaystyle{\int_{\mathcal{D}_\ell}-2(x_1-\xi_0)f_0^2(x_1)\mathbf{e}_{3}\cdot \mathbf{j}(v)\,\mathrm{d}x}
=
\displaystyle{\int_{\mathcal{D}_\ell}F_0 \partial_{x_1}j_3(v)\,\mathrm{d}x}.
\end{equation*}
An integration by parts in the $x_3$ variable gives, for each fixed $x_1$ and $x_2$ (and using the Dirichlet boundary condition),
\begin{align*}
\frac{1}{2}\displaystyle{\int_{-\ell}^\ell \partial_{x_1}(v\overline{\partial_{x_3}v}-\overline{v}\partial_{x_3}v)\,\mathrm{d}x_3}
=
-i \int_{-\ell}^\ell \curl\, \mathbf{j}(v)\cdot \mathbf{e}_{2}\,\mathrm{d}x_3.
\end{align*}
Therefore, we have finally obtained
\begin{equation*}
\mathscr{E}_0(v)=\displaystyle{\int_{\mathcal{D}_\ell}
\ \left\{f_0^2(x_1)|\nabla v(x)|^2+F_0(x_1)\mu(v(x))+\frac{\bfr}{2}f_0^4(x_1)(1-|v(x)|^2)^2\right\}\mathrm{d}x,}
\end{equation*}
where $\mu(v)=\curl\, \mathbf{j}(v)\cdot \mathbf{e}_{2}$.
Using that $F_0$ is negative (by Lemma \ref{prelim}, \ref{F0neg}), and $|\mu(v)|\le
\left|\curl\,\mathbf{j}(v)
\right|\le |\nabla v|^2$, we get
\begin{align}
\mathscr{E}_0(v)&\ge\displaystyle{\int_{\mathcal{D}_\ell}
 \ \left(f_0^2(x_1)|\nabla v(x)|^2+F_0(x_1)|\mu(v(x))|\right)\mathrm{d}x} \nonumber \\
\label{CR2.17}
&\ge \int_{\mathcal{D}_\ell}
 \ \left(f_0^2(x_1)+F_0(x_1)\right)|\nabla v(x)|^2\,\mathrm{d}x  \nonumber \\
 &\ge 0,
\end{align}
where the last inequality follows from \ref{cost}
in Lemma \ref{prelim}.
This finishes the proof of Theorem~\ref{thm:Uniqueness}.
\end{proof}

\section{Ground state energy for general direction of the magnetic field}\label{Sec:Generalized}

In this section we will prove Theorem~\ref{thm:mon}. As stated in Lemma~\ref{lem-p-z(nu)} in Section 2, a similar monotonicity result is valid for the linear problem, i.e. for the spectral quantity $\zeta(\nu)$.
The proof of the monotonicity of $\nu \mapsto \zeta(\nu)$ is rather easy once a clever change of variables is implemented. Our proof of Theorem~\ref{thm:mon} is very much inspired by the analysis of the linear problem and uses the same change of variables performed in the proof of the monotonicity of the lowest eigenvalue (see \cite[Theorem 3.1]{LP00} and \cite{HM}).

\subsection{Generalization of the initial problem in $3$D}

Let $\nu\in[0,\frac{\pi}{2}]$, $\ell>0$ and $\ell_3>0$. For $\beta\in[0,\frac{\pi}{2})$, we introduce the set
\begin{equation*}
\mathcal{D}_{\ell,\ell_3,\beta}=\big\{x=(x_1,x_2,x_3)\in\R^3~:~ x_1>0,\;\; |x_2-x_1\tan\beta|<\ell,\;\; |x_3|<\ell_3\big\},
\end{equation*}
for which we have the following correspondence with the set defined in \eqref{Dinitial}
\begin{equation*}
\mathcal{D}_\ell=\mathcal{D}_{\ell,\ell,0}.
\end{equation*}
We consider the magnetic potential $\mathbf{A}_\nu$ defined on $\mathbb{R}^3_+$ by \eqref{Av},
for which we recall that the associated magnetic field is the constant unit vector that makes an angle $\nu$ with the $x_2x_3$ plane (see \eqref{Bv}).
\begin{defn}\label{def:Energy}
We consider the following reduced Ginzburg-Landau type energy functional
\begin{equation}\label{GLgen}
\mathscr{E}_{\bfr,\nu,\beta,\ell,\ell_3}(\f)=\displaystyle{\int_{\mathcal{D}_{\ell,\ell_3,\beta}}\left(|(-i\nabla+\mathbf{A}_\nu)\f|^2-\bfr|\f|^2+\frac{\bfr}{2}|\f|^4\right)\,\mathrm{d}x},
\end{equation}
for $\f$ in the space
\begin{multline*}\label{spaceS}
\mathcal{S}_{\ell,\ell_3,\beta}=\Big\{\f \in {\color{black}
L^2(\mathcal{D}_{\ell,\ell_3,\beta})}~:~ (-i\nabla+\mathbf{A}_\nu)\f\in L^2(\mathcal{D}_{\ell,\ell_3,\beta}, \C^3),\\
\f=0 \;\; \text{on} \;\; \partial(\mathcal{D}_{\ell,\ell_3,\beta})\backslash \{x_1=0\}
\Big\}.
\end{multline*}
Furthermore, we define
\begin{align}
E(\bfr,\nu,\beta,\ell,\ell_3)= \inf_{\f \in \mathcal{S}_{\ell,\ell_3,\beta}} \mathscr{E}_{\bfr,\nu,\beta,\ell,\ell_3}(\f).
\end{align}
\end{defn}

\begin{rem}\label{rem:existence}
We recall what we mentioned in Remark \ref{rem:existenceInitial}. The existence of the limit
\begin{align}
\lim_{\ell\rightarrow \infty} \frac{1}{4\ell^2} E(\bfr,\nu,\beta=0,\ell,\ell)=e(\bfr,\nu,\beta=0),
\end{align}
was proved in \cite{AS07} and \cite{FKP13} (see in particular \cite[Theorem 3.9]{FKP13} with \cite{FK13}).
Following exactly the same steps of the proof of Theorem 3.9 in \cite{FKP13}, we can actually easily show that for all sequence $\{ (\ell^{(n)}, \ell_3^{(n)}) \}_{n}$ satisfying
\begin{equation}\label{limit-1}
0< \ell^{(n)} \underset{n\rightarrow+\infty}{\longrightarrow} +\infty,\quad\text{and}\quad
C^{-1} < \frac{\ell^{(n)}}{\ell_3^{(n)}} < C \quad \forall n,
\end{equation}
with a fixed constant $C>0$, we have that the following limit
\begin{align}
e(\bfr,\nu,\beta=0):= \lim_{n\rightarrow +\infty} \frac{1}{4\ell^{(n)}\ell_3^{(n)}} E(\bfr,\nu,\beta=0,\ell^{(n)},\ell_3^{(n)})
\end{align}
exists, and is independent of the choice of sequence $\{ (\ell^{(n)}, \ell_3^{(n)}) \}_{n}$.
\end{rem}

We will need the following result which gives in particular the existence of a minimizer and that all the minimizers have a good decay at infinity in the transverse variable $x_1$.
\begin{lem}\label{decaytransverse}
Suppose that $\bfr\in[\Theta_0,1]$ and $\beta\in\left[0,\frac{\pi}{2}\right)$ are fixed given constants. For all $\nu\in\left[0,\frac{\pi}{2}\right]$, $\ell>0$ and $\ell_3>0$, the functional $\mathscr{E}_{\bfr,\nu,\beta,\ell,\ell_3}$ in \eqref{GLgen} has a minimizer, and any minimizer $\underline{\f}=\varphi_{\bfr,\nu,\beta,\ell,\ell_3}$ satisfies
\begin{equation}\label{infinitynorm}
\mathscr{E}_{\bfr,\nu,\beta,\ell,\ell_3}(\underline{\f})=E(\bfr,\nu,\beta,\ell,\ell_3), \quad \Vert \underline{\f}\Vert_{L^\infty(\mathcal{D}_{\ell,\ell_3,\beta})}\le 1.
\end{equation}
and
\begin{equation}\label{eq:GLeq}
\displaystyle{\displaystyle{\int_{\mathcal{D}_{\ell,\ell_3,\beta}} {\color{black}\Big\{ }| (-i\nabla+\mathbf{A}_\nu)\underline{\f}|^2-\bfr|\underline{\f}|^2+\frac{\bfr}{2}|\underline{\f}|^4\,{\color{black}\Big\} }\mathrm{d}x}=-\frac{\bfr}{2}\displaystyle{\int_{\mathcal{D}_{\ell,\ell_3,\beta}}|\underline{\f}|^4}\,\mathrm{d}x}.
\end{equation}

Furthermore, there exists a constant $C(\bfr,\beta)$ such that if $\nu\in\left[0,\frac{\pi}{2}\right]$, $\ell>0$ and $\ell_3>0$, any minimizer $\underline{\f}$ satisfies
\begin{equation}\label{3.14}
\displaystyle{\int_{\mathcal{D}_{\ell,\ell_3,\beta}\cap\{x_1>4\}}\frac{x_1}{(\ln x_1)^2}\Big(|(-i\nabla+\mathbf{A}_\nu)\underline{\f}|^2+|\underline{\f}|^2+x_1^2|\underline{\f}|^4\Big)\,\mathrm{d}x}\le C(\bfr,\beta)\ell\ell_3.
\end{equation}
\end{lem}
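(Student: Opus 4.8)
\textbf{Proof proposal for Lemma~\ref{decaytransverse}.}

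The plan is to establish the three claims in order of increasing difficulty, each building on the previous. First, for the existence of a minimizer and the bounds in \eqref{infinitynorm}, I would work on the bounded domain $\mathcal{D}_{\ell,\ell_3,\beta}$ with Dirichlet conditions on the lateral boundary and Neumann on $\{x_1=0\}$. Since the domain has finite measure and the magnetic Sobolev space embeds compactly into $L^2$ (and into $L^4$ by the diamagnetic inequality combined with standard Sobolev embedding in three dimensions), the functional $\mathscr{E}_{\bfr,\nu,\beta,\ell,\ell_3}$ is weakly lower semicontinuous and coercive on $\mathcal{S}_{\ell,\ell_3,\beta}$; the direct method then yields a minimizer $\underline{\f}$. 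The $L^\infty$ bound $\Vert\underline{\f}\Vert_\infty\le 1$ follows by the usual truncation argument: replacing $\underline{\f}$ by $\underline{\f}\min(1,|\underline{\f}|^{-1})$ does not increase the magnetic Dirichlet term (Kato-type inequality) and strictly decreases the potential part wherever $|\underline{\f}|>1$, contradicting minimality. The identity \eqref{eq:GLeq} is just the Euler--Lagrange equation tested against $\underline{\f}$ itself: the minimizer is a weak solution of $(-i\nabla+\mathbf{A}_\nu)^2\underline{\f}=\bfr(1-|\underline{\f}|^2)\underline{\f}$ with the stated boundary conditions, and pairing with $\overline{\underline{\f}}$ and integrating gives $\int|(-i\nabla+\mathbf{A}_\nu)\underline{\f}|^2-\bfr|\underline{\f}|^2+\bfr|\underline{\f}|^4=0$, which rearranges to \eqref{eq:GLeq}.

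The substantive part is the weighted estimate \eqref{3.14}, and this is where I expect the main obstacle to lie. The strategy is an Agmon-type argument adapted to the slowly growing weight $w(x_1)=x_1/(\ln x_1)^2$ on $\{x_1>4\}$. The key structural input is that the magnetic potential $\mathbf{A}_\nu$ in \eqref{Av} has third component $-x_1\cv+x_2\sv$, so on the region $|x_2-x_1\tan\beta|<\ell$ one has $|x_1\cv-x_2\sv|\ge x_1(\cv-\tan\beta\,\sv)-\ell\sv$, which (for $\beta$ fixed with $\tan\beta<\cot\nu$, or directly for $\nu$ ranging over $[0,\pi/2]$ with the constant allowed to depend on $\beta$) grows linearly in $x_1$. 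Hence the effective potential $|(-i\nabla+\mathbf{A}_\nu)\underline{\f}|^2$ controls $\sim x_1^2|\underline{\f}|^2$ far out, while the competing term $\bfr|\underline{\f}|^2\le|\underline{\f}|^2$ is bounded; the excess is what powers the localization. I would multiply the equation by $w\,\overline{\underline{\f}}$, integrate over $\mathcal{D}_{\ell,\ell_3,\beta}\cap\{x_1>4\}$, and handle the commutator $[(-i\nabla+\mathbf{A}_\nu)^2,w]$: since $w'=O(1/\ln x_1)$ and $w''=O(1/(x_1\ln x_1))$ are small, the IMS-type error terms are absorbed into the main positive term once $x_1$ is large, at the cost of a boundary contribution near $\{x_1=4\}$ bounded by $C\Vert\underline{\f}\Vert_{L^2(\{x_1\le 5\})}^2\le C\ell\ell_3$ (using $\Vert\underline{\f}\Vert_\infty\le1$ and $|\{x_1\le5\}\cap\mathcal{D}_{\ell,\ell_3,\beta}|\le C\ell\ell_3$). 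This yields $\int w\big(|(-i\nabla+\mathbf{A}_\nu)\underline{\f}|^2 + c\,x_1^2|\underline{\f}|^2\big)\le C(\bfr,\beta)\ell\ell_3$.

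Finally, to recover the full left-hand side of \eqref{3.14}, I would upgrade the $x_1^2|\underline{\f}|^2$ control to the quartic term $x_1^2|\underline{\f}|^4$ for free via $|\underline{\f}|^4\le|\underline{\f}|^2$ (the $L^\infty$ bound), giving $\int w\,x_1^2|\underline{\f}|^4\le\int w\,x_1^2|\underline{\f}|^2$; and the plain $|\underline{\f}|^2$ term is dominated by $x_1^2|\underline{\f}|^2$ on $\{x_1>4\}$ up to the constant $1/16$. Rescaling constants then gives \eqref{3.14} in the stated form. The delicate point throughout is making the commutator bookkeeping with the logarithmic weight rigorous and checking that the linear lower bound on $|x_1\cv-x_2\sv|$ survives uniformly in $\nu\in[0,\pi/2]$ when $\beta$ is fixed (for $\nu$ near $\pi/2$ one must use that $\beta<\pi/2$ is fixed so $\tan\beta$ is bounded, keeping $\cv-\tan\beta\,\sv$ away from being useless, or alternatively treat the near-tangential directions using the $x_1$-derivative term directly); this is precisely why the constant $C(\bfr,\beta)$ is allowed to depend on $\beta$ and not be uniform as $\beta\to\pi/2$.
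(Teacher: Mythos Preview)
The paper does not actually prove this lemma: it cites \cite[Theorem~3.6]{FKP13} (and \cite{P02}) for the case $\beta=0$, $\ell=\ell_3$ and simply remarks that nothing changes for general $\beta,\ell_3$. So your sketch goes considerably further than the paper itself, and the parts concerning existence, the $L^\infty$ bound, and the identity \eqref{eq:GLeq} are correct and standard.

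The mechanism you propose for the weighted estimate \eqref{3.14}, however, has a genuine gap. You argue that because $|A_{\nu,3}|=|x_1\cos\nu-x_2\sin\nu|$ grows linearly in $x_1$ on $\mathcal{D}_{\ell,\ell_3,\beta}$, the magnetic kinetic density $|(-i\nabla+\mathbf{A}_\nu)\underline{\f}|^2$ controls $x_1^2|\underline{\f}|^2$ pointwise. This is false for complex $\underline{\f}$: since $A_{\nu,3}$ is independent of $x_3$, writing $\underline{\f}=e^{-iA_{\nu,3}x_3}\rho$ with $\rho$ real gives $|(-i\partial_3+A_{\nu,3})\underline{\f}|^2=|\partial_3\rho|^2$, with no $A_{\nu,3}^2$ term at all. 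There is no pointwise confinement; the only robust lower bound far from the boundary is the full--space Landau estimate $\int|(-i\nabla+\mathbf{A}_\nu)\psi|^2\ge\int|\psi|^2$ of Proposition~\ref{prop:NoBoundary}, which yields a constant gap, not $x_1^2$. Your subsequent step, deducing $\int w\,x_1^2|\underline{\f}|^4$ from $\int w\,x_1^2|\underline{\f}|^2$ via $|\underline{\f}|^4\le|\underline{\f}|^2$, therefore rests on an unavailable estimate. This also makes the worry about uniformity in $\nu$ moot: the Landau bound holds with constant $1$ for every $\nu$.

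The argument in \cite{P02,FKP13} that the paper invokes proceeds instead by the IMS identity applied to the Euler--Lagrange equation: for a weight $\chi=\chi(x_1)$ supported in $\{x_1>4\}$ one gets, using Proposition~\ref{prop:NoBoundary} on $\chi\underline{\f}$,
\[
(1-\bfr)\int\chi^2|\underline{\f}|^2+\bfr\int\chi^2|\underline{\f}|^4\le\int|\chi'|^2|\underline{\f}|^2 .
\]
The term $x_1^2|\underline{\f}|^4$ in \eqref{3.14} comes from the \emph{nonlinear} contribution $\bfr\int\chi^2|\underline{\f}|^4$ with $\chi^2\sim x_1^3/(\ln x_1)^2$, once the right--hand side $\int|\chi'|^2|\underline{\f}|^2\sim\int\frac{x_1}{(\ln x_1)^2}|\underline{\f}|^2$ has been bounded in a previous bootstrap step (with the lower weight $\chi^2\sim x_1/(\ln x_1)^2$ and $|\chi'|^2$ bounded). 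The logarithmic factor is precisely what makes this two--step iteration close, uniformly for $\bfr\in[\Theta_0,1]$. If you rewrite your sketch with this mechanism in place of the pointwise $|A_3|^2$ heuristic, the argument goes through.
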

\begin{proof}

This statement is proved in \cite[ Theorem 3.6]{FKP13} (see also \cite{P02}) for $\beta=0$ and $\ell=\ell_3$.
The fact that we have a different domain $\mathcal{D}_{\beta,\ell,\ell_3}$ depending on the two additional parameters $\beta$ and $\ell_3$ does not change anything because they are both fixed in the proof.

We omit the details.
\end{proof}
The following result is a key lemma which generalizes Theorem 3.9 in \cite{FKP13}. The proof of this result strongly relies on the generalized result presented in Remark \ref{rem:existence}. We refer to \cite{FKP13} (and \cite{FK13}) for the technical details.
\begin{lem}\label{key}
Let $C>0$ and let $\{ (\ell^{(n)}, \ell_3^{(n)}) \}_{n}$ be a sequence satisfying \eqref{limit-1}.
We have that
\begin{equation*}
e(\bfr,\nu,\beta):=\underset{n\rightarrow+\infty}{\lim} \ \frac{1}{4\ell^{(n)}\ell_3^{(n)}}E(\bfr,\nu,\beta,\ell^{(n)},\ell_3^{(n)})
\end{equation*}
exists, is independent of the choice of sequence $\{ (\ell^{(n)}, \ell_3^{(n)}) \}_{n}$ and moreover is independent of $\beta$.
\end{lem}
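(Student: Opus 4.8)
\textbf{Proof plan for Lemma~\ref{key}.}
The plan is to reduce the statement to the already-known case $\beta=0$ (Remark~\ref{rem:existence}) by a change of variables that straightens the slanted domain $\mathcal{D}_{\ell,\ell_3,\beta}$ into a right domain $\mathcal{D}_{\ell,\ell_3,0}$, while keeping track of how the energy functional transforms. Concretely, I would introduce the shear $\Phi_\beta: (x_1,x_2,x_3)\mapsto(x_1, x_2 - x_1\tan\beta, x_3)=:(y_1,y_2,y_3)$, which maps $\mathcal{D}_{\ell,\ell_3,\beta}$ bijectively onto $\mathcal{D}_{\ell,\ell_3,0}$ and has Jacobian~$1$. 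Pulling back a test function $\f$ and computing $(-i\nabla+\mathbf{A}_\nu)\f$ in the new coordinates produces a new magnetic Schr\"odinger operator $(-i\nabla_y + \widetilde{\mathbf{A}})^2$ on the straight domain; the crucial point is that the transformed vector potential $\widetilde{\mathbf{A}}$ still generates a \emph{constant} magnetic field, of the same unit length but possibly making a different angle $\nu'$ with the boundary plane. (This is exactly the computation underlying the monotonicity of $\zeta(\nu)$ in \cite{LP00,HM}, which the authors have already flagged as their inspiration.) After a further gauge transformation $\f\mapsto e^{i\chi}\f$ to put $\widetilde{\mathbf{A}}$ in the standard form $\mathbf{A}_{\nu'}$, one obtains, for every admissible $\ell,\ell_3$,
\begin{equation*}
E(\bfr,\nu,\beta,\ell,\ell_3) = E(\bfr,\nu',0,\ell,\ell_3),
\end{equation*}
where $\nu'=\nu'(\nu,\beta)$ depends only on $\nu$ and $\beta$; in particular $\nu'(\nu,0)=\nu$.

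Given such an identity, the lemma would follow almost immediately. For a sequence $\{(\ell^{(n)},\ell_3^{(n)})\}_n$ satisfying \eqref{limit-1}, dividing by $4\ell^{(n)}\ell_3^{(n)}$ and passing to the limit, the right-hand side converges by Remark~\ref{rem:existence} (applied to the angle $\nu'$ and $\beta=0$) to $e(\bfr,\nu',\beta=0)$, which is independent of the choice of sequence. This gives existence of $e(\bfr,\nu,\beta)$ and its independence of the sequence. To get the independence of $\beta$, one uses that $\nu'(\nu,\beta)$ runs over some interval of angles as $\beta$ varies, but together with Theorem~\ref{thm:Uniqueness} / the known continuity and the identity $e(\bfr,\nu,\beta)=e(\bfr,\nu'(\nu,\beta),0)$ one must check $\nu'(\nu,\beta)$ is in fact such that $e(\bfr,\nu'(\nu,\beta),0)=e(\bfr,\nu,0)$. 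The cleanest route: the transformation $\Phi_\beta$ is reversible, so applying it to the straight problem with angle $\nu$ and the \emph{inverse} shear shows $e(\bfr,\nu,0)=e(\bfr,\nu,\beta)$ once one verifies that composing the shear with the gauge change fixes the magnetic field up to rotation in the $x_2x_3$-plane — and rotations in the boundary plane leave $e(\bfr,\nu,\cdot)$ invariant by a trivial change of variables in $(x_2,x_3)$. Hence $\nu'$ can be taken equal to $\nu$, and $e(\bfr,\nu,\beta)=e(\bfr,\nu,0)$ for all $\beta\in[0,\frac{\pi}{2})$.

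The main obstacle I anticipate is not the limit argument — which is a verbatim repetition of the thermodynamic-limit machinery of \cite{FKP13} once the domains are rectangular, precisely why the authors say they "refer to \cite{FKP13} for the technical details" — but rather the bookkeeping in the change of variables: one must verify that the sheared-and-gauged operator is genuinely of the form $(-i\nabla+\mathbf{A}_{\nu'})^2$ with $\mathbf{A}_{\nu'}$ of the prescribed shape \eqref{Av}, that the Neumann/Dirichlet boundary conditions are preserved (the face $\{x_1=0\}$ is fixed by $\Phi_\beta$, so the Neumann face is untouched, and the lateral faces $\{|x_2-x_1\tan\beta|=\ell\}$ map to $\{|y_2|=\ell\}$, preserving Dirichlet), and that the quartic and $\bfr|\f|^2$ terms are unchanged since $|\f|$ and the volume element are invariant. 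A secondary subtlety is that the decay estimate \eqref{3.14} of Lemma~\ref{decaytransverse} (needed to control boundary layers in the limit) must survive the change of variables; but since $\Phi_\beta$ distorts $x_1$ trivially and the other coordinates only by bounded amounts on the relevant region, the weighted estimate transfers with a constant depending only on $\beta$, consistent with the $C(\bfr,\beta)$ already appearing there. Once these points are checked, the statement reduces to Remark~\ref{rem:existence} as claimed.
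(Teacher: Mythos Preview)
Your approach has a genuine gap at the very first step. The shear $\Phi_\beta:(x_1,x_2,x_3)\mapsto(x_1,x_2-x_1\tan\beta,x_3)$ is volume-preserving but \emph{not} an isometry, so the kinetic term $|(-i\nabla_x+\mathbf A_\nu)\f|^2$ does not transform into $|(-i\nabla_y+\widetilde{\mathbf A})\psi|^2$ for any vector potential $\widetilde{\mathbf A}$. Writing the derivatives out, $\partial_{x_1}=\partial_{y_1}-\tan\beta\,\partial_{y_2}$, so the first component squared produces the cross term $-2\tan\beta\,\mathrm{Re}(\partial_{y_1}\psi\,\overline{\partial_{y_2}\psi})$; equivalently, the pushed-forward operator is a magnetic Laplacian for a \emph{non-Euclidean} flat metric. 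No gauge transformation can remove this metric distortion, and no rotation in the $x_2x_3$-plane helps either, since rotations there commute with the shear only trivially. Hence the identity $E(\bfr,\nu,\beta,\ell,\ell_3)=E(\bfr,\nu',0,\ell,\ell_3)$ you aim for simply does not hold. Your appeal to \cite{LP00,HM} is also off target: the change of variables used there (and reproduced in Section~\ref{Sec:NewCoords}) is a rotation followed by a dilation, and the resulting functional \eqref{newGLH} is explicitly \emph{anisotropic} (note the coefficient $\tan^2\nu$ in front of $|\partial_2\THfder|^2$); it is not of the standard Ginzburg--Landau form either.

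The paper takes a completely different, purely geometric route that avoids any change of variables. One first truncates in the normal direction at height $\tilde\ell=\ell^{1/2}$ and uses the decay estimate \eqref{3.14} with an IMS-type localization to show that replacing $E(\bfr,\nu,\beta,\ell,\ell_3)$ by the corresponding Dirichlet energy on the finite slab $\{0<x_1<\tilde\ell\}$ costs only $o(\ell\ell_3)$. On this slab the slanted domain $\mathcal{D}_{\ell,\ell_3,\beta}^{\tilde\ell}$ is sandwiched between two \emph{straight} slabs $\mathcal{D}_{\ell\pm\ell_*,\ell_3,0}^{\tilde\ell}$ with $\ell_*=(\tan\beta)\ell^{1/2}=o(\ell)$, which gives two-sided inequalities for the Dirichlet energies by monotonicity in the domain. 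Passing to the limit and invoking Remark~\ref{rem:existence} for the straight problem then yields both existence of the limit and its independence of~$\beta$.
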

Thanks to Lemma \ref{key}, $e(\bfr,\nu,\beta)$ is independent of $\beta$. Hence we can omit $\beta$ from the notation and write
\begin{equation}\label{eq:Limit1bis}
e(\bfr,\nu)= e(\bfr,\nu,\beta)=e(\bfr,\nu,0).
\end{equation}

\begin{proof}[Proof of Lemma \ref{key}]
We fix $C>0$ and choose a sequence $\{ (\ell^{(n)}, \ell_3^{(n)}) \}_{n}$ such that \eqref{limit-1} holds.
For convenience, below we do not systematically indicate the dependence on $n$ in the notation.

For all $\tilde{\ell}>0$, we define the following box
$$\mathcal{D}_{\ell,\ell_3 ,\beta}^{\tilde{\ell} }=\{x=(x_1,x_2,x_3)\in\mathcal{D}_{\ell ,\ell_3 ,\beta}~:~ x_1\in(0,\tilde{\ell} )\},
$$
and for all $\ell>0$, $\ell_3>0$, $\nu\in\left(0,\frac{\pi}{2}\right)$ and $\beta\in\left[0,\frac{\pi}{2}\right)$, we denote by $E^{\mathsf{Dir}}_{\tilde{\ell}}(\bfr,\nu,\beta,\ell,\ell_3)$
the minimal energy of the functional $\mathscr{E}_{\bfr,\nu,\beta,\ell,\ell_3}$
for which the set of test functions is given by all $\varphi\in \mathcal S_{\ell,\ell_3,\beta}$, such that $\varphi$ satisfies  the following Dirichlet condition
$$\varphi=0 \;\; \text{on} \;\; \partial\mathcal{D}_{\ell,\ell_3,\beta}^{\tilde{\ell}}\backslash\{x_1=0\}, \q  \text{and} \; \varphi\ \text{extended by} \ 0 \ \text{outside of $\mathcal D_{\ell,\ell_3,\beta}^{\tilde\ell}$}.
$$
We are going to give upper bound and lower bounds on $E(\bfr,\nu,\beta,\ell ,\ell_3 )$ involving the quantity $E^{\mathsf{Dir}}_{\tilde{\ell}}(\bfr,\nu,\beta,\ell,\ell_3)$.

The upper bound is obvious since, by inclusion of the variational spaces, we have
\begin{equation}\label{Dir1}
E^{\mathsf{Dir}}_{\tilde{\ell} }(\bfr,\nu,\beta,\ell ,\ell_3 )\ge E(\bfr,\nu,\beta,\ell ,\ell_3 ).
\end{equation}

To obtain a lower bound on $E(\bfr,\nu,\beta,\ell,\ell_3 )$, we start by choosing $\tilde{\ell}$ as follows
\begin{equation}\label{eq:varho}
\tilde{\ell} =\ell^{\varrho},
\end{equation}
where $\varrho>0$ is a constant which will be chosen later on. We consider a real number $\tilde{\ell}_{-}$ such that
\begin{equation}\label{cond_tilde}
4\le \tilde{\ell}_{-}<\tilde{\ell} \quad \text{with} \quad |\tilde{\ell}-\tilde{\ell}_{-}|=R(\ell),\end{equation} where $R(\ell)$ is a strictly positive quantity which can depend on $\ell$ and which will be given below. We consider two smooth functions $\chi_1$ and $\chi_2$ defined on $\RRh$ and constituting a partition of unity such that for all $x\in \RRh\supset\mathcal{D}_{\ell,\ell_3,\beta}
$:
\begin{equation*}
\begin{split}
\chi_1(x)=\chi_1(x_1,x_2,x_3) =\left\{\begin{array}{ccl} 1 & \text{if} & x_1\in[0,\tilde{\ell}_{-}],\\
0 & \text{if} & x_1\in[\tilde{\ell}
,+\infty),
\end{array}\right.
\end{split}
\end{equation*}
and
\begin{equation*}
\begin{split}\chi_2(x)=\chi_2(x_1,x_2,x_3) =\left\{\begin{array}{ccc} 0 & \text{if} & x_1\in[0,\tilde{\ell}_{-}],\\
1 & \text{if} & x_1\in[\tilde{\ell}
,+\infty),
\end{array}\right.
\end{split}
\end{equation*}
with
\begin{equation*}
\chi_{1}^2+\chi_{2}^2=1 \quad \text{on} \ \RRh,
\end{equation*}
and
\begin{equation}\label{cutoff_aux}
\underset{x\in\RRh}{\sup}|\nabla\chi_{j}(x)|\le \frac{C}{R(\ell)}, \quad \text{for all} \ j\in \{1,2\}.
\end{equation}
For all $\varphi\in \mathcal{S}_{\ell,\ell_3,\beta}$, using the fact that $$\displaystyle{\int_{\mathcal{D}_{\ell,\ell_3,\beta}}(\chi_j^2(x)-\chi_j^4(x))|\varphi(x)|^4\,\mathrm{d}x}\ge 0
$$
for all $j\in\{1,2\}$ (since $0\le\chi_j(x)\le 1$), we have the following standard decomposition formula (see for instance \cite[Section 5.1]{FKP13})
\begin{multline}\label{aux_lower}
\mathscr{E}_{\bfr,\nu,\beta,\ell,\ell_3}(\varphi)\ge \mathscr{E}_{\bfr,\nu,\beta,\ell,\ell_3}(\chi_1\varphi)+\mathscr{E}_{\bfr,\nu,\beta,\ell,\ell_3}(\chi_2\varphi)\\-\Vert \varphi|\nabla\chi_1|\Vert^2_{L^2(\RRh)}-\Vert \varphi|\nabla\chi_2|\Vert^2_{L^2(\RRh)}.
\end{multline}
To obtain a lower bound to $E(\bfr,\nu,\beta,\ell ,\ell_3 )$, we are going to use the \eqref{aux_lower}  by giving an estimate on each term of the right hand side of the inequality.

On one hand we have, by definition 
and using the property of $\chi_1$, that for all $\f\in \mathcal{S}_{\ell,\ell_3,\beta}$
\begin{equation}\label{mainlower}
\displaystyle{\int_{\mathcal{D}_{\ell,\ell_3,\beta}
}\left(|(-i\nabla+\mathbf{A}_\nu)\chi_1\f|^2-\bfr|\chi_1\f|^2+\frac{\bfr}{2}|\chi_1\f|^4\right)\,\mathrm{d}x}\ge E^{\mathsf{Dir}}_{\tilde{\ell} }(\bfr,\nu,\beta,\ell ,\ell_3).
\end{equation}
On the other hand, using Proposition~\ref{prop:NoBoundary} (where $\chi_2\f$ is extended by zero to a regular function on all of $\R^3_+\supset\mathcal{D}_{\ell,\ell_3,\beta}
$), we have
\begin{align}\label{positif_lower}
\int_{\mathcal{D}_{\ell,\ell_3,\beta}
}\left(|(-i\nabla+\mathbf{A}_\nu)\chi_2\f|^2-\bfr|\chi_2\f|^2+\frac{\bfr}{2}|\chi_2\f|^4\right)\,\mathrm{d}x \geq 0,
\end{align}
since $\bfr \leq 1$.

It remains to give an estimate on the remainders of \eqref{aux_lower}. Considering the support of the functions $|\nabla\chi_j|$ ($j\in\{1,2\}$), and using \eqref{cutoff_aux}, we have the following inequality
\begin{equation}\label{Restemin_aux}
\underset{j\in\{1,2\} }{\sum}\Vert \varphi|\nabla\chi_j|\Vert^2_{L^2(\RRh)}\le \frac{C^2}{R(\ell)^2}\displaystyle{\int_{\tilde{\ell}_-\le x_1\le \tilde{\ell}}|\f|^2\,\mathrm{d}x}.
\end{equation}
To estimate $\displaystyle{\int_{\tilde{\ell}_-\le x_1\le \tilde{\ell}}|\f|^2\,\mathrm{d}x}$, we use Lemma \ref{decaytransverse} which gives in particular that for any
$\gamma\in[1,2)$ there exists a constant $C(\bfr,\beta,\gamma)>0$ (which only depends on $\bfr$, $\beta$ and $\gamma$) such that:
\begin{equation}\label{decay_aux}
\displaystyle{\int_{\mathcal{D}_{\ell,\ell_3,\beta}\cap\{x_1>4\}}x_1^\gamma|\underline{\f}|^2\,\mathrm{d}x}\le C(\bfr,\beta,\gamma)\ell\ell_3.
\end{equation}
Using \eqref{decay_aux} for $\gamma$ in the form $\gamma=1-\epsilon$ with $\epsilon\in(0,1)$, we have that there exist a constant $C(\bfr,\beta)>0$ (also depending the choice of $\epsilon$)
such that
\begin{equation}\label{reste_aux}
\displaystyle{\int_{\tilde{\ell}_-\le x_1\le \tilde{\ell}}|\f|^2\,\mathrm{d}x}\le C(\bfr,\beta)\frac{\ell\ell_3}{\ell^{\varrho(1-\epsilon)}}.
\end{equation}
We choose
\begin{equation}\label{eq:tildel2new}
0<R(\ell)=R<2 \quad \text{and} \quad \varrho=\frac{1}{2},
\end{equation}
where $R$ is a fixed constant independent of $\ell$ and where we recall that $\varrho$ is introduced in \eqref{eq:varho}. Thus, we are lead to consider
\begin{equation}\label{eq:tildel2}
\tilde{\ell} =\ell^{1/2}.
\end{equation}
With the choices given by \eqref{eq:tildel2new}, we have $\tilde{\ell}_-=\tilde{\ell}+o(\tilde{\ell})=\ell^{1/2}+o(\ell^{1/2})$. Therefore, we have with \eqref{reste_aux} that
\begin{equation}\label{Restemin_aux_bis}
\displaystyle{\int_{\tilde{\ell}_-\le x_1\le \tilde{\ell}}|\f|^2\,\mathrm{d}x}\le C(\bfr,\beta)\ell^{1/2+\epsilon/2}\ell_3,
\end{equation}
which finally gives
\begin{equation}\label{Restemin}
\underset{j\in\{1,2\} }{\sum}\Vert \varphi|\nabla\chi_j|\Vert^2_{L^2(\RRh)}\le C(\bfr,\beta)\ell^{1/2-\epsilon/2}\ell_3,
\end{equation}
Using \eqref{aux_lower} with a minimizer
$\underline{\f}=\f_{\bfr,\nu,\beta,\ell,\ell_3}$ of the functional $\mathscr{E}_{\bfr,\nu,\beta,\ell,\ell_3}$, and the estimates \eqref{mainlower}, \eqref{positif_lower} and \eqref{Restemin}, we obtain \begin{equation}\label{Dir2}
E^{\mathsf{Dir}}_{\tilde{\ell} }(\bfr,\nu,\beta,\ell ,\ell_3 )+o(\ell\ell_3)\le E(\bfr,\nu,\beta,\ell ,\ell_3 ).
\end{equation}
Relying on \eqref{Dir1} and \eqref{Dir2}, we can deal with $E(\bfr,\nu,\beta,\ell ,\ell_3 )$ using $E^{\mathsf{Dir}}_{\tilde{\ell} }(\bfr,\nu,\beta,\ell ,\ell_3 )$ up to a remainder in $o(\ell\ell_3 )$.
We denote by $\ell_*$ the length of the segment joining the two points $(x_1=\tilde{\ell},x_2=\ell,x_3=0)$ and $(x_1=\tilde{\ell},x_2=\ell+\tilde{\ell}\tan\beta,x_3=0)$.
The value of $\ell_*$ is given by
\begin{equation}\label{eq:ellstar}
\ell_* =\left(\tan\beta\right)\tilde{\ell} =\left(\tan\beta \right)\ell ^{1/2}.
\end{equation}
For $\underline{\ell} =\ell -\ell_* $ and $\overline{\ell}=\ell+\ell_*$, we have the following geometrical inclusions
\begin{equation}\label{inclu}
\mathcal{D}_{\underline{\ell},\ell_3,0}^{\tilde{\ell}}\subset \mathcal{D}_{\ell,\ell_3,\beta}^{\tilde{\ell}} \subset \mathcal{D}_{\overline{\ell},\ell_3,0}^{\tilde{\ell}}.
\end{equation}
Using the inclusions \eqref{inclu}, we have the following inequalities:
\begin{equation}\label{eq:in1}
\frac{1}{4\ell\ell_3}E^{\mathsf{Dir}}_{\tilde{\ell}}(\bfr,\nu,\beta,\ell,\ell_3)\le \frac{\underline{\ell}\ell_3}{\ell\ell_3}\left(\frac{1}{4\underline{\ell}\ell_3}E^{\mathsf{Dir}}_{\tilde{\ell}}\left(\bfr,\nu,0,\underline{\ell},\ell_3\right)\right),
\end{equation}
and
\begin{equation}\label{eq:in2}
\frac{\overline{\ell}\ell_3}{\ell\ell_3}\left(\frac{1}{4\overline{\ell}\ell_3}E^{\mathsf{Dir}}_{\tilde{\ell}}\left(\bfr,\nu,0,\overline{\ell},\ell_3\right)\right)\le \frac{1}{4\ell\ell_3}E^{\mathsf{Dir}}_{\tilde{\ell}}(\bfr,\nu,\beta,\ell,\ell_3).
\end{equation}
Therefore, using \eqref{Dir1}, \eqref{Dir2}, we can get ride of the Dirichlet energy and obtain from \eqref{eq:in1} and \eqref{eq:in2} the following inequalities:
\begin{equation}\label{eq:in1bis}
\frac{1}{4\ell\ell_3}E(\bfr,\nu,\beta,\ell,\ell_3)\le \frac{\underline{\ell}\ell_3}{\ell\ell_3}\left(\frac{1}{4\underline{\ell}\ell_3}\left(E\left(\bfr,\nu,0,\underline{\ell},\ell_3\right)+o(\ell\ell_3)\right)\right),
\end{equation}
and
\begin{equation}\label{eq:in2bis}
\frac{\overline{\ell}\ell_3}{\ell\ell_3}\left(\frac{1}{4\overline{\ell}\ell_3}E\left(\bfr,\nu,0,\overline{\ell},\ell_3\right)\right)\le \frac{1}{4\ell\ell_3}\left(E(\bfr,\nu,\beta,\ell,\ell_3)+o(\ell\ell_3)\right).
\end{equation}
As $\underline{\ell}=\ell+o(\ell)$ and $\overline{\ell}=\ell+o(\ell)$ (see \eqref{eq:ellstar}), we finally obtain (using the result given in Remark \eqref{rem:existence}) that
\begin{equation}\label{aim1upper}
\underset{n\rightarrow+\infty}{\lim\sup}\ \frac{1}{4\ell^{(n)}\ell_3^{(n)}}E(\bfr,\nu,\beta,\ell^{(n)},\ell_3^{(n)})\le\underset{n\rightarrow+\infty}{\lim}\frac{1}{4\ell^{(n)}\ell_3^{(n)}}E\left(\bfr,\nu,0,\ell^{(n)},\ell_3^{(n)}\right),
\end{equation}
and
\begin{equation}\label{aim2upper}
\underset{n\rightarrow+\infty}{\lim}\frac{1}{4\ell^{(n)}\ell_3^{(n)}}E\left(\bfr,\nu,0,\ell^{(n)},\ell_3^{(n)}\right)\le\underset{n\rightarrow+\infty}{\lim\inf}\frac{1}{4\ell^{(n)}\ell_3^{(n)}}E(\bfr,\nu,\beta,\ell^{(n)},\ell_3^{(n)}).
\end{equation}
The result is obtained using \eqref{aim1upper} and \eqref{aim2upper}.
\end{proof}

\subsection{Proof of Theorem \ref{thm:mon}}
\label{Sec:NewCoords}
In this subsection we will give the proof of Theorem \ref{thm:mon} by comparing to a problem in convenient coordinates.

For $L, L_3>0$ and $\alpha \in (0,\pi)$ we define
\begin{equation*}
\widetilde{\mathcal{D}}=\widetilde{\mathcal{D}}_{L,L_3,\alpha}=\widetilde{\mathcal{D}}_{L,\alpha}\times(-L_3,L_3),
\end{equation*}
with
\begin{align}
\label{tildeDnu}
\widetilde{\mathcal{D}}_{L,\alpha}=\left\{(v_1,v_2)\in\R^2~:~ v_1>-v_2,\;\; \left| (\tan\alpha) v_1- v_2 \right| \leq \frac{L}{\sqrt{2}} (1+ \tan \alpha) \right\}.
\end{align}
\begin{rem}\label{rem:BdryArea}
Notice that the area of $\widetilde{\mathcal{D}}_{L,L_3,\alpha} \cap \{ v_1=-v_2\}$ is $4 L L_3$.
\end{rem}

Define furthermore the functional
\begin{align}\label{newGLH}
&\EH(\THfder)=\EH_{\bfr,\nu,\alpha,L,L_3}(\THfder)  \\
&=\int_{\widetilde{\mathcal{D}}_{L,L_3,\alpha}}
|\partial_1\THfder|^2+\TT|\partial_2\THfder|^2+|(-i\partial_3+v_1)\THfder|^2
 \displaystyle{-\bfr|\THfder|^2+\frac{\bfr}{2}\T|\THfder|^4\,
 \mathrm{d}v_1\mathrm{d}v_2\mathrm{d}v_3},\nonumber
\end{align}
for $\tilde{\f}$ in the space
\begin{multline*}\label{spaceSH}
\sH_{L,L_3, \alpha}=\left\{\THfder \in L^2(\widetilde{\mathcal{D}}_{L,L_3,\alpha})~:~ \right.\\
\qq\qq\qq \left. \left(|\partial_1\THfder|^2+\TT|\partial_2\THfder|^2+|(-i\partial_3+v_1)\THfder|^2\right)\in L^2(\widetilde{\mathcal{D}}_{L,L_3,\alpha}),\right. \\
\left.\THfder=0 \;\; \text{on} \;\; \partial\widetilde{\mathcal{D}}_{L,L_3,\alpha}\backslash \{v_2=-v_1\}\right\}.
\end{multline*}
Also, introduce the following ground state energy,
\begin{equation}\label{newminH}
\widetilde{E}=\widetilde{E}(\bfr,\nu,\alpha, L, L_3)=\underset{\THfder\in\sH_{L,L_3, \alpha}}{\inf}\EH(\THfder).
\end{equation}
In the following lemma, we collect the needed results about $\EH$.
\begin{lem}\label{l1}
Suppose that $\bfr\in(\Theta_0,1]$ and $\alpha\in\left(0,\pi\right)$. For all $\nu\in\left(0,\frac{\pi}{2}\right)$, $L>0$ and $L_3>0$, the functional $\EH_{\bfr,\nu,\alpha,L,L_3}$ defined in \eqref{newGLH} has a minimizer, and any minimizer  $\tilde{\f}=\f_{\bfr,\nu,\alpha,L,L_3}$ satisfies \begin{equation}
\EH_{\bfr,\nu,\alpha,L,L_3}(\tilde{\f})=\widetilde{E}(\bfr,\nu,\alpha, L, L_3), \quad \Vert \tilde{\f}\Vert_{L^\infty(\widetilde{\mathcal{D}}_{L,L_3,\alpha})}\le 1.
\end{equation}
and the following Euler-Lagrange equation associated with the minimization problem \eqref{newminH}
\begin{equation}\label{newGLeq}
\Big(-\partial_{v_1}^2-\TT\partial_{v_2}^2+(-i\partial_{v_3}+v_1)^2\Big)\THfder=\bfr(1-\T|\THfder|^2)\THfder,
\end{equation}
and
\begin{align}\label{id}
\displaystyle{\int_{\widetilde{\mathcal{D}}_{L,L_3,\alpha}}
\Big\{ |\partial_1\THfder|^2+\TT|\partial_2\THfder|^2+|(-i\partial_3+v_1)\THfder|^2-\bfr|\THfder|^2+\frac{\bfr}{2}\T|\THfder|^4\,\Big\} \mathrm{d}v} \nonumber \\
=-\frac{\bfr}{2}\T\displaystyle{\int_{\widetilde{\mathcal{D}}_{L,L_3,\alpha}}|\THfder|^4\,\mathrm{d}v}.
\end{align}
Furthermore, we have the following conclusions,
\begin{enumerate}[label=(\roman*)]
\item\label{1} In the case when \begin{equation}\label{para1}
\alpha = \arctan(\tan^2(\nu)),\quad
L_3 = \ell,\quad
L = \sqrt{2} \ell \sin(\nu),
\end{equation} we have the identity
\begin{align}
\label{eq:Energies_ChgCoord}
\widetilde{E}(\bfr,\nu,\alpha, L, L_3) = E(\bfr,\nu,\ell).
\end{align}
In particular, still with this special relation between the parameters,
\begin{align}\label{eq:NormalizedEnergies}
\sqrt{2} \sin(\nu) \frac{\widetilde{E}(\bfr,\nu,\alpha, L, L_3)}{4 L L_3} =  \frac{E(\bfr,\nu,\ell)}{4 \ell^2}.
\end{align}
\item \label{2} Let $C>0$ and let $\{ (L^{(n)}, L_3^{(n)}) \}_{n}$ be a sequence satisfying
\begin{equation}\label{limit-1new}
0< L^{(n)} \underset{n\rightarrow+\infty}{\longrightarrow} +\infty,\quad\text{and}\quad
C^{-1} < \frac{L^{(n)}}{ L^{(n)}_3} < C \quad \forall n,
\end{equation}
with a fixed constant $C>0$. The limit
\begin{equation*}
\tilde{e}=\tilde{e}(\bfr,\nu,\alpha)=\underset{n\rightarrow+\infty}{\lim}\frac{1}{4L^{(n)} L_3^{(n)} }\widetilde{E}(\bfr,\nu,\alpha, L^{(n)}, L_3^{(n)})
\end{equation*}
is finite and is independent of $C$ and of the sequence $\{ (L^{(n)}, L_3^{(n)}) \}_{n}$.
\item \label{3} The quantity $\tilde{e}(\bfr,\nu,\alpha)$ is independent of $\alpha$.
\end{enumerate}
\end{lem}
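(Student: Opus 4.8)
The plan is to derive the preliminary statements together with conclusions~\ref{1}--\ref{3} from a single explicit linear change of variables, combined with Lemma~\ref{decaytransverse} (the analogous facts for the ``straight'' $3$D Ginzburg--Landau functional) and Lemma~\ref{key} (the thermodynamic limit on slanted boxes). As $\nu\in(0,\tfrac\pi2)$ is fixed, $\sin\nu\neq0$ and the factors $\cot\nu,\tan\nu$ below are harmless constants. Introduce $\Phi_\nu\colon(x_1,x_2,x_3)\mapsto(v_1,v_2,v_3)$ with $v_1=x_1\cos\nu-x_2\sin\nu$, $v_2=x_1\tan\nu\sin\nu+x_2\sin\nu$, $v_3=x_3$. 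Four elementary facts drive everything: (a) $\det\Phi_\nu=\tan\nu$, so $\mathrm{d}v=\tan\nu\,\mathrm{d}x$; (b) $v_1+v_2=x_1/\cos\nu$, so $\Phi_\nu$ maps $\{x_1>0\}$ onto $\{v_1>-v_2\}$ and $\partial\RRh$ onto $\{v_1=-v_2\}$; (c) the rows $\nabla v_1,\nabla v_2$ are orthogonal with $|\nabla v_1|=1$ and $|\nabla v_2|=\tan\nu$, whence $|\partial_{x_1}\f|^2+|\partial_{x_2}\f|^2=|\partial_{v_1}\hat\f|^2+\TT|\partial_{v_2}\hat\f|^2$ with $\hat\f:=\f\circ\Phi_\nu^{-1}$; (d) $-x_1\cos\nu+x_2\sin\nu=-v_1$, so the magnetic term becomes $|(-i\partial_{v_3}-v_1)\hat\f|^2=|(-i\partial_{v_3}+v_1)\overline{\hat\f}|^2$. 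Setting $\tilde\f:=(\cot\nu)^{1/2}\,\overline{\hat\f}$ and collecting the powers of $\tan\nu$ coming from the Jacobian and from this rescaling---the rescaling being tuned precisely so that $\tfrac\bfr2|\f|^4$ becomes $\tfrac\bfr2\T|\tilde\f|^4$ while all quadratic coefficients stay equal to $1$---one gets the exact identity
\[
\mathscr{E}_{\bfr,\nu,\beta,\ell',L_3}(\f)=\EH_{\bfr,\nu,\alpha,L,L_3}(\tilde\f),
\]
where $\f\mapsto\tilde\f$ is a bijection of the corresponding function spaces, $L=\sqrt{2}\,(\sin\nu)\,\ell'$, the $L_3$'s coincide, the slab $\{|x_2|<\ell'\}$ is carried onto $\{|(\tan\alpha)v_1-v_2|\le\tfrac{L}{\sqrt2}(1+\tan\alpha)\}$, and $\tan\beta=\dfrac{\tan\alpha\cos\nu-\tan\nu\sin\nu}{(\sin\nu)(1+\tan\alpha)}$; in particular $\alpha=\arctan(\tan^2\nu)\iff\beta=0$. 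Taking infima gives $\widetilde{E}(\bfr,\nu,\alpha,L,L_3)=E(\bfr,\nu,\beta,\ell',L_3)$. (For $\alpha<\arctan(\tan^2\nu)$ this $\beta$ is negative; the proof of Lemma~\ref{key} is insensitive to the sign of $\beta$, or one invokes an obvious reflection.)

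Granting this, the preliminary assertions are immediate: existence of a minimizer, the bound $\|\tilde\f\|_{L^\infty}\le1$ and the identity~\eqref{id} are the $\Phi_\nu$-images of the corresponding statements of Lemma~\ref{decaytransverse} applied to $\mathscr{E}_{\bfr,\nu,\beta,\ell',L_3}$ (alternatively one repeats the standard truncation and testing arguments directly for $\EH$, the extra fixed constants $\T,\TT$ and the tilted Neumann face playing no role, and the $L^\infty$ bound using $\bfr\le1$), and the Euler--Lagrange equation~\eqref{newGLeq} follows by the usual variational computation. For conclusion~\ref{1}, specialise the change of variables to $\alpha=\arctan(\tan^2\nu)$: then $\beta=0$, $\ell'=\ell$, and since $\mathcal{D}_\ell=\mathcal{D}_{\ell,\ell,0}$ the identity reads $\widetilde{E}(\bfr,\nu,\alpha,\sqrt2\,\ell\sin\nu,\ell)=E(\bfr,\nu,\ell)$, which is~\eqref{eq:Energies_ChgCoord}; then~\eqref{eq:NormalizedEnergies} follows by dividing by $4LL_3=4\sqrt2\,\ell^2\sin\nu$ and multiplying by $\sqrt2\sin\nu$, consistently with Remark~\ref{rem:BdryArea}.

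For conclusions~\ref{2} and~\ref{3}, apply the change of variables with a general admissible $\alpha$ and set $m^{(n)}:=L^{(n)}/(\sqrt2\sin\nu)$; then $\widetilde{E}(\bfr,\nu,\alpha,L^{(n)},L_3^{(n)})=E(\bfr,\nu,\beta,m^{(n)},L_3^{(n)})$ with $\beta=\beta(\nu,\alpha)$ as above, and
\[
\frac{\widetilde{E}(\bfr,\nu,\alpha,L^{(n)},L_3^{(n)})}{4L^{(n)}L_3^{(n)}}=\frac{1}{\sqrt2\sin\nu}\cdot\frac{E(\bfr,\nu,\beta,m^{(n)},L_3^{(n)})}{4\,m^{(n)}L_3^{(n)}},
\]
while $\{(m^{(n)},L_3^{(n)})\}$ satisfies~\eqref{limit-1} with a rescaled ratio-constant. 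By Lemma~\ref{key} the right-hand side converges to $\tfrac{1}{\sqrt2\sin\nu}\,e(\bfr,\nu,\beta)=\tfrac{1}{\sqrt2\sin\nu}\,e(\bfr,\nu)$, a finite number independent of $C$, of the sequence and---crucially---of $\beta$, hence of $\alpha$. This establishes~\ref{2} and~\ref{3}, with the additional identification $\tilde e(\bfr,\nu,\alpha)=e(\bfr,\nu)/(\sqrt2\sin\nu)$.

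The only genuinely delicate step is fixing $\Phi_\nu$: one must choose the orientation $v_1=+x_1\cos\nu-x_2\sin\nu$ (rather than its negative) so that $\{x_1>0\}$ maps to $\{v_1>-v_2\}$ and not to its complement---this is exactly what forces the complex conjugation in the definition of $\tilde\f$---and then verify that $\det\Phi_\nu=\tan\nu$ together with the $(\cot\nu)^{1/2}$ rescaling reproduces precisely the coefficient pattern of $\EH$ (namely $1,\TT,1$ on the gradient terms and $\tfrac\bfr2\T$ on the quartic), and that the transverse slab becomes the stated $\alpha$-dependent constraint with $L=\sqrt2\,\ell'\sin\nu$. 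These are short but unforgiving computations; once they are carried out, the remainder is bookkeeping plus the direct quotation of Lemmas~\ref{decaytransverse} and~\ref{key}.
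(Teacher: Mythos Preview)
Your proposal is correct and follows essentially the same route as the paper. Both arguments rest on the \emph{same} linear change of variables---the paper presents it as a rotation in the $x_1x_2$-plane followed by a dilation in the second variable, you write it directly as the single map $\Phi_\nu$, but the composite $x\mapsto v$ is identical---together with the rescaling $\f=\sqrt{\tan\nu}\,\tilde\f$. Your treatment is in fact slightly more careful on two points: you make the complex conjugation explicit (needed because $-x_1\cos\nu+x_2\sin\nu=-v_1$ while the target functional carries $+v_1$; the paper silently absorbs this sign), and for conclusions~\ref{2}--\ref{3} you spell out the reduction to the generalized slanted-box problem $E(\bfr,\nu,\beta,\ell',L_3)$ with $\beta=\beta(\nu,\alpha)$ and then invoke Lemma~\ref{key}, whereas the paper merely says these ``follow from the results of \cite{FKP13}, as mentioned in Remark~\ref{rem:existence}''. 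These are differences of exposition, not of method.
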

\begin{proof}
Obtaining the Euler-Lagrange equation \eqref{newGLeq} is rather standard. We use \eqref{newGLeq} to obtain \eqref{id} by an integration by parts after multiplying by the conjugate of $\THfder$.

The proof of the \ref{1}
is by a change of coordinates.
Composing the two changes of variables
\begin{equation*}
\left\{\begin{array}{ccl}
x_1 & = & -u_1\cv -u_2\sv
\\ x_2 & = & u_1\sv -u_2\cv
\\ x_3 & = & u_3
\end{array}\right. \qquad \text{and next} \qquad
\left\{\begin{array}{ccl}
u_1 & = & -v_1
\\ u_2 & = & \frac{v_2}{-\T}
\\ u_3 & = & v_3
\end{array},\right.
\end{equation*}
which are respectively a rotation and a dilatation in the second variable, changes the functional expression and maps the domain $\mathcal{D}_\ell$ onto the domain $\widetilde{\mathcal{D}}_{\ell,\nu}$, where $\widetilde{\mathcal{D}}_{\ell,\nu}$ is given by
\begin{equation*}
\widetilde{\mathcal{D}}_{\ell,\nu}=\left\{(v_1,v_2)\in\R^2\,:\, v_1>-v_2,\; -\frac{\ell\T}{\cv}< v_2-v_1\TT<\frac{\ell\T}{\cv}\right\}\times(-\ell, \ell).
\end{equation*}
Combined with the change of function $\f=\sqrt{\T}\,\THfder$ we get
$$
\EH(\THfder) = \mathscr{E}_{\bfr,\nu,\ell}(\f).
$$
It is also clear that when $\alpha, L, L_3$ satisfy \eqref{para1} we have
$$
\widetilde{\mathcal{D}}_{L,L_3,\alpha} = \widetilde{\mathcal{D}}_{\ell,\nu}.
$$
Thus, \eqref{eq:Energies_ChgCoord} is proved. Furthermore, \eqref{eq:NormalizedEnergies} follows immediately using Remark~\ref{rem:BdryArea}.

\smallbreak The second and the third assertions follow from the results of \cite{FKP13}, as mentioned in Remark \ref{rem:existence}.
\end{proof}

\begin{proof}[Proof of Theorem \ref{thm:mon}]
It suffices to prove the monotonicity of $e(\bfr,\nu)$ in $\nu$ restricted to $\nu \in (0,\frac{\pi}{2})$ since $e(\bfr,\cdot)$ is continuous by \cite[Theorem 3.13]{FKP13}. So in the remainder of the proof we work under this restriction.

By Lemma~\ref{l1}, we can define
\begin{equation*}
\widetilde{E}(\bfr,\nu,L)=\widetilde{E}(\bfr,\nu,\alpha=\frac{\pi}{4},L,L),
\end{equation*}
and (as in \eqref{eq:Limit1bis}) \begin{equation}\label{eq:Limit1bisnew}
\tilde{e}(\bfr,\nu)=\tilde{e}(\bfr,\nu,\alpha=\frac{\pi}{4}).
\end{equation}
Using the first point \ref{1} of Lemma~\ref{l1}, we have the following correspondence
\begin{equation}\label{eq:corr}
e(\bfr,\nu)=\sqrt{2}\sv\,\tilde{e}(\bfr,\nu).
\end{equation}
We consider
$$\Delta_{\bfr,\nu}(\varepsilon)=e(\bfr,\nu+\varepsilon)-e(\bfr,\nu).$$
Using \eqref{eq:corr} and Lemma~\ref{l1}, we can write
\begin{equation}\label{delta}
\begin{split}
\Delta_{\bfr,\nu}(\varepsilon)
=\sqrt{2}\underset{L\rightarrow+\infty}{\lim}\left(\sin(\nu+\varepsilon)\frac{\widetilde{E}(\bfr,\nu+\varepsilon,L
)}{4L^2}-\sin(\nu)\frac{\widetilde{E}(\bfr,\nu,L)}{4L^2}\right).
\end{split}
\end{equation}
We take $\varepsilon>0$ and we are looking for a positive lower bound on $\Delta_{\bfr,\nu}(\varepsilon)$ in order to prove the monotonicity. We will use a minimizer of the functional $$\widetilde{\mathscr{E}}_{\bfr,\nu+\varepsilon,L}=\widetilde{\mathscr{E}}_{\bfr,\nu+\varepsilon,\alpha=\frac{\pi}{4},L,L}.$$ This minimizer exists and will be denoted
$\fmine=\tilde{\f}_{\bfr,\nu+\varepsilon,L}$. Therefore we have
\begin{equation*}
\widetilde{\mathscr{E}}_{\bfr,\nu,L}(\fmine)\ge \widetilde{E}(\bfr,\nu,L) \quad \text{and}\quad \widetilde{\mathscr{E}}_{\bfr,\nu+\varepsilon,L}(\fmine)=\widetilde{E}(\bfr,\nu+\varepsilon,L).
\end{equation*}
Upon inserting this in \eqref{delta} we get
$$\Delta_{\bfr,\nu}(\varepsilon)\ge\frac{\sqrt{2}}{4}\underset{L\rightarrow+\infty}{\lim}\frac{1}{L^2}\left(\sin(\nu+\varepsilon)\widetilde{\mathscr{E}}_{\bfr,\nu+\varepsilon,L}(\fmine)-\sin(\nu)\widetilde{\mathscr{E}}_{\bfr,\nu,L}(\fmine)\right).$$
Considering the integral expression given by \eqref{newGLH} and writing
\begin{equation}
\aligned
&\sin(\nu+\varepsilon)\widetilde{\mathscr{E}}_{\bfr,\nu+\varepsilon,L}(\fmine)-\sin(\nu)\widetilde{\mathscr{E}}_{\bfr,\nu,L}(\fmine)\\
=&\sin(\nu+\varepsilon)\widetilde{\mathscr{E}}_{\bfr,\nu+\varepsilon,L}(\fmine)-\sin(\nu)\widetilde{\mathscr{E}}_{\bfr,\nu,L}(\fmine)-\sin(\nu)\widetilde{\mathscr{E}}_{\bfr,\nu+\varepsilon,L}(\fmine)+\sin(\nu)\widetilde{\mathscr{E}}_{\bfr,\nu+\varepsilon,L}(\fmine),
\endaligned
\eeq
we have
\begin{equation*}
\begin{split}
\Delta_{\bfr,\nu}(\varepsilon)\ge \frac{\sqrt{2}}{4}\underset{L\rightarrow+\infty}{\lim}\Big(&\frac{1}{L^2}(\sin(\nu+\varepsilon)-\sin(\nu))\widetilde{E}(\bfr,\nu+\varepsilon,L)
\\&+\frac{1}{L^2}\sin(\nu)(\tan^2(\nu+\varepsilon)-\tan^2(\nu))\displaystyle{
\int_{\widetilde{\mathcal{D}}_{L,L,\frac{\pi}{4}}}|\partial_2\fmine|^2\,\mathrm{d}v}
\\&+\frac{1}{L^2}\sin(\nu)\frac{\bfr}{2}(\tan(\nu+\varepsilon)-\tan(\nu))\displaystyle{\int_{
\widetilde{\mathcal{D}}_{L,L,\frac{\pi}{4}}}|\fmine|^4\,\mathrm{d}v}
\big).
\end{split}
\end{equation*}
For $\varepsilon\ge 0$ small enough, we have that $\tan^2(\nu+\varepsilon)-\tan^2(\nu)\ge0$, so that the term
$$\underset{L\rightarrow+\infty}{\lim}\frac{1}{4L^2}\displaystyle{\int_{\widetilde{\mathcal{D}}_{L,L,\frac{\pi}{4}}}\sin(\nu)(\tan^2(\nu+\varepsilon)-\tan^2(\nu))|\partial_2\fmine|^2\,\mathrm{d}v}$$
is positive and we can discard it in the lower bound. Using the identity \eqref{id} of Lemma \ref{l1} we have
$$\widetilde{E}(\bfr,\nu+\varepsilon,L)=-\frac{\bfr}{2}\tan(\nu+\varepsilon)\displaystyle{\int_{\widetilde{\mathcal{D}}_{L,L,\frac{\pi}{4}}}|\fmine|^4\,\mathrm{d}v}.$$
Therefore, we get
\begin{multline*}
\Delta_{\bfr,\nu}(\varepsilon)\ge \frac{\sqrt{2}}{4}\frac{\bfr}{2}\Big(
\sin(\nu)(\tan(\nu+\varepsilon)-\tan(\nu))
\\-(\sin(\nu+\varepsilon)
-\sin(\nu))\tan(\nu+\varepsilon)\Big)
\underset{L\rightarrow+\infty}{\lim}\frac{1}{L^2}\displaystyle{\int_{\widetilde{\mathcal{D}}_{L,L,\frac{\pi}{4}}}|\fmine|^4\,\mathrm{d}v}.
\end{multline*}
To conclude, as $\nu\in(0,\frac{\pi}{2})$, it is easy to see that there exists $\varepsilon_0>0$ small enough such that for all $0<\varepsilon<\varepsilon_0$
$$-\T(\sin(\nu+\varepsilon)-\sin(\nu))+\sin(\nu+\varepsilon)(\tan(\nu+\varepsilon)-\T)\ge 0.$$
Indeed, by differentiation, we have as $\varepsilon\to 0$
\begin{align*}
-\T(\sin(\nu+\varepsilon)&-\sin(\nu))+\sin(\nu+\varepsilon)(\tan(\nu+\varepsilon)-\T)\\ &= \varepsilon\left(-\T \cos(\nu)+ \sin(v)(1+\tan^2\nu)\right)+O(\varepsilon^2) \\
&=\varepsilon \sv\TT + O(\varepsilon^2).
\end{align*}
This finishes the proof.
\end{proof}

\section{Abrikosov structure on a surface in $3$D}\label{Sec:Abri}

In this section, we prove Theorem \ref{bislem:3.1Par} stated in Section \ref{Sec:statement} below. We are interested in constructing bounded solutions with lattice structure, i.e.
for which the physical quantities -- the density of Cooper pairs $|\psi|^2$, the magnetic field $\mathbf{B}$ and the magnetic current $\Re\left(\overline{\psi} (-i\nabla + \mathbf{A})\psi\right)$ are periodic.
This corresponds to states $\psi$ satisfying the magnetic periodic conditions given by \eqref{3.4bis} below.

Before stating the main result of this section, we should introduce some notations.

First of all, we introduce for an interval $J \subset {\mathbb R}$ 
\begin{align}
L^2_{\rm comp}(J\times \R^2) := \{ \psi \in L^2_{\rm loc}(J \times \R^2)\,:\, \psi \in L^2(J \times K) \text{ for all compact } K \subset {\mathbb R}^2\}.
\end{align}

\subsection{Spectral problem with periodic condition on a parallelogram cell}\label{ParCell}

Let $\mathbf{e}_2$ and $\mathbf{e}_3$ denote the standard unit vectors $(0,1,0)$ and $(0,0,1)$ associated with the variables $x_2$ and $x_3$ (respectively).

\begin{nota}\label{nota-D} Let $R>0$, $R'>0$ and $\theta\in (0,\pi)$. We denote by $\LLat_{R,R',\theta}$ the lattice (in the $x_2x_3$-plane)
 defined by
$$\LLat_{R,R',\theta}=\text{span}_{\Z}\left\{s:=R\mathbf{e}_2, t:=R'\mathbf{e}\right\},
$$
where $\mathbf{e}=\cos\theta \mathbf{e}_2+\sin\theta\mathbf{e}_3$.

We will denote by $\Lat_{R,R',\theta}$ the following fundamental domain
$$\Lat_{R,R',\theta}=\left\{(x_2,x_3)\in \R^2, \frac{x_3}{\tan\theta} \le x_2\le \frac{x_3}{\tan\theta} +R, 0\le x_3\le R'\sin\theta \right\}.$$

\end{nota}

Notice that for convenience of notation, we have assumed that the system of coordinates is chosen so that one of the `legs' of the lattice is parallel to a coordinate axis. In general, there can be an angle $\tau$ between the lattice and the (projection onto the $x_2x_3$-plane of the) magnetic field. Therefore, this choice necessitates a change of convention for the magnetic field compared to what has been used in the first part of the article. As a consequence, we introduce the following notation for the magnetic vector potential and field,

\begin{equation}\label{Avgen}
\mathbf{A}=\mathbf{A}_{\nu,\ang}=\left(\begin{array}{c} 0\\ x_1\cv \sin\ang \textcolor{black}{-\frac{1}{2}x_3\sin\nu}\\ -x_1 \cv\cos\ang+\textcolor{black}{\frac{1}{2}}x_2 \sv\end{array}\right),
\end{equation}
and
\begin{equation}\label{Bvgen}
\mathbf{B}=\mathbf{B}_{\nu,\ang}= \nabla \times \mathbf{A}_{\nu,\ang}=\left(\begin{array}{c} \sv\\ \cv \cos\ang \\ \cv \sin\ang\end{array}\right).
\end{equation}

The magnetic flux $\Phi_{\mathbf{A}_{\nu,\ang}}$ over the cell $\Lat_{R,R',\theta}$ is 
\begin{equation}\label{fluxbis}
\Phi_{\mathbf{A}_{\nu,\ang}}=\frac{1}{2\pi}\displaystyle{\iint_{\Lat_{R,R',\theta}}\mathbf{B}_{\nu,\ang}\cdot{\mathbf e}_1\,\mathrm{d}x_2\mathrm{d}x_3}=\frac{RR'\sv\sin\theta}{2\pi}.
\end{equation}

The flux $\Phi_{\mathbf{A}_{\nu,\ang}}$ is obviously proportional to the area of the cell $RR'\sin\theta$.
As we are interested in lattice states (i.e. satisfying a double-periodic condition over the lattice $\LLat_{R,R',\theta}$),
it is natural to impose the following flux quantization condition
\begin{equation}\label{quantbis}
RR'\sv\sin\theta\in 2\pi\Z.
\end{equation}

Under the flux quantization condition \eqref{quantbis}, we can introduce the following magnetic periodicity condition on wave functions:
\begin{equation}
\label{3.4bis}
\left\{
\begin{aligned}
&\psi(x_1,x_2+R,x_3)=\psi(x)e^{\qe i\textcolor{black}{\frac{R}{2}} x_3\sv},
\\& \psi(x_1,x_2+R'\cos\theta,x_3+R'\sin\theta)=\psi(x)e^{\qe i\textcolor{black}{\frac{R'}{2}}\sin\nu(x_3\cos\theta- \textcolor{black}{x_2\sin\theta})}.
\end{aligned}\right.
\end{equation}
The magnetic periodic conditions \eqref{3.4bis} can more compactly be written as 
\begin{equation}\label{eq:per_gen}
\psi(x+\latti)=\psi(x)e^{-ig_\latti(x)}, \quad \forall \latti \in \LLat_{R,R',\theta},
\end{equation}
where $g_\latti$ is the function defined as follows
\begin{equation}\label{eq:autre2}
g_\latti(x)=g_{\nn s+\mm t}(x)
=\frac{b}{2}\left((\nn s+\mm t)\wedge x +(\nn s)\wedge (\mm t)\right),
\end{equation} 
for all $\latti=\nn s+\mm t$ in $\LLat_{R,R',\theta}$ ($\nn ,\mm \in\Z$), with $b=\sin\nu$. 
Here, for $x,y \in {\mathbb R}^3$, we have used the definition
$$
x\wedge y = x_2 y_3 - x_3 y_2.
$$

Notice that because 
$$\psi(x+\nn s+\mm t)=\psi((x+\nn s)+\mm t)=\psi((x+\mm t)+\nn s),$$ when $\psi(x)\neq 0$ we are lead to the condition 
\begin{equation}\label{eq:onlythisautre}
e^{-ig_{\nn s+\mm t}(x)}=e^{-ig_{\nn s}(x+\mm t)}e^{-ig_{\mm t}(x)}\left(=e^{-ig_{\mm t}(x+\nn s)}e^{-ig_{\nn s}(x)}\right).
\end{equation}
With our choice of $g_\latti$ given by \eqref{eq:autre2}, \eqref{eq:onlythisautre} follows from \eqref{quantbis}. Thus the quantization of the flux is important for consistency of these periodic conditions. 

We also state for later reference the following identity for wave functions $\psi$ satisfying \eqref{3.4bis} for all $w \in \LLat_{R,R',\theta}$,
\begin{equation}\label{gradidbis0}
\left.\NAAT\psi\right\vert_{x+w}=e^{-ig_w(x)}\left.\NAAT\psi\right\vert_{x}.
\end{equation}

Under condition \eqref{quantbis}, we consider the following eigenvalue problem on the set $\mathbb{R}_+\times \Lat_{R,R',\theta}$
\begin{equation}\label{3.5gen}
\left\{\begin{array}{ll}
(-i\nabla\qeb \mathbf{A}_{\nu,\ang})^2\psi=\lambda\psi & \ \text{in} \ \mathbb{R}_+\times \DCper\, ,\\
\frac{\partial\psi}{\partial x_1}=0 & \ \text{on} \ \p{\mathbb R}^3_+\, ,\\
\psi \ \text{satisfying} \ \eqref{3.4bis}\, . &
\end{array}\right.
\end{equation}

\begin{lem}\label{lem:spectralPB}
We can associate with the spectral problem \eqref{3.5gen}, a unique self-adjoint operator which corresponds to the Friedrichs extension of the following quadratic form
\begin{equation*}
\QQvpT(\psi)=\displaystyle{\int_{\R_+\times \Lat_{R,R',\theta}}|(-i\nabla\qeb \AAT)\psi|^2\,\mathrm{d}x},
\end{equation*}
defined for all $\psi$ in the form domain \begin{multline}\label{eq:Hvt}
\HCper=\{\psi\in L^2_{\rm comp}(\mathbb{R}^3_+)~:~ \\
(-i\nabla\qeb \mathbf{A}_{\nu,\ang})\psi\in L^2_{\rm comp}(\mathbb{R}^3_+,\mathbb{C}^3),\;\; \psi \ \text{satisfies} \ \eqref{3.4bis}\}.
\end{multline}
\end{lem}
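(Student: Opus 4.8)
The plan is to follow the standard recipe for defining a magnetic Schr\"odinger operator with periodic (magnetic-translation) boundary conditions via the Friedrichs extension of its natural quadratic form, the only subtlety being that the form domain $\HCper$ consists of functions on the slab $\mathbb{R}^3_+$ (unbounded in the $x_1$-direction) that are only locally $L^2$, but which descend to genuine $L^2$-functions on the quotient $\mathbb{R}_+\times\Lat_{R,R',\theta}$. First I would check that $\QQvpT$ together with its form domain defines a closed, non-negative, densely defined quadratic form on the Hilbert space $L^2(\mathbb{R}_+\times\Lat_{R,R',\theta})$. Non-negativity is immediate from the expression $\QQvpT(\psi)=\|(-i\nabla\qeb\AAT)\psi\|^2$. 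For density, one notes that smooth functions satisfying the magnetic periodicity \eqref{3.4bis} and compactly supported in the $x_1$-variable form a dense subspace; here one uses the flux quantization \eqref{quantbis}, which via \eqref{eq:onlythisautre} guarantees that the magnetic-periodicity constraint is consistent, so that such functions actually exist in abundance (e.g.\ built by magnetic-periodization of compactly supported functions on a fundamental cell).

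The key step is closedness of the form. I would equip $\HCper$ with the form norm $\|\psi\|^2+\QQvpT(\psi)$ and show completeness. Because $\AAT$ is a smooth vector potential which is bounded on the bounded-in-$(x_2,x_3)$ fundamental cell $\Lat_{R,R',\theta}$, and because the magnetic-periodicity relations \eqref{3.4bis} and \eqref{gradidbis0} show that $|\psi|$ and $|(-i\nabla\qeb\AAT)\psi|$ are genuinely $\LLat_{R,R',\theta}$-periodic, controlling $\psi$ on the slab is equivalent to controlling it on $\mathbb{R}_+\times\Lat_{R,R',\theta}$ with the magnetic Sobolev norm. On that cell the diamagnetic inequality gives $|\nabla|\psi||\le|(-i\nabla\qeb\AAT)\psi|$ pointwise, so the form norm dominates (a local version of) the $H^1$-norm; a Cauchy sequence in the form norm is therefore Cauchy in $L^2$ and has $(-i\nabla\qeb\AAT)\psi_n$ Cauchy in $L^2(\mathbb{R}_+\times\Lat_{R,R',\theta},\mathbb{C}^3)$, and the limit again lies in $\HCper$ (the periodicity conditions pass to the $L^2$-limit, e.g.\ via their formulation \eqref{eq:per_gen} as a.e.\ identities). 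Once the form is closed, the representation theorem for quadratic forms (Kato/Friedrichs) produces a unique self-adjoint operator $\QQvpDzT$ with form domain $\HCper$, and one verifies that its operator domain consists of the $\psi\in\HCper$ for which $(-i\nabla\qeb\AAT)^2\psi\in L^2$ together with the natural (Neumann) boundary condition $\partial_{x_1}\psi=0$ on $\p\mathbb{R}^3_+$ arising from the boundary term in the first Green formula, i.e.\ exactly the problem \eqref{3.5gen}.

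I expect the main obstacle to be purely bookkeeping: carefully translating between the slab picture (functions on $\mathbb{R}^3_+$ obeying \eqref{3.4bis}, only in $L^2_{\rm comp}$) and the quotient picture (honest $L^2$-functions on $\mathbb{R}_+\times\Lat_{R,R',\theta}$), and making sure that the magnetic gauge factors $e^{-ig_w(x)}$ are handled consistently so that $\QQvpT$ is well-defined on the quotient — this is where \eqref{gradidbis0} and the consistency relation \eqref{eq:onlythisautre}, hence the quantization \eqref{quantbis}, are genuinely used. No deep analysis is required beyond the diamagnetic inequality and the standard form-representation theorem; the statement is essentially a definition made rigorous, so the proof can be kept short, citing the general machinery and emphasizing only the role of flux quantization in making the periodic form domain non-trivial and gauge-consistent.
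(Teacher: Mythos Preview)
Your approach is correct and is exactly the standard argument the paper has in mind; the paper's own proof is a single sentence stating that this is a standard result because $\QQvpT$ defines a closed quadratic form on $\HCper$, without spelling out density, closedness, or the role of the flux quantization. Your write-up simply fills in those routine details (one minor slip: $\AAT$ is not bounded on the cell since it grows linearly in $x_1$, but this is irrelevant for closedness, which only needs $\AAT\in L^2_{\loc}$; also the operator should be denoted $\mathcal{L}_{\ang}^{\mathsf{per}}(\nu,\theta)$, not $\QQvpDzT$).
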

\begin{proof}
This is a standard result since $\QQvpT$ defines a closed quadratic form on $\HCper$.
\end{proof}

\begin{rem}
Notice that if $\varphi, \psi$ satisfy \eqref{3.4bis}, $\partial_{x_1} \psi \big|_{x_1=0} = 0$ and are sufficiently regular, then
\begin{align}
\QQvpT(\varphi,\psi) = \langle \varphi, (-i\nabla + \mathbf{A}_{\nu,\ang})^2\psi\rangle.
\end{align}
This follows by integration by parts using that (by \eqref{eq:per_gen} and \eqref{gradidbis0}),
\begin{align*}
\overline{\varphi} (-i\nabla + \mathbf{A}_{\nu,\ang})\psi\big|_{x+w} &= (\overline{e^{-ig_w(x)} \varphi(x)} e^{-ig_w(x)} \left((-i\nabla + \mathbf{A}_{\nu,\ang})\psi\big|_{x}\right) \\
&= \overline{\varphi(x)} (-i\nabla + \mathbf{A}_{\nu,\ang})\psi(x),
\end{align*}
causing the boundary terms to cancel each other pairwise.
\end{rem}

\begin{nota}\label{zetavRbeforeGEN}
We denote by $\mathcal{L}_{\ang}^{\mathsf{per}}(\nu,\theta)$ the linear self-adjoint operator associated with the eigenvalue problem \eqref{3.5gen}, and by $\zeta_\ang(\nu,R,R',\theta)$ the following quantity
\begin{equation*}
\zeta_\ang(\nu,R,R',\theta)=\inf\spe\left(\mathcal{L}_{\ang}^{\mathsf{per}}(\nu,\theta)\right).
\end{equation*}
\end{nota}

\subsection{Statement of the result}\label{Sec:statement}
We present here the statement of the result we prove in this section. Similarly to \eqref{GL} and \eqref{GLgen}, we define (with the space $\HCper$ defined in \eqref{eq:Hvt}.)
\begin{multline}\label{bisGLperPar}
\Eper_\ang(\bfr,\nu,R,R',\theta)\\
=\underset{\psi\in \HCper}{\inf}\displaystyle{\int_{\mathbb{R}_+\times \DCper}\left(|(-i\nabla\qeb\mathbf{A}_{\nu,\ang})\psi|^2-\bfr|\psi|^2+\frac{\bfr}{2}|\psi|^4\right)\,\mathrm{d}x}.
\end{multline}
\begin{thm}\label{bislem:3.1Par}
Suppose that $\nu\in\left(0,\frac{\pi}{2}\right)$, that the magnetic flux satisfies \eqref{quantbis}, and that the linear spectral quantity $\zeta(\nu)$ from \eqref{zeta} satisfies
\begin{equation}\label{eq:assump_zeta}
 \quad \zv<\bfr<1.
\end{equation}

Then
\begin{align}\label{eq:Eigenvalues}
\zeta_\ang(\nu,R,R',\theta)=\zv,
\end{align}
and the quantity $\Eper_\ang(\bfr,\nu,R,R',\theta)$ defined in \eqref{bisGLperPar} is achieved in $\HCper$ with 
\begin{align}\label{eq:Energies}
\Eper_\ang(\bfr,\nu,R,R',\theta)<0.
\end{align}
In particular, minimizers of \eqref{bisGLperPar} exist and are non-trivial.
\end{thm}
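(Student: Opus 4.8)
The proof has two parts, mirroring the classical Abrikosov construction in 2D. First I would establish the linear spectral identity \eqref{eq:Eigenvalues}, and then use it to produce a negative-energy trial state for the nonlinear functional in \eqref{bisGLperPar}, from which \eqref{eq:Energies} and the existence of a non-trivial minimizer follow by a direct variational argument.

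\textbf{Step 1: the linear identity $\ZCper = \zv$.} The inequality $\ZCper \ge \zv$ is the easy direction: any $\psi$ in the form domain $\HCper$ satisfying the magnetic periodicity \eqref{3.4bis}, once extended $\LLat_{R,R',\theta}$-quasi-periodically to all of $\RRh$, is a legitimate competitor for the Rayleigh quotient of $\mathcal{L}(\nu)$ on the half-space (using \eqref{gradidbis0} to see that $|(-i\nabla + \AAT)\psi|^2$ and $|\psi|^2$ are genuinely $\LLat_{R,R',\theta}$-periodic, so the quotient over one cell equals the quotient over $\RRh$ in the averaged sense); since the magnetic field $\mathbf{B}_{\nu,\ang}$ has the same modulus and the same angle $\nu$ with the boundary as $\mathbf{B}_\nu$, the operator is unitarily equivalent to $\mathcal{L}(\nu)$ and hence its infimum is $\zv$. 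For the reverse inequality $\ZCper \le \zv$, I would exhibit an approximate eigenfunction: take the $L^2(\RRh)$-eigenfunction $\phiddt$ of $\mathcal{L}(\nu)$ at energy $\zv$ built from Lemma~\ref{Sdecay} with a Schwartz profile $\fou \in \mathscr{C}^\infty_c(\R)$ (in the $\ang$-rotated gauge \eqref{Avgen}), and \emph{periodize} it over the lattice $\LLat_{R,R',\theta}$ using the cocycle $e^{-ig_w}$, i.e. set $\psi_{\mathrm{per}} = \sum_{w \in \LLat_{R,R',\theta}} e^{-ig_w(x)} \phiddt(x+w)$. The consistency condition \eqref{eq:onlythisautre}, guaranteed by the flux quantization \eqref{quantbis}, makes this sum well-defined and $\HCper$-admissible; the Schwartz decay of $\phiddt$ (Lemma~\ref{Sdecay}) makes the sum converge and controls cross terms, so the Rayleigh quotient of $\psi_{\mathrm{per}}$ tends to $\zv$ as the profile is dilated — actually, because $\phiddt$ is an exact eigenfunction and the lattice sum of an eigenfunction of a periodic operator is again an eigenfunction, one should get $\psi_{\mathrm{per}}$ is an exact eigenfunction at $\zv$ for suitable $\fou$, giving $\ZCper = \zv$ directly. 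This step is where flux quantization and the explicit form of $g_w$ in \eqref{eq:autre2} are essential.

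\textbf{Step 2: negative nonlinear energy.} Let $\psi_0 \in \HCper$ be the just-constructed normalized eigenfunction with $\QQvpT(\psi_0) = \zv \|\psi_0\|^2_{L^2(\R_+\times\Lat_{R,R',\theta})}$; note $\psi_0 \in L^4$ by the Schwartz decay. Evaluate the functional on $\sqrt{s}\,\psi_0$ for small $s>0$:
\begin{equation*}
\int_{\R_+\times\Lat_{R,R',\theta}} \Big( |\NAAT (\sqrt{s}\psi_0)|^2 - \bfr s|\psi_0|^2 + \frac{\bfr}{2}s^2|\psi_0|^4\Big)\,\mathrm{d}x = s(\zv - \bfr)\|\psi_0\|_2^2 + \frac{\bfr}{2}s^2\|\psi_0\|_4^4.
\end{equation*}
By the hypothesis \eqref{eq:assump_zeta} we have $\zv - \bfr < 0$, so the right-hand side is $<0$ for $s>0$ small, giving $\Eper_\ang(\bfr,\nu,R,R',\theta) < 0$, which is \eqref{eq:Energies}.

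\textbf{Step 3: existence of a minimizer.} Finally, to show the infimum in \eqref{bisGLperPar} is attained, I would run the direct method on the cell: take a minimizing sequence $\psi_n \in \HCper$; the energy bound together with $\bfr \le 1$ controls $\|\NAAT \psi_n\|_{L^2(\R_+\times\Lat)}$ and $\|\psi_n\|_{L^4}$, hence (via a magnetic Sobolev/diamagnetic inequality on the bounded-in-$(x_2,x_3)$ cell, with the exponential decay in $x_1$ coming from the subcriticality, as in Lemma~\ref{decaytransverse}) also $\|\psi_n\|_{L^2(\R_+\times\Lat)}$, so $\psi_n$ is bounded in the magnetic form space over one cell. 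Extract a weakly convergent subsequence; the quadratic part is weakly lower semicontinuous, the $L^4$ cell-norm is preserved in the limit by compact Rellich-type embedding on $\R_+\times\Lat$ (again using $x_1$-decay to recover compactness in the unbounded direction — this is the technical heart), and the limit still satisfies the periodicity \eqref{3.4bis} since it is a closed linear constraint; thus the limit is a minimizer. Its energy is the negative value from Step 2, so it is non-trivial.

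\textbf{Main obstacle.} The delicate point is Step 1's lower bound $\ZCper \le \zv$ — specifically verifying that the lattice-periodized eigenfunction is well-defined, $L^2_{\mathrm{comp}}$, satisfies the Neumann condition and the twisted periodicity, and has the right Rayleigh quotient; this requires the flux quantization \eqref{quantbis}, the cocycle identity \eqref{eq:onlythisautre}, and the Schwartz decay from Lemma~\ref{Sdecay} to all work together. A close second is the compactness needed in Step 3 on the semi-infinite cell $\R_+\times\Lat_{R,R',\theta}$, where one must quantitatively exploit the decay in the $x_1$-direction (as in \eqref{3.14}) to rule out mass escaping to $x_1 = +\infty$.
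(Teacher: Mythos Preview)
Your overall architecture matches the paper's: prove the spectral identity \eqref{eq:Eigenvalues} first, then deduce \eqref{eq:Energies} by testing against $\varepsilon\psi_0$, and finally appeal to the direct method for existence. Your Step~2 is exactly what the paper does, and your Step~3 is more detailed than the paper (which simply calls it ``standard arguments'').

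However, you have inverted the difficulty of the two inequalities in Step~1, and your argument for $\ZCper\ge\zv$ has a genuine gap. A function $\psi\in\HCper$, extended quasi-periodically to all of $\RRh$, is \emph{not} in $L^2(\RRh)$ (its modulus is $\LLat_{R,R',\theta}$-periodic), so it is not a legitimate test function for the variational characterization of $\zv=\inf\spe(\mathcal L(\nu))$. Your phrase ``in the averaged sense'' hides exactly the work the paper does to make this rigorous. The paper first shows (Lemma~\ref{eigenp}, via Persson's theorem and comparison with the whole-space Landau operator) that $\inf\spe_{\mathrm{ess}}\bigl(\LLvpTT\bigr)\ge 1$, so that $\ZCper$ is a genuine eigenvalue with an eigenfunction having exponential decay in $x_1$ (Lemma~\ref{exp_decay_x1_p}). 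Only then can one multiply this eigenfunction by large-scale cutoffs $\eta_n(x_2,x_3)$, obtain bona fide elements of $\mathrm{Dom}(\mathcal L(\nu))$, and show that their Rayleigh quotients converge to $\ZCper$ (Lemma~\ref{aux2}); the periodicity of $|\psi|^2$ and $|(-i\nabla+\AAT)\psi|^2$ is what makes the cutoff error a surface-to-volume term. Your proposal would work once supplemented with this cutoff argument (and could in fact bypass Lemma~\ref{eigenp} by working with a minimizing sequence that is smooth and compactly supported in $x_1$), but as written it is incomplete.

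Conversely, the direction $\ZCper\le\zv$ that you flag as the ``main obstacle'' is, in the paper, the shorter one (Lemma~\ref{aux}): your lattice-periodized eigenfunction $\psi_{\mathrm{per}}=\sum_{w}e^{ig_w}\phiddt(\cdot+w)$ is exactly the paper's construction \eqref{psibisgen}, and Lemma~\ref{lem:existencebisCNper} checks in a page that it is an honest eigenfunction of $\LLvpTT$ at level $\zv$, using the cocycle identity \eqref{eq:onlythisautre} (hence flux quantization) and the Schwartz decay of $\phiddt$. So your instincts there are right, and the verification is routine; it is the other inequality that requires the extra spectral input you omitted.
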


Theorem \ref{bislem:3.1Par} states that the lowest eigenvalue of the spectral problem \eqref{3.5gen} is equal to the lowest eigenvalue of the following spectral problem on $\mathbb{R}^3_+$,
\begin{equation}\label{3.5genlin}
\left\{\begin{array}{ll}
(-i\nabla\qeb\mathbf{A}_{\nu,\ang})^2\psi=\lambda\psi & \ \text{in} \ \mathbb{R}^3_+\, ,\\
\frac{\partial\psi}{\partial x_1}=0 & \ \text{on} \ \p {\mathbb R}^3_+\, ,
\end{array}\right.
\end{equation}
which is $\zeta(\nu)$.
The rest of the section is devoted to the proof of Theorem \ref{bislem:3.1Par}.
\subsection{Proof of Theorem \ref{bislem:3.1Par}}\label{sec:proof}

\begin{proof}[Proof of Theorem \ref{bislem:3.1Par}]
By standard arguments, it suffices to prove \eqref{eq:Eigenvalues}.
Indeed, if $\mathcal{L}_{\ang}^{\mathsf{per}}(\nu,\theta) \psi = \zv \psi$, where as assumed $\zv < \bfr$, we get \eqref{eq:Energies} by using $\varepsilon \psi$ as a trial state in \eqref{bisGLperPar} for sufficiently small $\varepsilon$.

Now \eqref{eq:Eigenvalues} follows upon combining Lemma~\ref{aux} and Lemma~\ref{aux2} below. 
\end{proof}

So we proceed to establish the spectral estimates of Lemma~\ref{aux} and Lemma~\ref{aux2} below.

Recall \eqref{Av}. By a rotation by an angle $\ang$ with respect to the $x_1$ axis (which leaves the half-space $\RRh$ invariant), it is easy to see that the Schr\"odinger operator $\mathcal L(\nu)$ with constant magnetic field on the half-space (defined in \eqref{eq-3D-op-2}) and the following self-adjoint operator
\begin{equation}\label{eq:Ltv}
\mathcal L_\ang(\nu)=(-i\nabla\qeb\mathbf{A}_{\nu,\ang})^2\quad{\rm in}\ L^2(\R_+^3)\,,
\end{equation}
with domain
\begin{multline*}
D(\mathcal L_\ang(\nu))
=\{u\in L^2(\R_+^3)~:~ (-i\nabla\qeb\mathbf{A}_{\nu,\ang})u\in L^2(\R_+^3,\mathbb{C}^3)\,,\\
(-i\nabla\qeb\mathbf{A}_{\nu,\ang})^2u\in L^2(\R_+^3)\,, \;\; \partial_{x_1}u=0\text{ on } \p\mathbb R^3_+ \}\,,
\end{multline*}
are unitarily equivalent. Indeed, the geometrical domain remains unchanged after the rotation so that the spectrum does not change either. 
Clearly, this rotation also maps the eigenfunctions of the operator $\mathcal{L}(\nu)$ to the eigenfunctions of the operator $\mathcal{L}_\ang(\nu)$.

Thus, denoting $\zeta_\ang(\nu)$ the bottom of the spectrum of the operator $\mathcal L_\ang(\nu)$, we have that $\zeta_\ang(\nu)=\zv$.

We consider $\phidd$ defined in \eqref{shape}, with $\fou \in \mathscr{C}^\infty_c(\mathbb{R})$ (for concreteness, let us consider the same $\fou \in \mathscr{C}^\infty_c(\mathbb{R})$ as the one specified in the proof of Lemma \ref{Sdecay}). 

\begin{nota}\label{nota:phiddt}
Related to the mapping mentioned above, we denote by $\phiddt$ the
eigenfunction of the operator $\mathcal{L}_\ang(\nu)$. This eigenfunction is obviously  associated with the eigenvalue $\zeta(\nu)$.
\end{nota}

We define with $g_\latti$ from \eqref{eq:autre2}, the following function on $\RRh$
\begin{equation}\label{psibisgen}
\psi(x)=\underset{\latti\in \LLat}{\sum}\phiddt(x+\latti)e^{ig_\latti(x)}=\underset{\nn,\mm\in \Z}{\sum}\phiddt(x+\nn s+\mm t)e^{ig_{\nn s+\mm t}(x)}.
\end{equation}

\begin{lem}\label{lem:existencebisCNper}
For the function $\psi$ introduced in \eqref{psibisgen}, which is defined on ${\mathbb R}^3_+$, the following properties hold.
\begin{enumerate}[label=(\arabic*)]
\item \label{P1} The functions $\psi, (-i\nabla\qeb\AAT)\psi, (-i\nabla\qeb\AAT)^2\psi$ belong to $L^2(\mathbb{R}_+\times \Lat_{R,R',\theta})$
and $ (-i\nabla\qeb\AAT)^2\psi = \zeta(\nu)\psi$.
\item \label{P2} The function $\psi$ satisfies the magnetic periodic condition \eqref{3.4bis} and the Neumann boundary condition $\frac{\partial\psi}{\partial x_1}=0$ on the set $\p{\mathbb R}^3_+$.
\end{enumerate}
\end{lem}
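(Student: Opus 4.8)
The plan is to verify directly that the series \eqref{psibisgen} converges nicely and inherits the relevant properties from $\phiddt$, exploiting that $\phiddt$ is a Schwartz function (Lemma~\ref{Sdecay} combined with the rotation described above) and that the phases $g_\latti$ are real-valued affine functions. Concretely, I would first establish \textbf{local absolute convergence and smoothness}: since $\phiddt\in\mathscr{S}(\overline{\RRh})$, for every $N$ there is $C_N$ with $|\phiddt(y)|\le C_N(1+|y|)^{-N}$, and similarly for all derivatives. On a fixed compact set $K\subset\overline{\RRh}$, the terms $\phiddt(x+\latti)e^{ig_\latti(x)}$ are bounded by $C_N(1+\dist(x+\latti,0))^{-N}$, and since the lattice $\LLat$ is discrete of rank $2$ in the $x_2x_3$-plane, $\sum_{\latti\in\LLat}(1+|x+\latti|)^{-N}<\infty$ for $N\ge 3$, uniformly for $x\in K$. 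The same estimate applied to derivatives (the derivatives of $e^{ig_\latti(x)}$ grow only polynomially in $\latti$ since $g_\latti$ is affine, which is absorbed by taking $N$ larger) shows the series converges in $C^\infty_{\loc}(\overline{\RRh})$, hence $\psi$ is smooth. For \ref{P1}, the $L^2(\mathbb{R}_+\times\Lat_{R,R',\theta})$ bound follows the same way: on the fundamental cell $\mathbb{R}_+\times\Lat_{R,R',\theta}$ the sum is dominated by a convergent series of $L^2$-norms of translates of a Schwartz function.

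Next I would prove the \textbf{eigenfunction property}. The key point is that the gauge phase is chosen precisely so that magnetic translation commutes with the operator: one has the operator identity $(-i\nabla+\AAT)\big(\phiddt(\cdot+\latti)e^{ig_\latti(\cdot)}\big) = e^{ig_\latti(x)}\big[(-i\nabla+\AAT)\phiddt\big](x+\latti)$, which is exactly the covariance relation \eqref{gradidbis0} applied termwise (it amounts to checking that $\nabla g_\latti(x) + \AAT(x) = \AAT(x+\latti)$ as vector fields, a direct computation from \eqref{Avgen} and \eqref{eq:autre2}). Iterating, $(-i\nabla+\AAT)^2\big(\phiddt(\cdot+\latti)e^{ig_\latti}\big) = e^{ig_\latti(x)}\big[(-i\nabla+\AAT)^2\phiddt\big](x+\latti) = \zeta(\nu)\,\phiddt(x+\latti)e^{ig_\latti(x)}$, using that $\phiddt$ is an eigenfunction of $\mathcal L_\ang(\nu)$ with eigenvalue $\zeta(\nu)$. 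Summing over $\latti$ and using the $C^\infty_{\loc}$-convergence to justify term-by-term application of the differential operator gives $(-i\nabla+\AAT)^2\psi=\zeta(\nu)\psi$. The membership $(-i\nabla+\AAT)\psi,(-i\nabla+\AAT)^2\psi\in L^2(\mathbb{R}_+\times\Lat_{R,R',\theta})$ follows from the same dominated-series argument applied to $(-i\nabla+\AAT)\phiddt$ and $(-i\nabla+\AAT)^2\phiddt$, which are again Schwartz.

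Then I would check \ref{P2}. The \textbf{Neumann condition} $\partial_{x_1}\psi=0$ on $\p\RRh$ is immediate: each $\phiddt$ satisfies it (it is an eigenfunction of $\mathcal L_\ang(\nu)$), each $g_\latti$ is independent of $x_1$ so $\partial_{x_1}\big(\phiddt(x+\latti)e^{ig_\latti(x)}\big) = (\partial_{x_1}\phiddt)(x+\latti)e^{ig_\latti(x)}$ vanishes at $x_1=0$, and the series of these derivatives converges locally uniformly. For the \textbf{magnetic periodicity} \eqref{3.4bis}: given $w\in\LLat$, write $\psi(x+w)=\sum_{\latti}\phiddt(x+w+\latti)e^{ig_\latti(x+w)}$; reindex $\latti' = \latti+w$ (a bijection of $\LLat$), and use the cocycle identity \eqref{eq:onlythisautre}, namely $g_{\latti'-w}(x+w) = g_{\latti'}(x) - g_w(x) \pmod{2\pi}$ — which is exactly the consistency relation guaranteed by the flux quantization \eqref{quantbis} — to obtain $\psi(x+w) = e^{-ig_w(x)}\sum_{\latti'}\phiddt(x+\latti')e^{ig_{\latti'}(x)} = e^{-ig_w(x)}\psi(x)$, which is \eqref{eq:per_gen}, hence \eqref{3.4bis}.

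\textbf{The main obstacle} I anticipate is not any single estimate but the bookkeeping around the phase factors: one must verify carefully that with the specific $g_\latti$ in \eqref{eq:autre2} (with $b=\sin\nu$) and the specific $\AAT$ in \eqref{Avgen}, the gauge-covariance identity $\nabla g_\latti + \AAT = \AAT(\cdot+\latti)$ holds exactly, and that the two-cocycle relation needed for periodicity reduces to $\eqref{quantbis}$. These are the computations the paper has partially set up in \eqref{eq:onlythisautre}–\eqref{gradidbis0}, so the work is to invoke them cleanly; the convergence part is routine given the Schwartz decay from Lemma~\ref{Sdecay}. A secondary subtlety is justifying that one may differentiate the series term by term — handled by the locally uniform convergence of the differentiated series, which is why establishing $C^\infty_{\loc}$-convergence at the outset is worthwhile.
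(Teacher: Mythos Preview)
Your proposal is correct and follows essentially the same route as the paper: Schwartz decay of $\phiddt$ (from Lemma~\ref{Sdecay}) to control the lattice sum, termwise gauge covariance to obtain the eigenfunction identity, and the cocycle relation for $g_\latti$ together with the flux quantization \eqref{quantbis} to verify the magnetic periodicity, with the Neumann condition inherited term by term. The only difference is one of presentation: you set up $C^\infty_{\loc}$-convergence explicitly to justify term-by-term differentiation, whereas the paper is terser and passes through the distributional identity; both are fine.
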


\begin{proof}

We prove \ref{P1}. We first show that $\psi \in L^2(\mathbb{R}_+\times \Lat_{R,R',\theta})$.
It suffices to show that
\begin{equation}\label{but}
\underset{(j,k)\in\mathbb{Z}^2}{\sum}a_{j,k}<+\infty,
\end{equation}
with $
a_{j,k}= \Vert \phiddt(x+\nn s+\mm t)
\Vert^2_{L^2(\mathbb{R}_+\times \Lat_{R,R',\theta})}$.

Using the last assertion of Lemma \ref{Sdecay}, there exists a positive constant $C>0$ such that for all $x\in\RRh$
\begin{equation}\label{csq:exp}
(1+x_1^2)(1+x_2^2)(1+x_3^2)\big|\phiddt(x_1,x_2,x_3)\big|\le C.
\end{equation}

Inequality \eqref{but} follows from \eqref{csq:exp}. We omit the technical details.

Now $(-i\nabla\qeb\AAT)^2\psi \in L^2(\mathbb{R}_+\times \Lat_{R,R',\theta})$ since $(-i\nabla\qeb\AAT)^2\psi=\zeta(\nu)\psi$ (in the sense of distributions). We can obtain that the function $(-i\nabla\qeb\AAT)\psi$ belongs to $L^2(\mathbb{R}_+\times \Lat_{R,R',\theta},\C^3)$ proceeding in the same way as for the function $\psi$. Indeed, thanks to the last assertion of Lemma \ref{Sdecay}, we have in particular that all the first derivatives of $\psi$ satisfy an analogous inequality as in \eqref{csq:exp}, which is the central argument. This finishes the proof of \ref{P1}.

We prove \ref{P2}. The Neumann boundary condition at $x_1=0$ is satisfied for each term in the sum and is therefore clear. We prove the magnetic periodicity in the form of \eqref{eq:per_gen}, by calculating
\begin{align}
e^{ig_{j_0 s+k_0t}(x)} &\psi(x+j_0 s+k_0t) \nonumber \\
&=
\underset{\nn,\mm\in \Z}{\sum}\phiddt(x+(\nn+j_0) s+(\mm +k_0) t)e^{ig_{\nn s+\mm t}(x+j_0 s+k_0t)}
e^{ig_{j_0 s+k_0t}(x)}.
\end{align}
We need to verify that
\begin{align}
e^{ig_{\nn s+\mm t}(x+j_0 s+k_0t)}
e^{ig_{j_0 s+k_0t}(x)} = e^{ig_{(\nn+j_0) s+(\mm +k_0) t}(x)}.
\end{align}
But it follows by direct calculation using the definition \eqref{eq:autre2} of $g_\latti$ that
\begin{multline}
e^{ig_{\nn s+\mm t}(x+j_0 s+k_0t)}
e^{ig_{j_0 s+k_0t}(x)} e^{-ig_{(\nn+j_0) s+(\mm +k_0) t}(x)} \\
\quad = e^{ib((kt)\wedge(j_0s) + (js) \wedge (k_0 t))} = 1,
\end{multline}
where the last equality uses the flux quantization \eqref{quantbis}.
This finishes the proof of \ref{P2}.
\end{proof}

\begin{lem}\label{aux}
We have the following estimate:
\begin{equation}\label{eq1}
\zeta_\ang(\nu,R,R',\theta)\le\zv.
\end{equation}
\end{lem}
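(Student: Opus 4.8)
The plan is to use the candidate function $\psi$ constructed in \eqref{psibisgen} as a trial state for the quadratic form $\QQvpT$ on the domain $\HCper$, and to show that its Rayleigh quotient equals $\zv$. By Lemma~\ref{lem:existencebisCNper}, parts \ref{P1} and \ref{P2}, we already know that $\psi$ lies in $\HCper$ (it is $L^2_{\rm comp}$, the magnetic gradient is $L^2_{\rm comp}$, and $\psi$ satisfies the magnetic periodic conditions \eqref{3.4bis}), so $\psi$ is an admissible test function; and moreover $(-i\nabla\qeb\AAT)^2\psi=\zv\psi$ pointwise (in the sense of distributions).

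The key step is the integration-by-parts identity. First I would observe that $\psi$ is not identically zero: indeed the summand corresponding to $\latti=0$ is $\phiddt$, which is a nonzero eigenfunction of $\mathcal L_\ang(\nu)$, and one checks that the series \eqref{psibisgen} cannot vanish identically (e.g.\ because $\phiddt$ decays and, restricting attention to a region where the translates are essentially disjoint, $\psi$ agrees with a single nonzero term up to exponentially small errors; alternatively, if $\psi \equiv 0$ then testing against a suitable localized function forces $\phiddt \equiv 0$). Hence $\|\psi\|_{L^2(\mathbb{R}_+\times \Lat_{R,R',\theta})}>0$. Then, using that $\psi$ and its magnetic gradient satisfy the twisted periodicity relations \eqref{eq:per_gen} and \eqref{gradidbis0}, together with the Neumann condition $\partial_{x_1}\psi=0$ on $\p\RRh$, the boundary terms arising from integrating $\QQvpT(\psi)=\int_{\R_+\times\Lat_{R,R',\theta}}|(-i\nabla\qeb\AAT)\psi|^2\,\mathrm{d}x$ by parts over the fundamental cell cancel pairwise (the contributions on opposite faces of the parallelogram cell cancel because the phase factors $e^{-ig_w}$ in $\psi$ and in $(-i\nabla\qeb\AAT)\psi$ are complex conjugates of each other, exactly as noted in the Remark following Lemma~\ref{lem:spectralPB}). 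This yields
\begin{equation*}
\QQvpT(\psi)=\int_{\R_+\times\Lat_{R,R',\theta}}\overline{\psi}\,(-i\nabla\qeb\AAT)^2\psi\,\mathrm{d}x
=\zv\int_{\R_+\times\Lat_{R,R',\theta}}|\psi|^2\,\mathrm{d}x.
\end{equation*}
Dividing by $\|\psi\|^2_{L^2(\mathbb{R}_+\times \Lat_{R,R',\theta})}>0$ and invoking the variational characterization of $\zeta_\ang(\nu,R,R',\theta)=\inf\spe(\LLvpTT)$ gives $\zeta_\ang(\nu,R,R',\theta)\le\zv$, which is \eqref{eq1}.

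The main obstacle I anticipate is the careful justification of the boundary-term cancellation and of the fact that $\psi$ is a genuine element of the form domain with enough regularity for the integration by parts to be legitimate; this is where the Schwartz-class decay of $\phiddt$ (coming from Lemma~\ref{Sdecay} and Proposition~\ref{prop:ptdecay}) and the absolute convergence estimate \eqref{but} do the real work, ensuring $\psi$ and all relevant derivatives are controlled and the cell-wise computation is valid. The non-vanishing of $\psi$ is a minor point but should be addressed explicitly. The reverse inequality $\zeta_\ang(\nu,R,R',\theta)\ge\zv$ is the content of the separate Lemma~\ref{aux2}, so it is not needed here.
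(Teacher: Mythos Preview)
Your approach is correct and essentially the same as the paper's: both use the function $\psi$ from \eqref{psibisgen}, together with Lemma~\ref{lem:existencebisCNper}, to exhibit $\zv$ as a point in the spectrum of $\LLvpTT$. The paper's proof is simply more terse---it states in one line that $\psi$ is an eigenfunction of $\LLvpTT$ with eigenvalue $\zv$ (so $\zv\in\spe(\LLvpTT)$ and hence \eqref{eq1} follows)---whereas you spell out the Rayleigh-quotient computation and flag the non-vanishing of $\psi$, a point the paper leaves implicit.
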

\begin{proof}

By Lemma \ref{lem:existencebisCNper}, $\psi$ is an eigenfunction of $\mathcal{L}_{\ang}^{\mathsf{per}}(\nu,\theta)$ with eigenvalue $\zeta(\nu)$.
\end{proof}

Recall the expression of the magnetic potential $\AAT$ defined in \eqref{Avgen}. Thanks to the translation invariance in the third variable $x_3$, and to the fact that the magnetic field $\mathbf{B}_{\nu,\ang}$ (see \eqref{Bvgen}) does not have any growth at infinity (it is constant), we can easily understand that the operator $\LLvpTT$ has essential spectrum. We will use Lemma \ref{aux} to prove the following result
\begin{lem}\label{eigenp}
The operator $\LLvpTT$ has an eigenvalue strictly less than the infimum of its essential spectrum.
\end{lem}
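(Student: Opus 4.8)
The plan is to compare the bottom of the spectrum of the periodic operator $\LLvpTT$ with the bottom of the essential spectrum of the same operator, and to show that the former is strictly smaller. By Lemma~\ref{aux} we already know $\zeta_\ang(\nu,R,R',\theta)\le\zv$, and in fact $\psi$ from \eqref{psibisgen} is an honest $L^2$-eigenfunction realizing the value $\zv$, so $\zv\in\spe(\LLvpTT)$ and $\zeta_\ang(\nu,R,R',\theta)=\zv$. Thus it suffices to prove the strict inequality
\begin{equation*}
\inf\spe_{\mathrm{ess}}(\LLvpTT)>\zv .
\end{equation*}

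\textbf{Key steps.} First I would set up a Persson-type characterization of $\inf\spe_{\mathrm{ess}}(\LLvpTT)$: since the cell $\Lat_{R,R',\theta}$ is bounded in the $(x_2,x_3)$ directions (with periodic-type conditions) and unbounded only in the transverse variable $x_1$, the essential spectrum is governed by the behavior of test functions supported in $\{x_1>T\}$ for large $T$. Concretely,
\begin{equation*}
\inf\spe_{\mathrm{ess}}(\LLvpTT)=\lim_{T\to\infty}\ \inf\Big\{\QQvpT(\psi)\ :\ \psi\in\HCper,\ \supp\psi\subset\{x_1>T\},\ \|\psi\|_{L^2}=1\Big\}.
\end{equation*}
Second, on the region $\{x_1>T\}$ the boundary $\p\RRh$ plays no role, so a function supported there and satisfying the magnetic periodic conditions \eqref{3.4bis} can be extended (by the periodicity) to a function on all of $\R^3$, and one compares $\QQvpT(\psi)$ with the quadratic form of the full-space operator $(-i\nabla+\AAT)^2$ on $\R^3$. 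By Proposition~\ref{prop:NoBoundary} (the rotation by $\ang$ about the $x_1$-axis does not change the spectrum of the full-space constant-field operator), that full-space operator has bottom of spectrum equal to $1$. Hence $\QQvpT(\psi)\ge (1-o(1))\|\psi\|_{L^2}^2$ for $\psi$ supported in $\{x_1>T\}$ with $T$ large, giving
\begin{equation*}
\inf\spe_{\mathrm{ess}}(\LLvpTT)\ge 1 .
\end{equation*}
Third, invoke Lemma~\ref{lem-p-z(nu)}: for $\nu\in(0,\tfrac\pi2)$ we have $\zv<1$. Combining, $\zv=\zeta_\ang(\nu,R,R',\theta)\le\inf\spe(\LLvpTT)\le\zv$ while $\inf\spe_{\mathrm{ess}}(\LLvpTT)\ge1>\zv$, so the bottom of the spectrum is an isolated point below the essential spectrum, i.e.\ an eigenvalue strictly below $\inf\spe_{\mathrm{ess}}(\LLvpTT)$. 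This is exactly the claim.

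\textbf{Main obstacle.} The routine part is the variational estimate; the delicate point is justifying the Persson-type formula and the localization-to-$\{x_1>T\}$ argument in the presence of the magnetic periodic boundary conditions \eqref{3.4bis} on the lateral faces of $\R_+\times\Lat_{R,R',\theta}$. One must check that cutting off in $x_1$ (with a cutoff depending only on $x_1$, hence commuting with the $x_2,x_3$-periodicity and not affecting the Neumann condition at $x_1=0$, which anyway is vacuous once the support is in $\{x_1>T\}$) produces only an $O(T^{-1})$-type error controlled by the decay of the relevant sequences, and that a Weyl sequence for a point of the essential spectrum can be taken with support escaping to $x_1\to\infty$ rather than merely ``escaping the compact part of the cell''. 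This uses that the cross-section $\Lat_{R,R',\theta}$ is compact, so the only way to ``escape to infinity'' in $\R_+\times\Lat_{R,R',\theta}$ is $x_1\to\infty$; making this precise (e.g.\ via a standard IMS localization argument) is the one place where a short but genuine argument is required. Once that is in place, everything else follows from Proposition~\ref{prop:NoBoundary} and Lemma~\ref{lem-p-z(nu)}.
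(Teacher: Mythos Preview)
Your outline is essentially the paper's own proof: Persson in the $x_1$-direction to reduce $\inf\spe_{\mathrm{ess}}(\LLvpTT)$ to an operator supported away from $\p\RRh$, comparison with the full-space constant-field Laplacian whose bottom is $1$, and then $\zv<1$ from Lemma~\ref{lem-p-z(nu)} together with Lemma~\ref{aux}. Two remarks.

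First, a small logical point: you assert $\zeta_\ang(\nu,R,R',\theta)=\zv$ at the outset. Lemma~\ref{lem:existencebisCNper} only gives that $\zv$ is \emph{an} eigenvalue of $\LLvpTT$, hence $\zeta_\ang(\nu,R,R',\theta)\le\zv$; the reverse inequality is Lemma~\ref{aux2}, which in the paper's logic comes \emph{after} the present lemma (it uses the exponential decay of Lemma~\ref{exp_decay_x1_p}, which in turn relies on Lemma~\ref{eigenp}). You do not actually need the equality here---the inequality $\zeta_\ang(\nu,R,R',\theta)\le\zv<1$ is enough---so just drop that claim.

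Second, and more importantly, you have the ``routine'' and ``delicate'' parts inverted. The Persson step (cutting in $x_1$ with a cutoff depending only on $x_1$) is indeed compatible with the magnetic-periodic conditions and is standard. The step that needs genuine work is your comparison with the $\R^3$ operator: a magnetically periodic function on $\R_+\times\Lat_{R,R',\theta}$, ``extended by periodicity'', is \emph{not} in $L^2(\R^3)$, so you cannot feed it directly into the $\R^3$ Rayleigh quotient and invoke Proposition~\ref{prop:NoBoundary}. The paper handles this by truncating in the $(x_2,x_3)$ variables over roughly $n$ periods, using \eqref{gradidbis0} to see that the energy and mass on each translated cell coincide, and then a surface-to-volume count (the number of boundary cells is $o$ of the number of interior cells) to show the cutoff error is negligible. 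That tangential-cutoff-and-cell-counting argument is the actual content of the proof; your sketch should flag it rather than the $x_1$-localization.
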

\begin{proof}{\it Step 1.} To prove that the operator $\LLvpTT$ has an eigenvalue below the threshold of its essential spectrum, it is enough to prove that
\begin{equation}\label{eq:goal}
\inf\spe_{\mathsf{ess}}\left(\LLvpTT\right) \ge 1.
\end{equation} Indeed, as we know that $\zeta(\nu)<1$ for all $\nu\in\left(0,\frac{\pi}{2}\right)$ (see Lemma \ref{lem-p-z(nu)}), Lemma \ref{aux} therefore implies that $\ZCper<1$ for all $\nu\in \left(0,\frac{\pi}{2}\right)$ (see Notation \ref{zetavRbeforeGEN}). Establishing \eqref{eq:goal} will give that the quantity $\ZCper$ belongs to the discrete spectrum of the operator $\LLvpTT $, giving in particular that it is an eigenvalue.

{\it Step 2}. We introduce the following self-adjoint operator $\mathcal{L}$, having formally the same expression as the operator $\LLvpTT$, but considered on the whole space $\mathbb{R}^3$. By rotational invariance of the operator $\mathcal L$, it is well known that its spectrum does not depend on $\nu$. This operator corresponds to the magnetic Laplacian with constant magnetic field and we know that (see Prop~\ref{prop:NoBoundary})
\begin{equation}\label{valueOne}\inf\spe(\mathcal{L})=\inf\spe_{\mathsf{ess}}(\mathcal{L})=1.
\end{equation}

We choose $L>0$ (to be chosen large at the end of the proof) and we denote by $\LLvpDLT$ the self-adjoint realization of the linear operator $\LLvpTT$ on $\HCper$, on the subset of functions vanishing on $\le x_1 \le L$.
Namely, the domain of $\LLvpDLT$ is the set
\begin{multline*}
\{\psi\in L^2_{\rm comp}((L,+\infty)\times\R^2)~:~ (-i\nabla+\AAT)\psi\in L^2_{\rm comp}((L,+\infty)\times\R^2,{\mathbb C}^3),\; \\ (-i\nabla+\AAT)^2\psi\in L^2_{\rm comp}((L,+\infty)\times\R^2), \psi\;\text{satisfies \eqref{3.4bis}},\;  \psi=0\;\;\text{on }\; x_1=L\}.
\end{multline*}
where we recall that $\HCper$ has been introduced in \eqref{eq:Hvt}.

Using Persson's Theorem (\cite{Per60}, see also \cite[Theorem B.1.1]{FH}), we have
\begin{equation}\label{step1a}
\inf\spe_{\mathsf{ess}}\left(\LLvpTT\right) =\underset{L\rightarrow+\infty}{\lim}\inf\spe\left(\LLvpDLT\right).
\end{equation}
By inclusion of the domains, we have
\begin{equation}\label{step1b}
\underset{L\rightarrow+\infty}{\lim}\inf\spe\left(\LLvpDLT\right)\ge\inf\spe\left(\LLvpDzT\right),
\end{equation}
where $\LLvpDzT$ is the operator on $\R^3_+$ with the same expression as $\LLvpTT$, and with Dirichlet boundary condition at $\p \R^3_+$. 

{\it Step 3.} Using \eqref{valueOne},
\eqref{step1a} and \eqref{step1b},
we need to prove that
\begin{equation}\label{eq:goal2}
\lambda_0 := \inf\spe\left(\LLvpDzT\right) \ge \inf\spe(\mathcal{L})=1.
\end{equation}

To prove \eqref{eq:goal2} we construct a sequence $(\psiN)$ of functions in the domain of the operator $\mathcal{L}$ such that
\begin{equation}\label{limz}
\underset{n\rightarrow+\infty}{\lim}\frac{\mathcal{Q}(\psiN)}{\Vert\psiN\Vert^2_{L^2(\mathbb{R}^3)}}= \lambda_0,
\end{equation}
where $\mathcal{Q}$ is the quadratic form associated with the operator $\mathcal{L}$. We know that there exists a minimizing sequence $(\psi_n)_{n\in\mathbb{N}^*}$ such that
\begin{equation}\label{minseq}
\underset{n\rightarrow+\infty}{\lim}\frac{\QQvpDzT(\psi_n)}{\Vert\psi_n\Vert_{L^2(\Cfon)}}=\lambda_0,
\end{equation}
where $\QQvpDzT$ is the quadratic form associated with the operator $\LLvpDzT$.
By density of the set of smooth functions with support bounded with respect to $x_1$, we can assume without loss of generality that $\psi_n$ is smooth and for each $n$ there exists $k$ such that
\begin{equation}\label{condsuppcomp2}
\supp (\psi_n) \subset \{x=(x_1,x_2,x_3)\in \R^3~:~ |x_1|\le k\}.
\end{equation}
We let $k(n)$ be the smallest such integer $k$.
For all $n\in\mathbb{N}^*$, we denote by $\tilde{\psi}_n$ the function $\psi_n$ extended by $0$ to the complement of $\RRh$ so that for all $n\in\mathbb{N}^*$, the function $\tilde{\psi}_n$ is defined on the whole space $\mathbb{R}^3$. 

We consider a sequence of cutoff functions $(\chi_n)_{n\in\mathbb{N}^*}\subset\mathscr{C}_c^{\infty}(\mathbb{R})$ satisfying
\begin{equation}\label{cutoff}
0\le \chi_n\le 1\;\; \text{on}\;\; \R, \quad \chi_n= 1 \;\; \text{on} \;\; [-n,n], \quad \chi_n= 0 \;\; \text{on} \;\; \mathbb{R}\backslash [-(n+1),n+1],
\end{equation}
and
\begin{equation}\label{cutoff2}
|\nabla\chi_n|\le C,
\end{equation}
for all $n\in\mathbb{N}^*$ and where $C>0$ is a fixed constant. Using these cutoff functions, we build the following sequence of real valued functions $\eta_n$ in $\mathscr{C}_c^{\infty}(\mathbb{R}^3)$ defined (for all $n\in\mathbb{N}^*$) on $\mathbb{R}^3$ as follows
\begin{equation}\label{eq:cutoff_eta}
\eta_{n}(x)=\chi_{k(n)}(x_1)\chi_n\left(\frac{x_2}{R}\right)\chi_{n}\left(\frac{x_3}{R'\sin\theta}\right).
\end{equation}
Notice that the cutoff function $\eta_{n}$ defined in \eqref{eq:cutoff_eta} depends on $\theta$, $R$, $R'$ and the function $k$ but we have chosen not to indicate this dependence to simplify the notations.

For all $n\in\mathbb{N}^*$, we define $\psiN$ as
\begin{equation*}
\psiN=\eta_n\tilde{\psi}_n.
\end{equation*}
Thanks to the cutoff, the functions $\eta_n\tilde{\psi}_n$ belong to the domain of the operator $\mathcal{L}$, for all $n\in\mathbb{N}^*$. What is more, thanks the choice of $k(n)$ we have
\begin{equation}\label{condsuppcomp3}
\eta_n(x)\tilde{\psi}_n(x)=\chi_n\left(\frac{x_2}{R}\right)\chi_n\left(\frac{x_3}{R'\sin\theta}\right)\tilde{\psi}_n(x).
\end{equation}

We introduce the following sets (for $l \in {\mathbb N}$)
\begin{equation*}
\Cfonbis=(0,+\infty)\times \DCper,
\end{equation*}
\begin{equation*}
\mathcal{C}_l=(0,+\infty)\times\{(x_2,x_3)\in\R^2~:~ |x_2|< lR,\;\; |x_3|< lR'\sin\theta\},
\end{equation*}
\begin{multline*}
\mathcal{K}_l=(0,+\infty) \times\{(x_2,x_3)\in \R^2~:~ lR< |x_2|< (l+1)R,\;\;  \text { or }\\
lR'\sin\theta< |x_3|< (l+1)R'\sin\theta\}.
\end{multline*}
Notice that these sets depend on $\theta$, $R$ and $R'$ but we have chosen not to encumber the notations.

Define
\begin{align*}
I_l&:= \{ w \in \LLat_{R,R',\theta}\,:\, \Cfonbis + w \subset \mathcal{C}_l\},\\
\tilde{I_l} &:= \{ w \in \LLat_{R,R',\theta}\,:\, (\Cfonbis + w )\cap  \mathcal{K}_l \neq \emptyset\}.
\end{align*}
We will use that 
\begin{align}\label{eq:surface}
|\tilde{I_l} |/|I_l| \rightarrow 0, \qquad \text{ as } \quad l\rightarrow \infty,
\end{align}
which easily follows by an area consideration.

We now calculate/estimate using the periodicity of $\tilde{\psi}_n$ and the definition of $\eta_n$
\begin{align}\label{eq:energy}
&\left| \int |(-i\nabla\qeb\mathbf{A}_{\nu,\ang})\psiN |^2\,dx - |I_n| \int_{\Cfonbis}  |(-i\nabla\qeb\mathbf{A}_{\nu,\ang}) \tilde{\psi}_n |^2\,dx \right| \nonumber \\
&\leq \int_{\RRh \setminus (\cup_{w \in I_n} \Cfonbis + w)} |(-i\nabla\qeb\mathbf{A}_{\nu,\ang})\psiN |^2\,dx\nonumber \\
&\leq \int_{\cup_{w \in \tilde{I}_n} (\Cfonbis + w)} |(-i\nabla\qeb\mathbf{A}_{\nu,\ang})\psiN |^2\,dx\nonumber \\
&\leq C |\tilde{I}_n| \int_{\Cfonbis}  |(-i\nabla\qeb\mathbf{A}_{\nu,\ang}) \tilde{\psi}_n |^2 + |\tilde{\psi}_n |^2\,dx.
\end{align}
Similarly,
\begin{align}\label{eq:mass}
\left| \int |\psiN |^2\,dx - |I_n| \int_{\Cfonbis}  |\tilde{\psi}_n |^2\,dx \right| 
&\leq |\tilde{I}_n| \int_{\Cfonbis}  | \tilde{\psi}_n |^2 \,dx.
\end{align}
From \eqref{eq:mass} we get using \eqref{eq:surface} that
\begin{align}\label{eq:mass2}
\left| \frac{ \int |\psiN |^2\,dx}{ |I_n| \int_{\Cfonbis}  |\tilde{\psi}_n |^2\,dx} -1 
\right| \rightarrow 0, \qquad \text{ as } n\rightarrow \infty.
\end{align}
Similarly, from \eqref{eq:energy} we find using \eqref{eq:surface} and \eqref{minseq} that
\begin{align}\label{eq:energy2}
\frac{\int |(-i\nabla\qeb\mathbf{A}_{\nu,\ang})\psiN |^2\,dx}{ |I_n| \int_{\Cfonbis}  |\tilde{\psi}_n |^2\,dx} \rightarrow \lambda_0.
\end{align}
Combining \eqref{eq:energy2} and \eqref{eq:mass2} we conclude \eqref{limz} which finishes the proof.
\end{proof}

Since we know by Lemma \ref{eigenp} that $\ZCper$ is an eigenvalue strictly below the essential spectrum we have the following classical result (see for example \cite[Theorem B.5.1]{FH}).

\begin{lem}\label{exp_decay_x1_p}
Let $\psivT$ be a normalized eigenfunction of the operator $\mathcal{L}_{\ang}^{\mathsf{per}}(\nu,\theta)$, associated with the lowest eigenvalue $\ZCper$. There exist constants $C, \alpha>0$ such that:
\begin{equation*}
\displaystyle{\int_{\mathbb{R}_+\times \Lat_{R,R',\theta}}e^{\alpha x_1}\left(\big|\psivT(x)\big|^2 + \big|(-i\nabla\qeb\mathbf{A}_{\nu,\ang})\psivT(x) \big|^2\right)\,\mathrm{d}x}<C.
\end{equation*}
\end{lem}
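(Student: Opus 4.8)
The plan is to invoke the standard Agmon-type exponential decay estimate for eigenfunctions corresponding to eigenvalues lying strictly below the essential spectrum. By Lemma~\ref{eigenp} we know that $\ZCper$ is an eigenvalue of $\LLvpTT$ and that it lies strictly below $\inf\spe_{\mathsf{ess}}(\LLvpTT)$, which by the proof of Lemma~\ref{eigenp} is $\geq 1 > \zv = \ZCper$. Set $\delta := \inf\spe_{\mathsf{ess}}(\LLvpTT) - \ZCper > 0$. The operator $\LLvpTT$ is a magnetic Schr\"odinger operator on the cylinder $\R_+ \times \Lat_{R,R',\theta}$ with magnetic Neumann condition at $x_1=0$ and magnetic periodic conditions \eqref{3.4bis} in the $(x_2,x_3)$-directions, so the underlying geometry is that of a manifold with boundary that is ``infinite'' only in the $x_1$-direction (the periodic cell having finite measure).

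First I would fix $\alpha \in (0, 2\sqrt{\delta})$ (the precise constant is immaterial) and introduce the real-valued, bounded, Lipschitz weight $\Phi_t(x) = \frac{\alpha}{2} \min(x_1, t)$ for $t>0$, which satisfies $|\nabla \Phi_t| \leq \alpha/2$ everywhere. The key algebraic identity is the magnetic IMS/Agmon localization formula: for $\psi = \psivT$ an eigenfunction with eigenvalue $\ZCper$,
\begin{equation*}
\mathrm{Re}\,\qqvpT\bigl(e^{2\Phi_t}\psi, \psi\bigr) = \bigl\| (-i\nabla\qeb\AAT)(e^{\Phi_t}\psi)\bigr\|^2 - \bigl\| (\nabla \Phi_t)\, e^{\Phi_t}\psi \bigr\|^2 .
\end{equation*}
Here one must be a little careful that $e^{2\Phi_t}\psi$ is an admissible test function in the form domain $\HCper$: this is true because $e^{2\Phi_t}$ is bounded, smooth away from $x_1 = t$, Lipschitz, and $\Lat_{R,R',\theta}$-periodic in $(x_2,x_3)$, so multiplication by it preserves both the magnetic periodicity conditions \eqref{3.4bis} and the $L^2_{\mathrm{comp}}$ membership, and the integration by parts defining $\qqvpT(e^{2\Phi_t}\psi,\psi)$ has no boundary contribution from $x_1=0$ thanks to the Neumann condition (the boundary terms cancel pairwise exactly as in the Remark following Lemma~\ref{lem:spectralPB}). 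Combining the identity with the eigenvalue equation $\qqvpT(\varphi,\psi) = \ZCper \langle \varphi,\psi\rangle$ applied to $\varphi = e^{2\Phi_t}\psi$, and then with the form-IMS bound $\|(-i\nabla\qeb\AAT)(e^{\Phi_t}\psi)\|^2 \geq \qqvpT(e^{\Phi_t}\psi,e^{\Phi_t}\psi) \geq$ (spectral bound on the energy of $e^{\Phi_t}\psi$ split into a region where $x_1$ is large, where one uses $\inf\spe_{\mathsf{ess}} \geq \ZCper + \delta$, and a compact region $x_1 \leq \rho$ which is controlled separately), one arrives at an inequality of the form
\begin{equation*}
\Bigl(\delta - \tfrac{\alpha^2}{4}\Bigr) \int_{\{x_1 > \rho\}} e^{2\Phi_t} |\psi|^2\,\mathrm{d}x \leq C_\rho \int_{\{x_1 \leq \rho\}} e^{2\Phi_t}|\psi|^2\,\mathrm{d}x \leq C_\rho e^{\alpha \rho}.
\end{equation*}
Since $\alpha^2/4 < \delta$, the left side has a positive coefficient; letting $t \to \infty$ by monotone convergence yields $\int e^{\alpha x_1}|\psi|^2 \leq C$ for some constant $C$ depending on $\rho, \alpha, \delta$ but not on $t$. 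Finally, to get the gradient term one runs the same computation keeping the full term $\|(-i\nabla\qeb\AAT)(e^{\Phi_t}\psi)\|^2$ on the left (rather than discarding it), which after absorbing the $|\nabla\Phi_t|^2$ error gives $\int e^{\alpha x_1}|(-i\nabla\qeb\AAT)\psi|^2 \lesssim \int e^{\alpha x_1}|\psi|^2 + (\text{compact contribution}) \leq C$; alternatively one simply cites the black-box statement \cite[Theorem B.5.1]{FH} as the excerpt suggests.

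The main obstacle is not conceptual but is the bookkeeping in the ``compact region'' splitting and, slightly more delicately, making sure the standard Agmon machinery (which is usually stated for operators on $\R^n$ or a fixed domain) transfers verbatim to the present setting with magnetic periodic boundary conditions in two of the three directions. This is genuinely harmless because the quotient $\R_+ \times \Lat_{R,R',\theta}$ behaves, for these purposes, like a half-cylinder with compact cross-section: Persson's theorem was already applied to it in the proof of Lemma~\ref{eigenp}, so the spectral input (eigenvalue strictly below essential spectrum, with the cross-sectional directions contributing nothing to non-compactness) is exactly the hypothesis needed. Hence the cleanest write-up simply observes that $\ZCper$ is a discrete eigenvalue below $\inf\spe_{\mathsf{ess}}(\LLvpTT)$ by Lemma~\ref{eigenp}, and then quotes \cite[Theorem B.5.1]{FH} — whose proof is the Agmon argument sketched above — to conclude, noting that the $x_1$-direction is the only direction of non-compactness so the weight $e^{\alpha x_1}$ is the correct one.
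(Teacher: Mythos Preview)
Your proposal is correct and matches the paper's approach: the paper does not give a proof at all but simply notes that $\ZCper$ is a discrete eigenvalue strictly below the essential spectrum by Lemma~\ref{eigenp} and cites \cite[Theorem B.5.1]{FH} for the Agmon decay. Your sketch of the Agmon argument is a faithful unpacking of that reference and is more detailed than what the paper provides.
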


\begin{lem}\label{aux2}
We have \begin{equation*}\label{eq:OtherBound}
\ZCper\ge \zv.
\end{equation*}
\end{lem}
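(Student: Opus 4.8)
The plan is to use a normalized ground state of the periodic operator $\mathcal{L}_{\ang}^{\mathsf{per}}(\nu,\theta)$, truncated in the $x_2$ and $x_3$ directions, as a trial state for the half-space operator $\mathcal{L}_\ang(\nu)$, whose spectrum starts at $\zeta_\ang(\nu)=\zv$; the truncation error will be controlled by the surface-to-volume estimate \eqref{eq:surface}, exactly as in the proof of Lemma~\ref{eigenp}.

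First I would fix a normalized eigenfunction $\psivT$ of $\mathcal{L}_{\ang}^{\mathsf{per}}(\nu,\theta)$ associated with the lowest eigenvalue $\ZCper$, which exists by Lemma~\ref{eigenp} and which, by Lemma~\ref{exp_decay_x1_p}, together with $\NAAT\psivT$ lies in $L^2(\R_+\times\DCper)$ with exponential decay in $x_1$. Viewing $\psivT$ as a function on all of $\RRh$ satisfying \eqref{3.4bis} and $\partial_{x_1}\psivT=0$ on $\p\RRh$, the magnetic periodicity gives $|\psivT(x+w)|=|\psivT(x)|$ and, via \eqref{gradidbis0}, $|(\NAAT\psivT)(x+w)|=|(\NAAT\psivT)(x)|$ for every $w\in\LLat_{R,R',\theta}$. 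Since the translates $\Cfonbis+w$, $w\in\LLat_{R,R',\theta}$, tile $\RRh$, this yields
\[
\int_{\Cfonbis+w}|\psivT|^2\,\mathrm{d}x=1,\qquad \int_{\Cfonbis+w}|\NAAT\psivT|^2\,\mathrm{d}x=\ZCper\qquad\text{for every }w\in\LLat_{R,R',\theta},
\]
the second identity because $\psivT$ is a normalized eigenfunction of $\mathcal{L}_{\ang}^{\mathsf{per}}(\nu,\theta)$.

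Next I would recycle the cut-off functions and the index sets from the proof of Lemma~\ref{eigenp}: put $\eta_n(x)=\chi_n(x_2/R)\,\chi_n\!\big(x_3/(R'\sin\theta)\big)$ with $\chi_n$ as in \eqref{cutoff}--\eqref{cutoff2}, so that $\eta_n\equiv1$ on $\bigcup_{w\in I_n}(\Cfonbis+w)$, $|\nabla\eta_n|\le C$, and both $\nabla\eta_n$ and $\eta_n\psivT$ are, cell by cell, supported within $\bigcup_{w\in\tilde I_n}(\Cfonbis+w)$. Since $\psivT$ decays in $x_1$, the function $\eta_n\psivT$ belongs to the form domain of $\mathcal{L}_\ang(\nu)$ and is nonzero for $n$ large (then $\eta_n\equiv1$ on the fundamental cell). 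By the min--max principle, since $\inf\spe(\mathcal{L}_\ang(\nu))=\zeta_\ang(\nu)=\zv$,
\[
\zv\ \le\ \frac{\displaystyle\int_{\RRh}\big|\NAAT(\eta_n\psivT)\big|^2\,\mathrm{d}x}{\displaystyle\int_{\RRh}|\eta_n\psivT|^2\,\mathrm{d}x}\qquad\text{for all large }n.
\]
Writing $\NAAT(\eta_n\psivT)=\eta_n\NAAT\psivT-i(\nabla\eta_n)\psivT$, using $|\eta_n|\le1$, $|\nabla\eta_n|\le C$, Cauchy--Schwarz on the cross term, and the vanishing of $\nabla\eta_n$ where $\eta_n=1$, the numerator is bounded by $\int\eta_n^2|\NAAT\psivT|^2\,\mathrm{d}x+C\int_{\bigcup_{w\in\tilde I_n}(\Cfonbis+w)}\big(|\psivT|^2+|\NAAT\psivT|^2\big)\,\mathrm{d}x\le |I_n|\,\ZCper+C(1+\ZCper)|\tilde I_n|$ by the periodicity identities above, while the denominator is at least $|I_n|$. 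Dividing, letting $n\to\infty$, and using $|\tilde I_n|/|I_n|\to0$ from \eqref{eq:surface}, I get $\zv\le\ZCper$; combined with Lemma~\ref{aux} this establishes the lemma.

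The step I expect to be the main (though mild) obstacle is the bookkeeping in the last paragraph: one must check that all localization errors are genuinely confined to the $O(|\tilde I_n|)$ "shell" cells and hence become negligible against the $|I_n|$ bulk cells, which is precisely what \eqref{eq:surface} provides. Everything else is routine, being essentially the thermodynamic-limit comparison already carried out in the proof of Lemma~\ref{eigenp}.
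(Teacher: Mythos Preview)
Your proposal is correct and follows exactly the paper's approach: truncate a periodic ground state with tangential cut-offs, use it as a trial state for the half-space operator $\mathcal{L}_\ang(\nu)$, and control the localization error by the same surface-to-volume argument \eqref{eq:surface} used in Lemma~\ref{eigenp}. (One small slip: you wrote that $\eta_n\psivT$ is supported in the shell cells $\bigcup_{w\in\tilde I_n}(\Cfonbis+w)$, but you clearly mean only $\nabla\eta_n$---your subsequent estimates are consistent with the latter and are fine.)
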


\begin{proof}
To prove \eqref{eq:OtherBound} it suffices to construct a sequence $(\psiN)$ of functions in the domain of the operator $\LLv$, such that
\begin{equation}\label{limzbis}
\underset{n\rightarrow+\infty}{\lim}\frac{\mathcal{Q}_\nu(\psiN)}{\Vert\psiN\Vert^2_{L^2(\mathbb{R}^3_+)}}= \zeta_\ang(\nu,R,R',\theta),
\end{equation}
where $\mathcal{Q}_\nu$ is the quadratic form associated with the operator $\mathcal{L}(\nu)$.

Let $\psivT \in \HCper$ be an eigenfunction of the operator $\mathcal{L}_{\ang}^{\mathsf{per}}(\nu,\theta)$, associated with the lowest eigenvalue $\ZCper$ (see Lemma \ref{eigenp}).
In particular,
\begin{align}
\frac{\QQvpT(\psivT)}
{\| \psivT \|_{L^2(\mathbb{R}_+\times \DCper)}^2} = \ZCper.
\end{align}
We consider a sequence $(\chi_n)_{n\in\mathbb{N}^*}$ of cutoff functions $(\chi_n)\subset\mathscr{C}_c^{\infty}(\mathbb{R})$ satisfying \eqref{cutoff} and \eqref{cutoff2}.
Using these cutoff functions, we define $\eta_n$ in $\mathscr{C}_c^{\infty}(\RRh)$ by
\begin{equation*}
\eta_n(x)=\chi_n\left(x_2\right)\chi_n\left(x_3\right).
\end{equation*}

Thanks to the cutoff in the variables $x_2$ and $x_3$, and the decay in the variable $x_1$, the functions $\psiN:=\eta_n\psivTt$ belong to the domain of the operator $\LLv$, for all $n\in\mathbb{N}^*$. 
One can now prove that $(\psiN)$ satisfies \eqref{limzbis} using the exact same arguments as in the proof of Lemma~\ref{eigenp}. We omit the details.
\end{proof}

\end{document}